\newcommand{\setword}[2]{%
  \phantomsection
  #1\def\@currentlabel{\unexpanded{#1}}\label{#2}%
}
\newtheorem{thm}{Theorem}[section]
\newtheorem{cor}[thm]{Corollary}
\newtheorem{lem}[thm]{Lemma}
\newtheorem{prop}[thm]{Proposition}
\numberwithin{equation}{section}
\theoremstyle{definition}
\newtheorem{definition}[thm]{Definition}
\newtheorem{rem}[thm]{Remark}
\begin{document}

\allowdisplaybreaks % Allow equations to break across pages
%%%%%%%%%%%%%%%%%%%%%%%%%%%%%%%%%%%%%%%%%%%%%
%%%%%%%%%%%%%%%%%%%%%%%%%%%%%%%%%%%%%%%%%%%%%
\def\dist{{\operatorname{dist}}}

\renewcommand{\d}{\:\! \mathrm{d}}

%%%%%%%%%%%%%%%%%%%%%%%%%%%%%%%%%%%%%%%%%%%%%

 \title[Infinitely many solutions for nonlinear superposition operators]{Superlinear problems involving nonlinear superposition operators of mixed fractional order}

\author[Souvik Bhowmick, Sekhar Ghosh, and Vishvesh Kumar]{Souvik Bhowmick, Sekhar Ghosh and Vishvesh Kumar}

\address[Souvik Bhowmick]{Department of Mathematics, National Institute of Technology Calicut, Kozhikode, Kerala, India - 673601}
\email{souvikbhowmick2912@gmail.com / souvik\_p230197ma@nitc.ac.in}

\address[ Sekhar Ghosh]{Department of Mathematics, National Institute of Technology Calicut, Kozhikode, Kerala, India - 673601}
\email{sekharghosh1234@gmail.com / sekharghosh@nitc.ac.in}
\address[Vishvesh Kumar]{Department of Mathematics: Analysis, Logic and Discrete Mathematics, Ghent University, Ghent, Belgium}
\email{vishveshmishra@gmail.com / vishvesh.kumar@ugent.be}
\date{}

\begin{abstract} In this work, we study a class of elliptic problems involving nonlinear superpositions of fractional operators of the form  
\[
A_{\mu,p}u := \int_{[0,1]} (-\Delta)_{p}^{s} u \, d\mu(s),
\]
where $\mu$ is a signed measure on $[0,1]$, coupled with nonlinearities of superlinear type. Our analysis covers a variety of superlinear growth assumptions, beginning with the classical Ambrosetti--Rabinowitz condition. Within this framework, we construct a suitable variational setting and apply the Fountain Theorem to establish the existence of infinitely many weak solutions. The results obtained are novel even in the special cases of superpositions of fractional $p$-Laplacians, or combinations of the fractional $p$-Laplacian with the $p$-Laplacian. More generally, our approach applies to finite sums of fractional $p$-Laplacians with different orders, as well as to operators in which fractional Laplacians appear with ``wrong'' signs.  A distinctive contribution of the paper lies in providing a unified variational framework that systematically accommodates this broad class of operators.

\end{abstract}

\keywords{Nonlocal Superposition Operators, Superlinear problems, Cerami condition, Fountain theorem.\\
\textit{2020 Mathematics Subject Classification: } 35M12, 35J60, 35R11, 35A15, 35S15, 35J60}

\maketitle

%\tableofcontents
%%%%%%%%%%%%%%%%%%%%%%%%%%%%%%%%%%%%%%%%%%%%%%%
\section{Introduction and main results}
The purpose of this article is to investigate nonlinear elliptic problems driven by nonlinear superposition operators of mixed fractional order. In recent years, considerable attention has been devoted to the existence theory for elliptic equations involving such operators. These superposition operators unify and generalize several fundamental cases, including the $p$-Laplacian, the fractional $p$-Laplacian, their combinations (i.e., mixed local and nonlocal operators), as well as their differences \cite{DPSV}. From a purely mathematical perspective, the interplay of these operators gives rise to a loss of scale invariance, which in turn may introduce unexpected difficulties. The situation becomes particularly delicate since these operators are no longer positive definite. Consequently, the bilinear form naturally associated with them does not yield a scalar product or a norm, the variational spectrum may contain negative eigenvalues, and even maximum principles can fail \cite{DPSV2025}.

At the same time, superposition operators of mixed fractional type are not only of theoretical interest but also appear in several applied contexts. They play an important role in the modeling of anomalous diffusion, population dynamics, and various problems in mathematical biology, including those related to Gaussian processes and L\'evy flights. For further discussions and related developments, we refer the reader to \cite{DPSV25, DPSV2025,DV21}.  
To be precise, we study the following superlinear problem involving the superposition nonlinear operator of mixed fractional order:
\begin{align}\label{p}
A_{\mu, p} u &= f(x,u) ~\text{in}~\Omega  \nonumber\\
 u & =0 ~\text{in}~ \mathbb{R}^N \setminus \Omega,
 \end{align}
where $\Omega$ is a bounded open subset of $\mathbb{R}^{N}$, $ 1<p<\frac{N}{s_\sharp}$ and $1<p<q<p^*_{s_\sharp}:=\frac{Np}{N-ps_\sharp}$ with $s_\sharp$ defined by \eqref{mu 4} below. The (nonlinear) superposition operator (of fractional order)  $A_{\mu, p}$ is defined as follows:
\begin{equation}\label{sup op}
    A_{\mu, p} u:=\int_{[0,1]}(-\Delta_p)^{s} u \d \mu(s),
\end{equation} 
where $\mu$ is a signed measure given by
\begin{equation} \label{mu}
    \mu:=\mu^{+}-\mu^{-}. 
\end{equation}
Consider two nonnegative finite Borel measures $\mu^{+}$ and $\mu^{-}$ on $[0,1]$, which are assumed to satisfy the following conditions:

\begin{equation}\label{mu 1}
    \mu^{+}([\bar{s}, 1])>0,
\end{equation}
\begin{equation}\label{mu 2}
    \mu^{-}|_{[\bar{s}, 1]}=0,
\end{equation}
and
\begin{equation}\label{mu 3}
\mu^{-}([0, \bar{s}]) \leq \kappa \mu^{+}([\bar{s}, 1]) ,
\end{equation}
for some $\bar{s} \in(0,1]$ and $\kappa \geq 0.$ In fact, from \eqref{mu 1}, there exists another (fractional) exponent $s_{\sharp} \in[\bar{s}, 1]$ such that 
\begin{equation}\label{mu 4}
\mu^{+}\left(\left[s_{\sharp}, 1\right]\right)>0. 
\end{equation}

The superposition operator defined in \eqref{sup op} exhibits remarkable versatility. It can represent the negative $p$-Laplacian $-\Delta_{p} u$ when $\mu$ is the Dirac measure concentrated at 1, the fractional $p$-Laplacian $(-\Delta)_p^{s}$ when $\mu$ is concentrated at a fractional power $s \in (0, 1)$, and even a “mixed-order” operator of the form $-\Delta_p + \epsilon (-\Delta)_p^{s}$, with $\epsilon \in (0,1]$, when $\mu$ is the sum of two Dirac measures, $\delta_1 + \epsilon \delta_s$, $s \in (0,1)$, among other possibilities.  Interestingly, some components of these operators may contribute with the “wrong sign”; however, as long as a dominant term, typically corresponding to a higher-order fractional operator, provides sufficient control, the overall behaviour remains well-posed.   

Problem \eqref{p} admits a natural variational structure, with its solutions corresponding to the critical points of the associated Euler–Lagrange functional. This naturally leads to the question of whether the well-established topological and variational methods, originally developed in the classical setting, can be effectively extended to this unified nonlinear and nonlocal framework. We now define the notion of a weak solution to the problem \eqref{p}.

\begin{definition}\label{weaksol MP}
    We say $u \in X_{p}(\Omega)$ is a weak solution to the problem \eqref{p}, if
\begin{align*}
& \int_{[0,1]}\left(\iint_{\mathbb{R}^{2 N}} \frac{C_{N, s, p}|u(x)-u(y)|^{p-2}(u(x)-u(y))(v(x)-v(y))}{|x-y|^{N+s p}} \d x \d y\right) \d \mu^{+}(s) \\
& -\int_{[0, \bar{s}]}\left(\iint_{\mathbb{R}^{2 N}} \frac{C_{N, s, p}|u(x)-u(y)|^{p-2}(u(x)-u(y))(v(x)-v(y))}{|x-y|^{N+s p}} \d x \d y\right) \d \mu^{-}(s) \\
& =\int_\Omega f(x,u)v \d x, \text{ for all } v \in X_{p}(\Omega).
\end{align*}
\end{definition}
The weak solutions to the problem \eqref{p} are the critical points of the energy functional $\mathcal{I}: X_{p}(\Omega) \rightarrow \mathbb{R}$, which is defined as
\begin{equation}\label{functional MP}
\mathcal{I}(u):=\frac{1}{p}\left[\rho_{p}(u)\right] ^{p}-\frac{1}{p} \int_{[0, \bar{s}]}[u]_{s, p}^{p} \d \mu ^{-}(s)-\int_\Omega\int_0^uf(x,\tau)\d \tau \d x.
\end{equation}
Furthermore, we have
\begin{align*}
\langle \mathcal{I}^{\prime}(u), v\rangle&= \int_{[0,1]}\left(\iint_{\mathbb{R}^{2 N}}  \frac{C_{N, s, p}|u(x)-u(y)|^{p-2}(u(x)-u(y))(v(x)-v(y))}{|x-y|^{N+s p}} \d x \d y\right) \d \mu^{+}(s) \\
& -\int_{[0, \bar{s}]}\left(\iint_{\mathbb{R}^{2 N}} \frac{C_{N, s, p}|u(x)-u(y)|^{p-2}(u(x)-u(y))(v(x)-v(y))}{|x-y|^{N+s p}} \d x \d y\right) \d \mu^{-}(s) \\
& -\int_{\Omega}f(x,u) v \d x, \text{ for all } u, v \in X_{p}(\Omega).
\end{align*}
\begin{rem}
    We mention here that first term of the integral on the LHS in Definition \ref{weaksol MP} has the following explicit form: 
\begin{align*} 
&\int_{[0,1]}\left(\iint_{\mathbb{R}^{2 N}} \frac{C_{N, s, p}|u(x)-u(y)|^{p-2}(u(x)-u(y))(v(x)-v(y))}{|x-y|^{N+s p}} \d x \d y\right) \d \mu^{+}(s)\\
&=\int_{(0,1)}\left(\iint_{\mathbb{R}^{2 N}} \frac{C_{N, s, p}|u(x)-u(y)|^{p-2}(u(x)-u(y))(v(x)-v(y))}{|x-y|^{N+s p}} \d x \d y\right) \d \mu^{+}(s)\\ 
&+\mu^+(\{0\}) \int_\Omega {|u(x)|^{p-2}}u(x)v(x) \d x + \mu^+(\{1\}) \int_\Omega |\nabla u(x)|^{p-2}\nabla u(x) \cdot \nabla v(x) \d x. \end{align*} 
Similarly, the second term of the integral on the LHS is defined but for simplicity, we will continue to misuse the expression as mentioned in Definition \ref{weaksol MP}.
\end{rem}

Finally, we assume that the nonlinearity $f:\bar{\Omega}\times\mathbb{R}\rightarrow\mathbb{R}$ satisfies the following conditions:
\begin{align}
 \text{(i)}&~~f\in C(\bar{\Omega}\times\mathbb{R})\label{f1},\\
\text{(ii)}&~~|f(x,t)|\leq a_1+a_2|t|^{q-1}, \text{ for some } a_1,a_2>0  \text{ with } p<q<p^*_{s_\sharp}:=\frac{Np}{N-ps_\sharp}\label{f2},\\
  \text{(iii)}&~~ f(x,-t)=-f(x,t),  \text{ for all } x\in\Omega, t\in\mathbb{R}\label{f3}.
\end{align}
Condition (ii) reflects the assumption of subcritical growth. On the other hand, Condition (iii) is essential for establishing the existence of infinitely many solutions, as it requires a certain symmetry of $f$. It is also immediate to observe that this condition implies $f(x,0)=0$, which ensures that $u=0$ is always a trivial solution of \eqref{p}. Our objective, therefore, is to establish the existence of nontrivial solutions to \eqref{p}.

Additionally, we assume that $f$ verifies the famous {\it Ambrosetti-Rabinowitz (AR)} condition; that is, there exist constants $\mu > p$ and $r > 0$ such that, for all $x \in \Omega$ and $t \in \mathbb{R}$ with $|t| \geq r$, we have

\begin{equation}\label{f4}
    0<\mu F(x,t)\leq tf(x,t), 
\end{equation}
  where $F$ is the primitive of $f$ with respect to the second variable, that means, 
  $$F(x, t)= \int_0^t f(x, \tau) \,d\tau.$$ 
  The Ambrosetti–Rabinowitz (AR) condition, introduced in the seminal work of Ambrosetti and Rabinowitz \cite{AR73}, plays a central role in guaranteeing the boundedness of Palais–Smale sequences associated with the energy functional of the problem under consideration. In that work, by applying the celebrated Mountain Pass Theorem, the authors proved the existence of nontrivial solutions to problem \eqref{p} in the case $A_{\mu, p} = -\Delta$, assuming that the nonlinearity satisfies superlinear growth while remaining subcritical.

  This result was later extended to the nonlocal framework of the fractional Laplacian, $A_{\mu, p} = (-\Delta)^s$, by Servadei and Vandinoci \cite{SV12}. Subsequently, Binlin et al. \cite{BMS2015} investigated the existence of infinitely many solutions to problem \eqref{p} for the case $A_{\mu, p} = (-\Delta)^s$. Furthermore, in \cite{MMTZ16}, the authors established the existence of a sequence of nontrivial solutions for Kirchhoff-type problems driven by the fractional $p$-Laplacian, which in particular includes problem \eqref{p} with $A_{\mu, p} = (-\Delta)_p^s$. In this paper, our goal is to generalize and unify the aforementioned results by studying problem \eqref{p} under the assumption that $f$ satisfies subcritical and superlinear growth conditions together with the Ambrosetti–Rabinowitz (AR) condition. As an example can take  $f(x, t):=a(x) |t|^{q-2}t$ with $a \in C(\overline{\Omega})$ and $q \in (p, p^*_{s_\sharp}).$ Our framework is sufficiently flexible to encompass both local and nonlocal operators, thereby extending classical variational results to a wider class of mixed and fractional problems. Now, we state one of our main results as follows. 
\begin{thm}\label{t1} Let $\Omega \subset \mathbb{R}^N$ be a bounded open subset. 
Let $\mu = \mu^{+} - \mu^{-}$, where $\mu^{+}$ and $\mu^{-}$ satisfy conditions \eqref{mu 1}, \eqref{mu 2} and \eqref{mu 3}.  
Assume that $s_{\sharp}$ is given as in \eqref{mu 4}, and suppose that $1<p < \frac{N}{s_\sharp}$ and $1<p < q < p^*_{s_\sharp} := \frac{Np}{N - ps_\sharp}$.  
Suppose further that $f$ satisfies \eqref{f1}, \eqref{f2}, \eqref{f3} and \eqref{f4}.  
Then, there exists $\kappa_* \geq 0$ such that, for all $\kappa \in [0, \kappa_*]$, the problem \eqref{p} admits an infinite sequence of large-energy solutions in $X_p(\Omega).$
\end{thm}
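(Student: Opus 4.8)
The plan is to deduce Theorem~\ref{t1} from the Fountain Theorem (in the version due to Bartsch), applied to the energy functional $\mathcal{I}$ in \eqref{functional MP}. That theorem requires an even $C^1$ functional satisfying a compactness condition (Palais--Smale or Cerami) together with two geometric conditions on a decomposition of the space into finite-dimensional pieces. First I would observe that $\mathcal{I}\in C^1(X_p(\Omega),\mathbb{R})$ thanks to the subcritical growth \eqref{f2}, and that $\mathcal{I}$ is \emph{even}: the oddness of $f$ in \eqref{f3} makes the primitive $F(x,\cdot)$ even, while the modular part is manifestly even.

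The next step is to verify the Cerami condition (C). This splits into two parts. For the boundedness of a Cerami sequence $(u_n)$, I would compute $\mathcal{I}(u_n)-\tfrac{1}{\mu}\langle \mathcal{I}'(u_n),u_n\rangle$ and invoke the AR condition \eqref{f4} to cancel the nonlinear terms, leaving a positive multiple $\left(\tfrac{1}{p}-\tfrac{1}{\mu}\right)$ of the principal part. The crux is that this principal part, $[\rho_{p}(u)]^{p}-\int_{[0,\bar{s}]}[u]_{s,p}^{p}\,\d\mu^{-}(s)$, controls $\|u\|^{p}$ from below; this coercivity is exactly where the smallness of $\kappa$ enters, since by \eqref{mu 3} the wrong-sign contribution of $\mu^{-}$ can be absorbed into the dominant top-order term only when $\kappa$ lies below a threshold $\kappa_*$. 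For the convergence of a bounded Cerami sequence, I would pass to a weakly convergent subsequence using reflexivity of $X_p(\Omega)$, use the compact embedding $X_p(\Omega)\hookrightarrow\hookrightarrow L^{q}(\Omega)$ (valid since $q<p^*_{s_\sharp}$) to handle the nonlinear term, and conclude by an $(S_+)$-type monotonicity argument for the principal operator.

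For the geometric conditions, I would fix a Schauder-type basis $\{e_j\}$ of the separable reflexive space $X_p(\Omega)$ and set $Y_k:=\bigoplus_{j=1}^{k}\mathbb{R}e_j$ and $Z_k:=\overline{\bigoplus_{j\ge k}\mathbb{R}e_j}$. Condition (A1), namely that $\mathcal{I}(u)\to-\infty$ on the finite-dimensional space $Y_k$ as $\|u\|\to\infty$, follows from the equivalence of norms on $Y_k$ combined with the superlinear lower bound $F(x,t)\ge c|t|^{\mu}-C$ with $\mu>p$ forced by \eqref{f4}. For condition (A2) I would introduce $\beta_k:=\sup\{\|u\|_{L^{q}}:u\in Z_k,\ \|u\|=1\}$, recall that $\beta_k\to 0$ as a standard consequence of the compact embedding, and then estimate, for $u\in Z_k$ with $\|u\|=r_k$, that $\mathcal{I}(u)\ge c\|u\|^{p}-C\beta_k^{q}\|u\|^{q}-C'$. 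Choosing $r_k$ as an appropriate power of $\beta_k^{-1}$ makes the infimum $b_k$ tend to $+\infty$, which yields an unbounded sequence of critical values and hence infinitely many large-energy solutions.

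The main obstacle, and the feature that distinguishes this setting from the classical one, is the loss of positivity and of scale invariance caused by the signed measure $\mu$. Because $A_{\mu,p}$ need not be positive definite, the principal part of $\mathcal{I}$ is not a norm, and the coercivity estimate $[\rho_{p}(u)]^{p}-\int_{[0,\bar{s}]}[u]_{s,p}^{p}\,\d\mu^{-}(s)\ge \alpha\,\|u\|^{p}$ --- which underpins both the boundedness half of (C) and the lower bound in (A2) --- is the delicate point. Establishing it requires dominating the lower-order, wrong-sign seminorms $[u]_{s,p}$ for $s\in[0,\bar{s}]$ by the top-order term associated with $s_\sharp$, using the fractional Sobolev inequality together with \eqref{mu 1}--\eqref{mu 3}; this is possible precisely when $\kappa\le\kappa_*$, which is where the threshold appearing in the statement originates.
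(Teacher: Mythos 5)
Your outline follows the same route as the paper: Fountain Theorem applied to the even functional $\mathcal{I}$, boundedness of the compactness sequence via $\mathcal{I}(u_n)-\tfrac{1}{\mu}\langle\mathcal{I}'(u_n),u_n\rangle$ and the AR condition \eqref{f4}, absorption of the wrong-sign part of the quadratic form through \eqref{mu 3} for $\kappa$ small (this is exactly Lemma \ref{reabsorb} and the equivalent norm $\eta_p$ of Lemma \ref{lmn2.8}), and the geometric conditions obtained from $F(x,t)\ge a_3|t|^{\mu}-a_4$ on the finite-dimensional pieces and from $\beta_k\to0$ (Lemma \ref{lmn4.1}) on their complements, with $b_k$ chosen as a negative power of $\beta_k$. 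You correctly locate the origin of the threshold $\kappa_*$. Two points of divergence are worth noting. First, you verify the Cerami condition while the paper verifies Palais--Smale for this theorem (Lemma \ref{lmn3.1}); under \eqref{f4} either works, so this is immaterial. Second, and more substantively, for the strong convergence of the bounded sequence you invoke an ``$(S_+)$-type monotonicity argument for the principal operator.'' This is the one step you should not treat as off the shelf: because of the signed measure, $A_{\mu,p}$ is not a monotone operator, and the standard proofs of the $(S_+)$ property for the fractional $p$-Laplacian do not transfer directly, since the negative part involves seminorms $[u]_{s,p}$ with $s\in[0,\bar s]$ that are not compact perturbations of the $X_p(\Omega)$-norm. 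The paper circumvents this by testing $\mathcal{I}'(u_n)$ against $u_n$ and against $u$ to deduce $\eta_p(u_n)\to\eta_p(u)$, and then concluding via the Brezis--Lieb type identity of Lemma \ref{B-L lemma} (applied separately to $\mu^{+}$ and $\mu^{-}$) together with the norm equivalence for small $\kappa$. A genuine $(S_+)$ argument could likely be salvaged by reabsorbing the negative part as in Lemma \ref{reabsorb}, but as stated this step is a gap you would need to fill; the rest of the proposal is sound and coincides with the paper's proof.
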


We recall that by an infinite sequence of large-energy ( or high-energy) solutions we mean a sequence $(u_j)_{j \in \mathbb{N}} \subset X_p(\Omega)$ of weak solutions to problem~\eqref{p} such that $\mathcal{I}(u_j) \to +\infty$ as $j \to \infty$. Moreover, under the symmetry assumption \eqref{f3}, if $u$ is a weak solution, then so is $-u$. Consequently, this result (and other results of this paper) guarantee the existence of infinitely many symmetric pairs of high-energy weak solutions $\{u_j, -u_j\}_{j \in \mathbb{N}}$.

To put things in context, let us briefly describe the literature for existence of solutions to elliptic problem involving $A_{\mu, p}.$ Consider the following critical  problem  
\begin{align}\label{problem 2}
A_{\mu,p} u &= \lambda |u|^{q-2}u+|u|^{p^*_{s_\sharp}-2}u ~\text{in}~\Omega  \nonumber\\
 u & =0 ~\text{in}~ \mathbb{R}^N \setminus \Omega,
 \end{align}
where $\Omega$ is a bounded open subset of $\mathbb{R}^{N}$, $0<\lambda$, $0\leq s\leq 1<p<\frac{N}{s_\sharp}.$ Dipierro \textit{et al.} \cite{DPSV} investigated the linear case of \eqref{problem 2}, corresponding to $q=p$, and proved the existence of $\kappa_0>0$ such that, for every $\kappa \in [0,\kappa_0]$, problem \eqref{problem 2} admits $m$ pairs of distinct nontrivial solutions. Subsequently, the last two authors, together with Aikyn and Ruzhansky \cite{AGKR2025}, studied the superlinear case $p<q<p^*_{s_\sharp}$ and established the existence of a positive constant $\lambda^*$ with the property that, for any $\lambda \geq \lambda^*$, there exists a sufficiently small $\kappa_0>0$ such that, for all $\kappa \in [0,\kappa_0]$, problem \eqref{problem 2} possesses at least one nontrivial solution. More recently, in \cite{BGK2025}, we analyzed the sublinear case $1<q<p<p^*_{s_\sharp}$ and demonstrated that problem \eqref{problem 2} admits infinitely many nontrivial solutions in $X_p(\Omega)$ with negative energy. In addition, we mention \cite{ABB2025}, where the authors considered the linear case $p=2$ and established existence and multiplicity results by replacing the critical exponent term with a general nonlinearity satisfying subcritical growth and asymptotic linearity at infinity. In this context, Theorem~\ref{t1} may be regarded as a natural continuation of this line of research.

The following easy consequence of Theorem \ref{t1} for the mixed local and nonlocal operator $( -\Delta+(-\Delta)^s)$ and mixed local and nonlocal $p$-Laplace operator $( -\Delta_{p} +(-\Delta_{p})^s)$ also seems to be new in the literature.  In this situation, we choose $\mu:=\delta_1+\delta_s,$ where $\delta_1$ and $\delta_s$ denote the Dirac measures centred at $1$ and $s \in (0, 1),$ respectively.  Consequently, $\mu$ satisfying all of the conditions \eqref{mu 1}-\eqref{mu 3} with $\bar{s}:=1$ and $\kappa:=0.$  We also take $s_\sharp:=1.$ The following corollary complements the findings of \cite{MMV23}, where the existence of a weak solution was established for problems driven by mixed local and nonlocal operators.

\begin{cor}
    Let $\Omega \subset \mathbb{R}^N$ be a bounded open subset.   
Suppose that $1<p < N$ and $1<p < q < p^* := \frac{Np}{N - p}$.  
Suppose further that $f$ satisfies \eqref{f1}, \eqref{f2}, \eqref{f3} and \eqref{f4}.  
Then,  the problem \begin{align}\label{pmixed}
( -\Delta_{p} +(-\Delta_{p})^s) u &= f(x,u) ~\text{in}~\Omega  \nonumber\\
 u & =0 ~\text{in}~ \mathbb{R}^N \setminus \Omega,
 \end{align} admits an infinite sequence of large-energy solutions.
\end{cor}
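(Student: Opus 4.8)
The plan is to obtain the corollary as a direct specialization of Theorem \ref{t1}, so that the entire task reduces to matching the mixed operator to the measure-theoretic data of \eqref{p}. First I would fix the signed measure $\mu := \delta_1 + \delta_s$ for the prescribed $s \in (0,1)$ and record its Jordan decomposition: since both Dirac masses are nonnegative, $\mu^+ = \delta_1 + \delta_s$ and $\mu^- = 0$. With this choice the defining integral in \eqref{sup op} collapses to a finite sum,
\[
A_{\mu,p} u = \int_{[0,1]} (-\Delta_p)^{s} u \d \mu(s) = (-\Delta_p)^{1} u + (-\Delta_p)^{s} u = -\Delta_p u + (-\Delta_p)^{s} u,
\]
where the endpoint contribution at $s = 1$ is read through the explicit form recorded in the Remark following Definition \ref{weaksol MP}, which supplies the local term $\mu^+(\{1\}) \int_\Omega |\nabla u|^{p-2}\nabla u \cdot \nabla v \d x$. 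This identifies $A_{\mu,p}$ with the mixed operator $-\Delta_p + (-\Delta_p)^{s}$ appearing in \eqref{pmixed}.

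Next I would verify the structural hypotheses \eqref{mu 1}--\eqref{mu 3} using the threshold $\bar{s} := 1$ and the constant $\kappa := 0$. Condition \eqref{mu 1} holds because $\mu^+([1,1]) = \mu^+(\{1\}) = 1 > 0$; condition \eqref{mu 2} is immediate since $\mu^- \equiv 0$; and condition \eqref{mu 3} reads $\mu^-([0,1]) = 0 \le 0 = \kappa\, \mu^+(\{1\})$, which is trivially satisfied. Taking $s_\sharp := 1 \in [\bar{s},1]$ gives $\mu^+([s_\sharp,1]) = 1 > 0$, so \eqref{mu 4} holds and the critical exponent becomes $p^*_{s_\sharp} = \frac{Np}{N - p} = p^*$. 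The exponent constraints of the corollary, $1 < p < N = \frac{N}{s_\sharp}$ and $1 < p < q < p^* = p^*_{s_\sharp}$, thus coincide verbatim with the hypotheses of Theorem \ref{t1}.

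Since $f$ is assumed to satisfy \eqref{f1}--\eqref{f4} directly, every hypothesis of Theorem \ref{t1} is now in force. The theorem yields a constant $\kappa_* \ge 0$ such that the conclusion holds for all $\kappa \in [0,\kappa_*]$; as our admissible constant is $\kappa = 0$, which belongs to $[0,\kappa_*]$ regardless of the value of $\kappa_*$, the existence of an infinite sequence of large-energy solutions to \eqref{pmixed} follows immediately.

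I do not anticipate a genuine obstacle, because $\mu^- = 0$ removes all ``wrong-sign'' contributions, placing us in the most favorable instance of the general framework; the content of the argument is essentially the bookkeeping that aligns \eqref{pmixed} with \eqref{p}. The only points demanding a little care are the interpretation of the endpoint exponent $s = 1$ as the classical $p$-Laplacian via the Remark, and the observation that the smallness requirement $\kappa \le \kappa_*$ is met automatically by $\kappa = 0$ without any quantitative control on $\kappa_*$.
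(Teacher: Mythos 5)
Your proposal is correct and follows exactly the paper's route: the corollary is obtained by specializing Theorem \ref{t1} to the measure $\mu=\delta_1+\delta_s$ with $\mu^-=0$, $\bar{s}=1$, $\kappa=0$, and $s_\sharp=1$, so that $p^*_{s_\sharp}=p^*$ and the choice $\kappa=0$ lies in $[0,\kappa_*]$ for any $\kappa_*\geq 0$. Your write-up is in fact more explicit than the paper's one-line justification, but the content is the same.
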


Although the Ambrosetti--Rabinowitz (AR) condition is widely regarded as natural, it is nevertheless restrictive, as it excludes a broad range of nonlinearities. From the AR condition \eqref{f4}, one can deduce the existence of constants $a_3, a_4 > 0$ such that  
\begin{equation}\label{f5}
    F(x,t) \geq a_3 |t|^\mu - a_4, \quad \text{for all } x \in \bar{\Omega}, \; t \in \mathbb{R}.
\end{equation}
However, a number of nonlinearities fail to satisfy the AR condition while still exhibiting superlinear growth at infinity. Indeed, combining \eqref{f5} with the assumption $\mu > p$ yields  
\begin{equation}\label{f6}
    \lim_{|t| \to \infty} \frac{F(x,t)}{|t|^p} = \infty, \quad \text{uniformly for } x \in \bar{\Omega}.
\end{equation}
 It is evident that condition \eqref{f6} is weaker than \eqref{f5}. For example, the function  
\begin{equation} \label{exnon}
    f(x,t) = |t|^{p-2} t \, \log\!\left(1 + |t|^{p-1}\right)
\end{equation}
satisfies the superlinear growth condition \eqref{f6}, yet it does not fulfill \eqref{f5}. This illustrates that nonlinearities governed by \eqref{f6} extend beyond those admissible under the Ambrosetti--Rabinowitz condition \eqref{f4}.  

This consideration provides a strong motivation to investigate the existence of solutions under weaker superlinearity assumptions, thereby enlarging the class of admissible nonlinearities while preserving the variational framework necessary for the analysis.  To address the superlinear problem \eqref{p} in this more general framework, we impose the following assumption on $f$ introduced by Jeanjean \cite{J99}:

There exists $\alpha\geq 1$ such that for any $x \in\Omega$, we have
\begin{equation}\label{f7}
    \mathcal{F}(x,t')\leq \alpha\mathcal{F}(x,t)~~\text{for any } t,t'\in\mathbb{R} ~~\text{with } 0<t'\leq t,
\end{equation}
where $$\mathcal{F}(x,t)=\frac{1}{p}tf(x,t)-F(x,t).$$ It is worth mentioning that problems whose nonlinearities satisfy condition \eqref{f7} are commonly referred to in the literature as {\it problems without the AR condition}. We note the function \eqref{exnon} also satisfies the condition \eqref{f7}. There are several results in the literature dealing with nonlinear nonlocal elliptic problems without the AR condition, we refer to \cite{BMS2015, FL09,J99,L2010,MS08} and references therein.

Our second result is about the existence of the large-energy solution to the problem \eqref{p} without the AR condition.
\begin{thm}\label{t2} 
Let $\Omega$ be a bounded open subset of $\mathbb{R}^{N}$.  
Let $\mu = \mu^{+} - \mu^{-}$, where $\mu^{+}$ and $\mu^{-}$ satisfy conditions \eqref{mu 1}--\eqref{mu 3}.  
Assume that $s_{\sharp}$ is given as in \eqref{mu 4}, and that $1<p < \tfrac{N}{s_\sharp}$ and $1<p < q < p^*_{s_\sharp}:= \frac{Np}{N - ps_\sharp}$.  
Suppose that $f$ satisfies \eqref{f1}, \eqref{f2}, \eqref{f3}, \eqref{f6}, and \eqref{f7}.  
Then there exists $\kappa_* \geq 0$ such that, for every $\kappa \in [0,\kappa_*]$, the problem \eqref{p} admits an infinite sequence of large-energy solutions in $X_p(\Omega)$.

\end{thm}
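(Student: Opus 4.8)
The plan is to apply the Fountain Theorem (in its Cerami-condition form) to the even $C^1$ functional $\mathcal{I}$ of \eqref{functional MP}, where $\|\cdot\|$ denotes the norm of $X_p(\Omega)$. Since $f$ is odd in its second argument by \eqref{f3}, its primitive $F$ is even, and together with the evenness of the operator part this gives $\mathcal{I}(-u)=\mathcal{I}(u)$ and $\mathcal{I}(0)=0$. Fixing a Schauder basis $(e_j)$ of the separable reflexive space $X_p(\Omega)$, I set $X_j=\mathbb{R}e_j$, $Y_k=\bigoplus_{j=1}^k X_j$ and $Z_k=\overline{\bigoplus_{j\ge k}X_j}$, and recall that the compact embedding $X_p(\Omega)\hookrightarrow L^q(\Omega)$ (valid since $q<p^*_{s_\sharp}$) forces $\beta_k:=\sup_{u\in Z_k,\,\|u\|=1}\|u\|_{L^q(\Omega)}\to 0$. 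Throughout, $\kappa_*$ is chosen (as in the variational setup) small enough that the form $A(u):=[\rho_p(u)]^p-\int_{[0,\bar s]}[u]_{s,p}^p\,\d\mu^-(s)$ is coercive, i.e.\ $A(u)\ge c_*[\rho_p(u)]^p$ for some $c_*>0$; this is precisely what keeps $\mathcal{I}$ well-behaved despite the wrong-sign contribution of $\mu^-$.

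The crux of the argument, and the place where \eqref{f6} and \eqref{f7} replace the Ambrosetti--Rabinowitz condition, is verifying that $\mathcal{I}$ satisfies the Cerami condition $(C)_c$ at every level $c$. Let $(u_n)$ be a Cerami sequence, so that $\mathcal{I}(u_n)\to c$ and $(1+\|u_n\|)\,\|\mathcal{I}'(u_n)\|\to 0$. The first task is boundedness. Using the $p$-homogeneity of $A$, one has $\mathcal{I}(u_n)-\tfrac1p\langle\mathcal{I}'(u_n),u_n\rangle=\int_\Omega\mathcal{F}(x,u_n)\,\d x$, whence $\int_\Omega\mathcal{F}(x,u_n)\,\d x\to c$ is bounded. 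Arguing by contradiction, suppose $\|u_n\|\to\infty$ and set $v_n=u_n/\|u_n\|$, so that, up to a subsequence, $v_n\rightharpoonup v$ in $X_p(\Omega)$, $v_n\to v$ in $L^q(\Omega)$ and a.e. If $v\ne 0$, then $|u_n|\to\infty$ a.e.\ on $\{v\ne0\}$; since $F$ is bounded below (by \eqref{f6} and continuity), Fatou's lemma applied to $F(x,u_n)/\|u_n\|^p=\bigl(F(x,u_n)/|u_n|^p\bigr)|v_n|^p$ together with \eqref{f6} gives $\|u_n\|^{-p}\int_\Omega F(x,u_n)\,\d x\to+\infty$, while $\|u_n\|^{-p}\mathcal{I}(u_n)\le\tfrac1p\|u_n\|^{-p}[\rho_p(u_n)]^p-\|u_n\|^{-p}\int_\Omega F(x,u_n)\,\d x\to-\infty$, contradicting $\mathcal{I}(u_n)/\|u_n\|^p\to 0$.

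The remaining case $v=0$ is the main obstacle and is handled by Jeanjean's device. For each $n$ pick $t_n\in[0,1]$ with $\mathcal{I}(t_n u_n)=\max_{t\in[0,1]}\mathcal{I}(t u_n)$. Given $M>0$, set $w_n=(pM/c_*)^{1/p}v_n$; since $v_n\to 0$ in $L^q$ and hence in $L^1$, the growth bound \eqref{f2} yields $\int_\Omega F(x,w_n)\,\d x\to 0$, so by coercivity $\mathcal{I}(w_n)\ge\tfrac{c_*}{p}\|w_n\|^p-o(1)=M-o(1)$. As $\|u_n\|\to\infty$, for large $n$ the scalar $(pM/c_*)^{1/p}/\|u_n\|$ lies in $(0,1)$ and realizes $w_n=t u_n$, so $\mathcal{I}(t_n u_n)\ge\mathcal{I}(w_n)\ge M-o(1)$; as $M$ is arbitrary, $\mathcal{I}(t_n u_n)\to+\infty$. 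On the other hand, since $\mathcal{I}(0)=0$ and $\mathcal{I}(u_n)$ stays bounded while $\mathcal{I}(t_n u_n)\to\infty$, the maximizing $t_n$ eventually lies in $(0,1)$, so $\langle\mathcal{I}'(t_n u_n),t_n u_n\rangle=0$ and therefore $\mathcal{I}(t_n u_n)=\int_\Omega\mathcal{F}(x,t_n u_n)\,\d x$. Now \eqref{f7}, combined with the evenness of $\mathcal{F}$ from \eqref{f3} (which also forces $\mathcal{F}\ge 0$: if $\mathcal{F}(x,t)<0$ for some $t>0$ then $\mathcal{F}(x,t')\le\alpha\mathcal{F}(x,t)<0$ for all $0<t'\le t$, and letting $t'\to0^+$ contradicts $\mathcal{F}(x,0)=0$), gives the pointwise bound $\mathcal{F}(x,t_n u_n)\le\alpha\,\mathcal{F}(x,u_n)$ since $|t_n u_n|\le|u_n|$. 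Integrating, $\mathcal{I}(t_n u_n)\le\alpha\int_\Omega\mathcal{F}(x,u_n)\,\d x\to\alpha c$, contradicting $\mathcal{I}(t_n u_n)\to+\infty$. Hence $(u_n)$ is bounded.

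Once boundedness is secured, I would extract $u_n\rightharpoonup u$, use the compact embedding and \eqref{f2} to get $\int_\Omega f(x,u_n)(u_n-u)\,\d x\to 0$, which with $\langle\mathcal{I}'(u_n),u_n-u\rangle\to 0$ yields $\langle A_{\mu,p}u_n,u_n-u\rangle\to 0$; invoking the $(S_+)$-type property of $A_{\mu,p}$ valid for $\kappa\le\kappa_*$ then upgrades this to strong convergence $u_n\to u$, establishing $(C)_c$. Finally I would verify the geometry. For the growth-from-below condition, on $Z_k$ with $\|u\|=r_k$ the estimate $\mathcal{I}(u)\ge\tfrac{c_*}{p}\|u\|^p-C\beta_k^q\|u\|^q-C\beta_k\|u\|$ and the choice $r_k\sim\beta_k^{-q/(q-p)}\to\infty$ give $\inf_{u\in Z_k,\,\|u\|=r_k}\mathcal{I}(u)\to+\infty$; for the cap condition, on the finite-dimensional $Y_k$ all norms are equivalent, so \eqref{f6} in the sharper form $F(x,t)\ge M|t|^p-C_M$ makes $\mathcal{I}$ nonpositive on a sufficiently large sphere $\|u\|=\rho_k>r_k$. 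With $\mathcal{I}$ even, $C^1$, satisfying $(C)_c$ for all $c$, and fulfilling both geometric hypotheses, the Fountain Theorem produces an unbounded sequence of critical values $c_k\to+\infty$, i.e.\ an infinite sequence of large-energy weak solutions of \eqref{p}, which completes the proof. I expect the $v=0$ case of the boundedness step to be the genuinely delicate point, as it is there that the interplay of \eqref{f6}, \eqref{f7}, and the coercivity afforded by $\kappa\le\kappa_*$ must conspire to compensate for the absence of the Ambrosetti--Rabinowitz condition.
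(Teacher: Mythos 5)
Your proposal is correct and follows essentially the same route as the paper: Fountain Theorem with the Cerami condition, boundedness of Cerami sequences split into the cases $v\neq 0$ (Fatou with \eqref{f6}) and $v=0$ (Jeanjean's maximizing-$t_n$ device with \eqref{f7}), and the same geometry ($\beta_k\to 0$ on $Z_k$ for the lower bound, equivalence of norms plus the sharpened form of \eqref{f6} on the finite-dimensional $Y_k$ for the upper bound). The only deviation is in the strong-convergence step, where you invoke an $(S_+)$-type property of $A_{\mu,p}$ that is not established in the paper (and is not entirely obvious given the wrong-sign part of $\mu$), whereas the paper reaches the same conclusion via the weak-convergence and Brezis--Lieb lemmas (Lemmas \ref{weak convergence} and \ref{B-L lemma}) together with convergence of the norms $\eta_p(u_n)\to\eta_p(u)$.
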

Observe that the following condition is stronger than condition \eqref{f6} but weaker than AR condition \eqref{f4},
\begin{equation}\label{f6'}
    \lim\limits_{|t|\rightarrow +\infty}\frac{f(x,t)}{|t|^{p-2}t}=+\infty,\text{   uniformly in  } x\in\bar{\Omega}.
    \end{equation}
    This is commonly referred to as the \emph{superlinear growth condition} of $f$ at infinity, particularly in the case $p=2$. Thus, we have the following result. 
\begin{cor}  Let $\Omega$ be a bounded open subset of $\mathbb{R}^{N}$.  
Consider the measure $\mu = \mu^{+} - \mu^{-}$, where $\mu^{+}$ and $\mu^{-}$ satisfy \eqref{mu 1}--\eqref{mu 3}.  
Let $s_{\sharp}$ be given by \eqref{mu 4}, and suppose that $1<p < \tfrac{N}{s_\sharp}$ and $1<p < q < p^*_{s_\sharp}$.  
Assume further that $f$ satisfties conditions \eqref{f1}, \eqref{f2}, \eqref{f3}, \eqref{f7}, and \eqref{f6'}.  
Then there exists a constant $\kappa_* \geq 0$ such that, for all $\kappa \in [0,\kappa_*]$, the problem \eqref{p} has an infinite sequence of large-energy solutions in $X_p(\Omega)$.

\end{cor}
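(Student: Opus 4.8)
The plan is to deduce the corollary directly from Theorem~\ref{t2}. Since the hypotheses \eqref{f1}, \eqref{f2}, \eqref{f3}, and \eqref{f7} appear verbatim in both statements, the only point requiring verification is that the pointwise superlinear growth condition \eqref{f6'} implies the averaged growth condition \eqref{f6}. Once this single implication is established, all hypotheses of Theorem~\ref{t2} are in force, and the conclusion (the existence of $\kappa_* \geq 0$ and, for every $\kappa \in [0,\kappa_*]$, an infinite sequence of large-energy solutions in $X_p(\Omega)$) follows immediately.

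To prove that \eqref{f6'} implies \eqref{f6}, I would fix an arbitrary $M>0$. By \eqref{f6'}, there is $R>0$, independent of $x$ thanks to the uniformity in the definition, such that for every $x \in \bar{\Omega}$ one has $f(x,t) \geq M\,t^{p-1}$ whenever $t \geq R$, and $f(x,t) \leq -M\,|t|^{p-1}$ whenever $t \leq -R$ (the two regimes arise because $|t|^{p-2}t$ is positive for $t>0$ and negative for $t<0$). I would then integrate these inequalities. Writing $F(x,t)=\int_0^t f(x,\tau)\,\d\tau$ and splitting the integral at $\pm R$, the contribution over $[-R,R]$ is uniformly bounded, since $f$ is bounded on the compact set $\bar{\Omega}\times[-R,R]$ by continuity \eqref{f1}, while the tail contributes at least $\tfrac{M}{p}\bigl(|t|^p - R^p\bigr)$.

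Combining these estimates yields, for all $|t|\geq R$ and all $x\in\bar{\Omega}$, a bound of the form $F(x,t) \geq \tfrac{M}{p}|t|^p - C$, where $C=C(M,R)$ is independent of $x$. Dividing by $|t|^p$ and letting $|t|\to\infty$ gives $\liminf_{|t|\to\infty} F(x,t)/|t|^p \geq M/p$, uniformly in $x$. As $M>0$ was arbitrary, this is exactly \eqref{f6}, which completes the reduction to Theorem~\ref{t2}.

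I do not expect a genuine obstacle here, since this is an elementary integration argument; the only points demanding care are the correct handling of the two sign regimes $t\to+\infty$ and $t\to-\infty$ when passing from $f$ to its primitive $F$, and ensuring that both the threshold $R$ and the remainder constant $C$ can be chosen independently of $x$. The latter is guaranteed by the uniformity built into \eqref{f6'} together with the compactness of $\bar{\Omega}\times[-R,R]$ and the continuity \eqref{f1}.
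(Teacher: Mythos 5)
Your proposal is correct and takes essentially the same route as the paper: the corollary is deduced from Theorem~\ref{t2} by noting that \eqref{f6'} implies \eqref{f6}, an implication the paper merely asserts (``the following condition is stronger than \eqref{f6}'') and which you verify by the standard integration argument. Your treatment of the two sign regimes when passing from $f$ to $F$ and of the uniformity in $x$ is accurate.
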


Motivated by \cite{LL2003}, we now introduce the following global condition on $f$: the function 
\begin{equation}\label{f8}
    g(t)=\frac{f(x,t)}{|t|^{p-2}t}~~\text{is increasing in }t\geq 0 ~~\text{and is decreasing in }t\leq0.  
\end{equation}
Observe that condition \eqref{f7} is weaker than \eqref{f8}. Since, we are dealing with the problem \eqref{p} in a bounded domain, we replace the global condition \eqref{f8} by the following modified condition: there exists $t''>0$ such that for any $x \in\Omega$, the function 
\begin{equation}\label{f9}
    g(t)=\frac{f(x,t)}{|t|^{p-2}t}~~\text{is increasing in }t\geq t'' ~~\text{and is decreasing in }t\leq -t''.  
\end{equation}
The condition \eqref{f9} was introduced in \cite{L2010}.
The following theorem is our last result of this paper.
\begin{thm}\label{t3} 
Let $\Omega$ be a bounded open subset of $\mathbb{R}^{N}$.  
Let $\mu = \mu^{+} - \mu^{-}$, with $\mu^{+}$ and $\mu^{-}$ satisfying conditions \eqref{mu 1}--\eqref{mu 3}.  
Assume that $s_{\sharp}$ is given as in \eqref{mu 4}, and that $1<p < \frac{N}{s_\sharp}$ and $1<p < q < p^*_{s_\sharp}$.  
Suppose $f$ satisfies \eqref{f1}, \eqref{f2}, \eqref{f3}, \eqref{f6}, and \eqref{f9}.  
Then, there exists $\kappa_* \geq 0$ such that, for all $\kappa \in [0, \kappa_*]$, the problem \eqref{p} possesses an infinite sequence of large-energy solutions in $X_p(\Omega)$.
\end{thm}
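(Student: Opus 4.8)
The plan is to apply the Fountain Theorem, in the version that only requires the Cerami compactness condition, to the functional $\mathcal{I}$ defined in \eqref{functional MP}. Since $f$ is odd in its second variable by \eqref{f3}, the functional $\mathcal{I}$ is even. I would fix a decomposition $X_p(\Omega) = \overline{\bigoplus_{j\ge 0} X_j}$ associated with a fundamental, linearly independent system of the separable reflexive space $X_p(\Omega)$, and set $Y_k = \bigoplus_{j=0}^k X_j$ and $Z_k = \overline{\bigoplus_{j\ge k} X_j}$. The two geometric hypotheses of the Fountain Theorem involve only \eqref{f1}, \eqref{f2}, \eqref{f3} and \eqref{f6}, all of which are assumed in Theorem \ref{t3}; their verification is therefore identical to the one already carried out for Theorems \ref{t1} and \ref{t2}, and I would only recall the mechanism. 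For the condition on $Z_k$ I use the subcritical bound $F(x,t) \le a_1|t| + \tfrac{a_2}{q}|t|^q$ coming from \eqref{f2}, together with $\beta_k := \sup\{\|u\|_{L^q(\Omega)} : u\in Z_k,\ \|u\|=1\}\to 0$ (a consequence of the compactness of $X_p(\Omega)\hookrightarrow L^q(\Omega)$), and choose inner radii $r_k \sim \beta_k^{q/(p-q)}\to\infty$ so that $\inf_{u\in Z_k,\ \|u\|=r_k}\mathcal{I}(u)\to+\infty$; here it is essential that, for $\kappa\le\kappa_*$, the form $\Phi(u) := \int_{[0,1]}[u]_{s,p}^p\,\d\mu^+(s) - \int_{[0,\bar s]}[u]_{s,p}^p\,\d\mu^-(s)$ be comparable to $\|u\|^p$, which is exactly where the threshold $\kappa_*$ and condition \eqref{mu 3} enter. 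For the condition on the finite-dimensional $Y_k$ I invoke \eqref{f6} to obtain, for every $C>0$, a bound $F(x,t)\ge C|t|^p - C'$; since all norms on $Y_k$ are equivalent, this forces $\mathcal{I}(u)\to -\infty$ as $\|u\|\to\infty$ on $Y_k$, so that $\mathcal{I}\le 0$ on a sphere of some radius $\rho_k>r_k$.

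The only place where the Ambrosetti--Rabinowitz condition \eqref{f4} of Theorem \ref{t1} (respectively \eqref{f7} of Theorem \ref{t2}) is needed is the Cerami condition, and this is where \eqref{f9} takes over; it is the step I expect to be the main obstacle. The key consequence of \eqref{f9} concerns $\mathcal{F}(x,t) = \tfrac1p t f(x,t) - F(x,t)$: writing $f(x,t)=g(t)|t|^{p-2}t$, a direct computation gives
\begin{equation*}
\partial_t \mathcal{F}(x,t) = \tfrac1p\, g'(t)\,|t|^{p}, \qquad t>0,
\end{equation*}
so \eqref{f9} makes $t\mapsto \mathcal{F}(x,t)$ nondecreasing for $t\ge t''$, and by \eqref{f3} it is even in $t$. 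Setting $C_0 := \max_{x\in\bar\Omega,\ |s|\le t''}|\mathcal{F}(x,s)|$, this yields the comparison
\begin{equation*}
\mathcal{F}(x,t') \le \mathcal{F}(x,t) + C_0 \qquad \text{whenever } |t'|\le |t|,
\end{equation*}
which is the additive-constant analogue of \eqref{f7} and plays the same role in the argument.

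Given a Cerami sequence $(u_n)$, that is $\mathcal{I}(u_n)\to c$ and $(1+\|u_n\|)\,\|\mathcal{I}'(u_n)\|\to 0$, I would first prove boundedness by contradiction. Suppose $\|u_n\|\to\infty$ and set $v_n = u_n/\|u_n\|$; up to a subsequence $v_n\rightharpoonup v$ in $X_p(\Omega)$, $v_n\to v$ in $L^q(\Omega)$ and a.e. If $v\neq 0$, then $|u_n|\to\infty$ on $\{v\neq 0\}$, and \eqref{f6} with Fatou's lemma (using that $F$ is bounded below) gives $\int_\Omega F(x,u_n)\,\d x/\|u_n\|^p\to+\infty$, while $\Phi(u_n)/\|u_n\|^p=\Phi(v_n)$ stays bounded; since $\mathcal{I}(u_n)/\|u_n\|^p\to 0$, dividing the identity $\mathcal{I}(u_n)=\tfrac1p\Phi(u_n)-\int_\Omega F(x,u_n)\,\d x$ by $\|u_n\|^p$ forces a contradiction. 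If $v=0$, I use the maximization device of Jeanjean and Liu: for fixed $M>0$, the functions $w_n=(pM)^{1/p}v_n$ satisfy $w_n\to 0$ in $L^q(\Omega)$, whence $\int_\Omega F(x,w_n)\,\d x\to 0$ and $\mathcal{I}(w_n)=M\,\Phi(v_n)+o(1)\ge cM+o(1)$ with $c>0$ coming from the comparability of $\Phi$ with $\|\cdot\|^p$. Since $w_n = t u_n$ with $t=(pM)^{1/p}/\|u_n\|\in[0,1]$ for large $n$, choosing $t_n\in[0,1]$ to maximize $t\mapsto\mathcal{I}(tu_n)$ gives $\mathcal{I}(t_nu_n)\to+\infty$; as $\mathcal{I}(0)=0$ and $\mathcal{I}(u_n)\to c$, the maximizer $t_n$ is interior, so $\langle\mathcal{I}'(t_nu_n),t_nu_n\rangle=0$. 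The identity $\mathcal{I}(u)-\tfrac1p\langle\mathcal{I}'(u),u\rangle = \int_\Omega\mathcal{F}(x,u)\,\d x$ then yields $\int_\Omega\mathcal{F}(x,t_nu_n)\,\d x=\mathcal{I}(t_nu_n)\to+\infty$, whereas the comparison inequality (with $|t_nu_n|\le|u_n|$) bounds the left-hand side by $\int_\Omega\mathcal{F}(x,u_n)\,\d x + C_0|\Omega| = \mathcal{I}(u_n)-\tfrac1p\langle\mathcal{I}'(u_n),u_n\rangle + C_0|\Omega|$, which is bounded — a contradiction.

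Finally, once $(u_n)$ is bounded I pass to a weakly convergent subsequence $u_n\rightharpoonup u$ and upgrade to strong convergence in $X_p(\Omega)$ using the $(S_+)$-type property of the operator $A_{\mu,p}$, namely the strong monotonicity of the $\mu^+$-part, which for $\kappa\le\kappa_*$ dominates the $\mu^-$-contribution; the compactness of $X_p(\Omega)\hookrightarrow L^q(\Omega)$ handles the term involving $f$. This establishes the Cerami condition, and together with the two geometric conditions above the Fountain Theorem produces an unbounded sequence of critical values of $\mathcal{I}$, that is, an infinite sequence of large-energy weak solutions of \eqref{p} in $X_p(\Omega)$. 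I expect the case $v=0$ in the boundedness step to be the crux, since it is precisely there that the loss of the AR condition is felt and the monotonicity extracted from \eqref{f9} becomes indispensable.
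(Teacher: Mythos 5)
Your proposal is correct and follows essentially the same route as the paper: Fountain Theorem under the Cerami condition, geometric conditions recycled from the proofs of Theorems \ref{t1} and \ref{t2}, and boundedness of Cerami sequences via the Jeanjean--Liu maximization device with the comparison $\mathcal{F}(x,t')\leq\mathcal{F}(x,t)+C_0$ extracted from \eqref{f9}. The only caveat is that your derivation of the monotonicity of $\mathcal{F}$ via $\partial_t\mathcal{F}=\tfrac1p g'(t)|t|^p$ presumes $g$ differentiable, whereas $f$ is only continuous; the conclusion nevertheless holds for merely continuous $f$ and is exactly Lemma \ref{lmn3.4} (Lemma 2.3 of \cite{L2010}), which is what the paper invokes.
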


The strategy employed to establish results above (Theorems \ref{t1}, \ref{t2}, and \ref{t3}) is based on the search for infinitely many critical points of the Euler–Lagrange functional \eqref{functional MP} associated with problem \eqref{p}. For this purpose, we make use of the Fountain Theorem due to Bartsch \cite{B1993} discussed in Section \ref{s4}. As is customary in the application of critical point theorems, the analysis requires a careful investigation of both the compactness properties and the geometric features of the functional. Concerning compactness, we show that the functional satisfies the classical Palais–Smale  condition when the nonlinearity verifies the Ambrosetti–Rabinowitz assumption \eqref{f4}. Recall that  the functional $\mathcal{I}$ given by \eqref{functional MP} satisfies the Palais--Smale condition $\mathrm{(PC)_c}$ at the level $c\in\mathbb{R}$ if every sequence $\{u_j\}_{j\in\mathbb{N}}\subset X_p(\Omega)$ such that
\begin{equation}\label{PS-seq}
\mathcal{I}(u_j)\to c
\quad\text{and}\quad
\mathcal{I}'(u_j) \to 0 \,\, \text{in}\,\,X_p(\Omega)^*
\quad\text{as } j\to+\infty,
\end{equation}
admits a subsequence which converges strongly in $X_p(\Omega)$.

On the other hand, when the right-hand side satisfies alternative superlinear growth conditions (that is, \eqref{f6}, \eqref{f7}, \eqref{f6'}, \eqref{f9}), compactness is ensured through the Cerami condition.  The functional $\mathcal{I}$ satisfies the Cerami condition $\mathrm{(Ce)_c}$ at the level $c\in\mathbb{R}$ if every sequence $\{u_j\}_{j\in\mathbb{N}}\subset X_p(\Omega)$ such that
\begin{equation}\label{C-seq}
\mathcal{I}(u_j)\to c
\quad\text{and}\quad
\big(1+\|u_j\|_{X_p(\Omega)}\big)
 \mathcal{I}'(u_j)\to 0\,\, \text{in}\,\,X_p(\Omega)^*
\quad\text{as } j\to+\infty,
\end{equation}
admits a subsequence which converges strongly in $X_p(\Omega)$.
The Cerami \cite{C1978} compactness condition was introduced by Cerami  as a weaker alternative to the Palais--Smale condition. The Fountain Theorem remains applicable under the Cerami hypothesis. 
In both settings, the main technical difficulty lies in proving the boundedness of the Palais–Smale or Cerami sequences and in handling the superposition operator due to its general nature, a step that is crucial for applying variational methods effectively.

%In analyzing the geometric framework required for the application of the Fountain Theorem, we demonstrate that the energy functional exhibits contrasting behaviors on distinct subspaces of $X_p(\Omega)$ (see Section\ref{s4}). More precisely, it attains negative values on balls of sufficiently large radius contained in finite-dimensional subspaces, whereas it remains positive on balls of infinite-dimensional subspaces. The proof of the negativity property relies essentially on the equivalence of norms in finite-dimensional settings, which allows us to control the contribution of the nonlinear term. On the other hand, establishing positivity in infinite-dimensional subspaces demands a more refined argument, closely tied to the superlinear growth assumptions on the nonlinearity $f$. 

\subsection*{Applications and Examples}
Now, we apply the abstract results established in the previous sections to a number of illustrative examples, thereby deriving new existence results that depend on the specific choice of the measure $\mu$. As a first step, we present particular instances of Theorem~\ref{t3}. Analogous applications of Theorems~\ref{t1} and \ref{t2} can be discussed in the same framework. 
For convenience, we shall employ the unified notation $\mathbb{X}(\Omega)$ to denote the (fractional) Sobolev space associated with each example under consideration. It is important to note that the precise definition of $\mathbb{X}(\Omega)$ depends on the operator involved, and must be specified in terms of the underlying space $X_p(\Omega)$ in each case.

\begin{cor}
    Let $\Omega \subset \mathbb{R}^N$ be a bounded open subset.   
Suppose that $1<p < N$ and $1<p < q < p^* := \frac{Np}{N - p}$.  
Suppose $f$ satisfies \eqref{f1}, \eqref{f2}, \eqref{f3}, \eqref{f6}, and \eqref{f9}. 
Then,  the problem \begin{align}\label{pmixed1}
( -\Delta_{p} +(-\Delta_{p})^s) u &= f(x,u) ~\text{in}~\Omega  \nonumber\\
 u & =0 ~\text{in}~ \mathbb{R}^N \setminus \Omega,
 \end{align} admits an infinite sequence of large-energy solutions in $\mathbb{X}(\Omega)$.
\end{cor}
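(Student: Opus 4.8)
The plan is to realize the mixed local--nonlocal $p$-Laplacian $-\Delta_p + (-\Delta_p)^s$ as a special instance of the superposition operator $A_{\mu,p}$ and then to invoke Theorem~\ref{t3} directly. Concretely, I would choose the measure $\mu := \delta_1 + \delta_s$, where $\delta_1$ and $\delta_s$ denote the Dirac masses at $1$ and at the fixed order $s \in (0,1)$. With this choice the positive and negative parts are $\mu^+ = \delta_1 + \delta_s$ and $\mu^- = 0$, so that, using the convention fixed in the introduction that $(-\Delta_p)^1$ is the local $p$-Laplacian,
\[
A_{\mu,p} u = \int_{[0,1]} (-\Delta_p)^{\sigma} u \,\d\mu(\sigma) = (-\Delta_p)^{1} u + (-\Delta_p)^{s} u = -\Delta_p u + (-\Delta_p)^{s} u,
\]
which identifies the left-hand side of \eqref{pmixed1} with $A_{\mu,p}u$.

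Next I would verify that this $\mu$ satisfies the structural hypotheses \eqref{mu 1}--\eqref{mu 3}. Taking $\bar{s} := 1$ and $\kappa := 0$, condition \eqref{mu 1} holds because $\mu^+([1,1]) = \mu^+(\{1\}) = 1 > 0$; condition \eqref{mu 2} is automatic since $\mu^- = 0$; and \eqref{mu 3} reduces to $0 = \mu^-([0,1]) \le \kappa\,\mu^+(\{1\})$, which is valid for $\kappa = 0$. The exponent in \eqref{mu 4} is then forced to be $s_\sharp \in [\bar{s},1] = \{1\}$, i.e.\ $s_\sharp = 1$. Consequently the critical Sobolev exponent specializes to $p^*_{s_\sharp} = \tfrac{Np}{N-p} = p^*$, and the dimensional constraint $1<p<N/s_\sharp$ becomes exactly $1<p<N$, matching the hypotheses of the corollary, while the range $1<p<q<p^*$ is precisely the admissible one. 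I would record that the energy space $X_p(\Omega)$ attached to this atomic $\mu$ is the natural mixed Sobolev space, here denoted $\mathbb{X}(\Omega)$; note in particular that, since $\mu^- = 0$, the negative seminorm contribution $\int_{[0,\bar{s}]}[u]_{s,p}^{p}\,\d\mu^-(s)$ appearing in \eqref{functional MP} vanishes identically.

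With this dictionary between operators and spaces in place, the remaining verifications are immediate: the assumptions \eqref{f1}, \eqref{f2}, \eqref{f3}, \eqref{f6} and \eqref{f9} imposed in the corollary coincide verbatim with those required by Theorem~\ref{t3}. Since $\kappa = 0$ lies in $[0,\kappa_*]$ for the constant $\kappa_* \ge 0$ furnished by that theorem, Theorem~\ref{t3} applies and yields a sequence $(u_j)_{j\in\mathbb{N}} \subset X_p(\Omega) = \mathbb{X}(\Omega)$ of weak solutions with $\mathcal{I}(u_j) \to +\infty$, which is exactly the asserted infinite family of large-energy solutions.

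The argument is therefore purely a matter of specialization, and the only point deserving genuine care --- the ``main obstacle'', such as it is --- is the identification of $X_p(\Omega)$ with the mixed space $\mathbb{X}(\Omega)$ for the atomic measure $\mu = \delta_1 + \delta_s$. I would need to confirm that the norm induced through $\rho_p$ by the superposition of the local gradient seminorm and the order-$s$ fractional seminorm is equivalent to the natural norm of $\mathbb{X}(\Omega)$, so that the continuous and compact embeddings underpinning the Fountain Theorem argument behind Theorem~\ref{t3} are inherited without change. This should be routine once the convention $(-\Delta_p)^1 = -\Delta_p$ is fixed and the equivalence of the atomic-measure energy with the local gradient energy plus a lower-order fractional contribution is recorded.
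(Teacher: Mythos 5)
Your proposal is correct and follows exactly the route the paper takes: realize $-\Delta_p+(-\Delta_p)^s$ as $A_{\mu,p}$ with the atomic measure $\mu=\delta_1+\delta_s$, check \eqref{mu 1}--\eqref{mu 3} with $\bar{s}=1$, $\kappa=0$, $s_\sharp=1$ (so $p^*_{s_\sharp}=p^*$), and then apply Theorem~\ref{t3} with $\kappa=0\in[0,\kappa_*]$. The identification of $X_p(\Omega)$ with $\mathbb{X}(\Omega)$ that you flag is likewise how the paper handles it, so no further argument is needed.
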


We show that we can also consider nonlocal operators associated with a convergent series of Dirac measures.
For applying Theorem \ref{t3}, we define 
 $$\mu:= \sum_{k=0}^{+\infty} a_k \delta_{s_k},$$
 where $\delta_{s_k}$ represents the Dirac measures at $s_k.$   Subsequently, we observe that $\mu$ fulfills all the conditions \eqref{mu 1}-\eqref{mu 3} by choosing 
 $\bar{s}:=s_0$, $\kappa:=0$ and $s_\sharp := s_0$. Thus, we get the following result.
\begin{cor}  Let $ \Omega$ be a bounded subset of $\mathbb{R}^N$ and $1 \geq s_0>s_1>s_2> \ldots \geq 0.$ Assume that the operator 
    $$\sum_{k=0}^{+\infty} a_k (-\Delta_p)^{s_k}\quad \text{with} \quad \sum_{k=0}^{+\infty} a_k \in (0, +\infty),$$
    where $a_0>0$ and $a_k \geq 0$ for all $k \geq 1$ with $p_{s_0}^*:= \frac{pN}{N-ps_{0}}$ be the fractional critical Sobolev exponent. Let $1<p<\frac{N}{s_0}$ and $1<p<q <p^*_{s_0}.$  Suppose $f$ satisfies \eqref{f1}, \eqref{f2}, \eqref{f3}, \eqref{f6}, and \eqref{f9}.  Then the problem \begin{align}\label{problem52}
\sum_{k=0}^{+\infty} a_k (-\Delta_p)^{s_k}  u &= f(x, u) ~\text{in}~\Omega  \nonumber\\
 u & =0 ~\text{in}~ \mathbb{R}^N \setminus \Omega,
 \end{align}  possesses an infinite sequence of large-energy solutions in $\mathbb{X}(\Omega)$.
\end{cor}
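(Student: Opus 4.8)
The plan is to realize this corollary as a special case of Theorem~\ref{t3}, the only real work being to verify that the prescribed measure fits the abstract framework. First I would check that $\mu := \sum_{k=0}^{+\infty} a_k \delta_{s_k}$ is a well-defined nonnegative finite Borel measure on $[0,1]$: since $a_k \geq 0$ for every $k$ and $\sum_{k=0}^{+\infty} a_k \in (0,+\infty)$, the series defines a finite positive measure, so we may take $\mu^{+} := \mu$ and $\mu^{-} := 0$. With the choices $\bar{s} := s_0$, $\kappa := 0$, and $s_\sharp := s_0$, the structural conditions follow immediately: because $s_0$ is the largest atom and $a_0 > 0$, we have $\mu^{+}([s_0,1]) \geq \mu^{+}(\{s_0\}) = a_0 > 0$, giving \eqref{mu 1}; since $\mu^{-} \equiv 0$, both \eqref{mu 2} and \eqref{mu 3} hold trivially (the latter with $\kappa = 0$); and \eqref{mu 4} holds with $s_\sharp = s_0 \in [\bar s, 1]$ since $\mu^{+}([s_0,1]) > 0$.

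Next I would record that, under integration against a convergent sum of Dirac masses, the superposition operator \eqref{sup op} collapses to the prescribed series,
\begin{equation*}
A_{\mu,p} u = \int_{[0,1]} (-\Delta_p)^{s} u \d\mu(s) = \sum_{k=0}^{+\infty} a_k (-\Delta_p)^{s_k} u,
\end{equation*}
so that problem \eqref{p} with this $\mu$ is precisely \eqref{problem52}. The associated solution space $\mathbb{X}(\Omega)$ is then identified with $X_p(\Omega)$ for this choice of $\mu$; here the finiteness $\sum_k a_k < \infty$ together with $a_0 > 0$ guarantees that the modular $u \mapsto \big(\sum_k a_k [u]_{s_k,p}^{p}\big)^{1/p}$ is controlled by—and, via the embedding associated with the top-order seminorm $[\,\cdot\,]_{s_0,p}$, comparable to—the norm defining $X_p(\Omega)$, so the variational setting built in the paper applies verbatim.

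Finally, the exponent hypotheses $1 < p < N/s_0 = N/s_\sharp$ and $1 < p < q < p^*_{s_0} = p^*_{s_\sharp}$ coincide with those of Theorem~\ref{t3}, and $f$ is assumed to satisfy exactly \eqref{f1}, \eqref{f2}, \eqref{f3}, \eqref{f6}, and \eqref{f9}. Theorem~\ref{t3} therefore yields some $\kappa_* \geq 0$ such that the conclusion holds for all $\kappa \in [0,\kappa_*]$; since our measure forces $\kappa = 0$, which lies in $[0,\kappa_*]$ for every $\kappa_* \geq 0$, we obtain the desired infinite sequence of large-energy solutions in $\mathbb{X}(\Omega)$.

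I expect the only genuinely substantive point—rather than a routine check—to be the verification that the infinite superposition $\sum_k a_k (-\Delta_p)^{s_k}$ still induces the same functional-analytic framework as a single operator, that is, that countably many fractional orders cause no loss of coercivity or compactness. This is resolved by the dominance of the top order $s_0$ (where $a_0 > 0$) together with the summability $\sum_k a_k < \infty$, which keep $\rho_p$ equivalent to a single-order seminorm; every remaining step is a direct transcription of the hypotheses of Theorem~\ref{t3}.
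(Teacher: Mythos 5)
Your proposal is correct and follows essentially the same route as the paper: take $\mu=\sum_{k}a_k\delta_{s_k}$ with $\mu^-\equiv 0$, verify \eqref{mu 1}--\eqref{mu 3} with $\bar s=s_0$, $\kappa=0$, $s_\sharp=s_0$, and invoke Theorem~\ref{t3}. The only addition is your closing discussion of norm comparability, which is unnecessary since $\rho_p(u)=\bigl(\sum_k a_k[u]_{s_k,p}^p\bigr)^{1/p}$ is by definition the norm of $X_p(\Omega)=\mathbb{X}(\Omega)$ for this choice of $\mu$, so the variational framework applies with no extra verification.
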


An attractive scenario arises when the measure $\mu$ changes sign.  This means, for example, that the operator could use a minor term with a ``wrong" sign. To our knowledge, no existing literature addresses a result of this nature, even for the situation $p=2.$ For this,  we take $\mu:=\delta_1-\alpha \delta_s,$ where $\delta_1$ and $\delta_s$ are two Dirac measures centered at $1$ and $s \in [0, 1),$ respectively. Now, we choose $\bar{s}:=1$ and $s_\sharp:=1,$ so that the conditions  \eqref{mu 1} and \eqref{mu 2} hold. Moreover, note that 
    $$\mu^-([0, \bar{s}]) \leq \max \{0, \alpha\} = \max\{0, \alpha\} \mu^+([\bar{s}, 1]),$$
    which says that condition \eqref{mu 3} holds by choosing $\kappa:= \max\{0, \alpha\}.$ Hence, the following result follows as a direct application of Theorem \ref{t3}.
\begin{cor}
Let $\Omega$ be a bounded open subset of $\mathbb{R}^{N}$.  
Let $\mu = \mu^{+} - \mu^{-}$, with $\mu^{+}$ and $\mu^{-}$ satisfying conditions \eqref{mu 1}--\eqref{mu 3}.  
Assume that  $1<p < N$ and $1<p < q < p^*:=\frac{Np}{N-p}$.  
Suppose $f$ satisfies \eqref{f1}, \eqref{f2}, \eqref{f3}, \eqref{f6}, and \eqref{f9}. Then, for all $\alpha \geq 0,$ there exists  $\kappa_* \geq 0$ such that  for all $\kappa \in [0, \kappa_*]$, the problem \begin{align}\label{problem54}
 -\Delta_{p}u -\alpha (-\Delta_{p})^s u  &= f(x, u) ~\text{in}~\Omega  \nonumber\\
 u & =0 ~\text{in}~ \mathbb{R}^N \setminus \Omega,
 \end{align} possesses an infinite sequence of large-energy solutions in $\mathbb{X}(\Omega)$.
\end{cor}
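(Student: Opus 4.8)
The plan is to obtain this corollary as a direct specialization of Theorem~\ref{t3} under the explicit choice $\mu := \delta_1 - \alpha\delta_s$ prescribed just before the statement, so that the whole task reduces to checking that this signed measure fits the abstract hypotheses and that the abstract operator $A_{\mu,p}$ collapses to $-\Delta_p - \alpha(-\Delta_p)^s$. First I would record the Jordan decomposition $\mu^{+} = \delta_1$ and $\mu^{-} = \alpha\,\delta_s$ with $s \in [0,1)$ and $\alpha \geq 0$ (these are mutually singular since $s \neq 1$), and verify the three measure conditions with $\bar{s} := 1$. Condition \eqref{mu 1} holds since $\mu^{+}([\bar s,1]) = \delta_1(\{1\}) = 1 > 0$; condition \eqref{mu 2} holds because $s \neq 1$ forces $\mu^{-}|_{[1,1]} = 0$; and for \eqref{mu 3} I would compute $\mu^{-}([0,\bar s]) = \alpha\,\delta_s([0,1]) = \alpha$ and $\mu^{+}([\bar s,1]) = 1$, so that the required inequality $\mu^{-}([0,\bar s]) \le \kappa\,\mu^{+}([\bar s,1])$ holds exactly with $\kappa := \max\{0,\alpha\} = \alpha$. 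Taking $s_\sharp := 1$ is then consistent with \eqref{mu 4} because $\mu^{+}([1,1]) > 0$, and under $s_\sharp = 1$ the thresholds specialize to $N/s_\sharp = N$ and $p^*_{s_\sharp} = Np/(N-p) = p^*$, matching the hypotheses $1<p<N$ and $1<p<q<p^*$ in the corollary.

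Next I would evaluate the superposition operator on the Dirac data. Writing the integration variable as $\sigma$ to avoid clash with the fixed $s$, integrating $(-\Delta_p)^{\sigma} u$ against $\delta_1$ selects the value at $\sigma = 1$, which is the local $p$-Laplacian $-\Delta_p u$ (as noted after \eqref{mu 4}), while integrating against $\alpha\,\delta_s$ selects $\alpha(-\Delta_p)^{s} u$. Hence
\[
A_{\mu,p} u = \int_{[0,1]} (-\Delta_p)^{\sigma} u \,\d\mu(\sigma) = -\Delta_p u - \alpha(-\Delta_p)^{s} u,
\]
so that \eqref{problem54} is precisely problem \eqref{p} for this $\mu$. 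I would also identify the abstract energy space $X_p(\Omega)$ for this choice: since the dominant term ($s_\sharp = 1$) is local, $X_p(\Omega)$ specializes to the concrete space denoted $\mathbb{X}(\Omega)$ modeled on $W^{1,p}_0(\Omega)$, and the weak formulation of Definition~\ref{weaksol MP} reduces to testing the two explicit operators against $v \in \mathbb{X}(\Omega)$.

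Since $f$ is assumed to satisfy \eqref{f1}, \eqref{f2}, \eqref{f3}, \eqref{f6} and \eqref{f9}, every hypothesis of Theorem~\ref{t3} is now met, and its conclusion furnishes a threshold $\kappa_* \ge 0$ together with an infinite sequence of large-energy solutions in $\mathbb{X}(\Omega)$ for all $\kappa \in [0,\kappa_*]$; because the admissible constant here is exactly $\kappa = \alpha$, this is the assertion that solutions exist for every $\alpha \in [0,\kappa_*]$. I expect no genuine analytic obstacle: the entire content of the corollary is the verification above, and the only point that deserves care is condition \eqref{mu 3}, which encodes the quantitative smallness allowing the ``wrong-sign'' lower-order term $-\alpha(-\Delta_p)^{s}$ to be absorbed by the dominant local term. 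It is precisely this smallness that restricts $\alpha$ to the range $[0,\kappa_*]$ rather than to all of $[0,\infty)$, and one should note that $\kappa_*$ depends only on $\Omega$, $p$, $q$, $f$ and the fixed positive part $\mu^{+} = \delta_1$, not on $\alpha$ itself.
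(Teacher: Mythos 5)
Your proposal is correct and follows essentially the same route as the paper: one sets $\mu^{+}=\delta_1$, $\mu^{-}=\alpha\,\delta_s$, checks \eqref{mu 1}--\eqref{mu 3} with $\bar{s}=s_\sharp=1$ and $\kappa=\max\{0,\alpha\}=\alpha$, and invokes Theorem~\ref{t3}. Your closing observation that the conclusion is only non-vacuous for $\alpha\in[0,\kappa_*]$ (since the admissible constant $\kappa$ equals $\alpha$ in this specialization) is an accurate reading of what the corollary actually delivers.
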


We highlight another intriguing result arising from the continuous superposition of fractional operators of the $p$-Laplacian type. To the best of our knowledge, this result is also novel.  In this case, we define $d\mu(s):=g(s) ds$ with $g$ as in the statement of the following corollary. Thus, the operator $A_{p, \mu}$ becomes
    $$ \int_0^1 g(s) (-\Delta_p)^s u\, ds.$$
    Thanks to the conditions presented in \eqref{condf}, all the conditions \eqref{mu 1}-\eqref{mu 3} are satisfied by taking $\bar{s}:=s_\sharp.$  Thus, the following result is concluded from Theorem \ref{t3}.

\begin{cor}
 Let $ \Omega$ be a bounded subset of $\mathbb{R}^N.$    Let $s_\sharp \in (0, 1), \kappa \geq 0,$  and let $g \not\equiv 0 $ be a measurable function such that 
    \begin{align} \label{condf}
        &g \geq 0 \quad \text{in} \quad (s_\sharp, 1), \nonumber \\
        &\int_{s_\sharp}^1 g(s)\, ds >0, \\
        \text{and}\quad &\int_0^{s_\sharp} \max\{0, -g(s)\}\, ds \leq \kappa \int_{s_\sharp}^1 g(s)\, ds. \nonumber
        \end{align}
    Suppose that $1<p<\frac{N}{s_{\sharp}}$ and $1<p<q < p_{s_\sharp}^*$, where $p_{s_\sharp}^*=\frac{p N}{N-{s_\sharp}p}$ is the fractional critical Sobolev exponent.
Then,  there exists  $\kappa_* \geq 0$ such that  for all $\kappa \in [0, \kappa_*]$, the problem
\begin{equation}\label{p2.1}
 \begin{cases}
     \bigintss_0^1 g(s) (-\Delta_p)^s u\, ds  = f(x,u) \quad & \text { in } \Omega,  \\ \quad
    u  = 0  &\text { in } \mathbb{R}^{N},\backslash \Omega 
 \end{cases}
\end{equation}
admits an infinite sequence of large-energy solutions in $\mathbb{X}(\Omega)$.
    
\end{cor}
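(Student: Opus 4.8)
The plan is to recognize this corollary as a direct specialization of Theorem~\ref{t3}, so that the entire task reduces to verifying that the signed measure $d\mu(s) = g(s)\,\d s$ admits a Jordan decomposition $\mu = \mu^{+} - \mu^{-}$ satisfying \eqref{mu 1}--\eqref{mu 3} with the distinguished exponent of \eqref{mu 4}. First I would write $d\mu^{+}(s) = g^{+}(s)\,\d s$ and $d\mu^{-}(s) = g^{-}(s)\,\d s$, where $g^{+} = \max\{0,g\}$ and $g^{-} = \max\{0,-g\}$. Since the hypotheses in \eqref{condf} force $\int_{0}^{s_\sharp} g^{-}(s)\,\d s < \infty$ and $g^{-} \equiv 0$ on $(s_\sharp,1)$, together with the (implicit) integrability of $g$, both $\mu^{+}$ and $\mu^{-}$ are finite nonnegative Borel measures on $[0,1]$, as the framework requires.

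Next I would check the three structural conditions under the choice $\bar s := s_\sharp$. Condition \eqref{mu 1}, that $\mu^{+}([s_\sharp,1]) > 0$, follows because $g \geq 0$ on $(s_\sharp,1)$ gives $g^{+} = g$ there, whence $\mu^{+}([s_\sharp,1]) = \int_{s_\sharp}^{1} g(s)\,\d s > 0$ by the second line of \eqref{condf}; this simultaneously verifies \eqref{mu 4} with the same $s_\sharp$. Condition \eqref{mu 2}, that $\mu^{-}|_{[s_\sharp,1]} = 0$, is immediate, since $g \geq 0$ on $(s_\sharp,1)$ forces $g^{-} = 0$ a.e.\ there (the two endpoints carry zero Lebesgue measure). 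Finally, condition \eqref{mu 3} reads $\int_{0}^{s_\sharp} g^{-}(s)\,\d s \leq \kappa \int_{s_\sharp}^{1} g^{+}(s)\,\d s$, which is precisely the third line of \eqref{condf} once one records that $g^{+} = g$ on $(s_\sharp,1)$ and $g^{-} = \max\{0,-g\}$ on $(0,s_\sharp)$.

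With the measure thus placed inside the admissible class, and with the hypotheses \eqref{f1}, \eqref{f2}, \eqref{f3}, \eqref{f6}, \eqref{f9} on $f$ assumed by the corollary, together with the exponent constraints $1<p<N/s_\sharp$ and $1<p<q<p^*_{s_\sharp}$, all assumptions of Theorem~\ref{t3} are satisfied. I would then invoke that theorem to produce $\kappa_* \geq 0$ such that for every $\kappa \in [0,\kappa_*]$ the problem \eqref{p2.1} possesses an infinite sequence of large-energy solutions in the associated space $\mathbb{X}(\Omega)$, which completes the argument.

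The only genuine subtlety, and the single place where I would take care, is the measure-theoretic bookkeeping: one must confirm that $\mu$ is a \emph{finite} signed measure (equivalently $g \in L^1([0,1])$, which I would either read as implicit in ``measurable'' together with \eqref{condf} or state explicitly), and that the Jordan parts $g^{+}, g^{-}$ align correctly with the roles of $\mu^{+}, \mu^{-}$ across the two subintervals split at $s_\sharp$. Everything else is a routine matching of definitions, so I do not expect any essential difficulty beyond this verification.
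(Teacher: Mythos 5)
Your proposal is correct and matches the paper's own argument: the paper likewise sets $d\mu(s)=g(s)\,\d s$, observes that conditions \eqref{mu 1}--\eqref{mu 3} hold with $\bar{s}:=s_\sharp$ thanks to \eqref{condf}, and concludes by invoking Theorem~\ref{t3}. Your explicit Jordan decomposition into $g^{+},g^{-}$ and the remark on the finiteness of $\mu$ simply spell out details the paper leaves implicit.
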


 We conclude this introduction by outlining the organization of the paper. In Section \ref{pre}, we introduce the functional framework of the problem, recall several preliminary results, and establish a new norm equivalent to the standard one. Section \ref{sec3} is devoted to analyzing the compactness properties of the associated energy functional. In Section \ref{s4}, we present the Fountain Theorem and derive a key auxiliary result. Finally, the last section contains the proofs of the main results.

%%%%%%%%%%%%%%%%%%%%%%%%%%%%%%%%%%%%%%%%%%%%%%%
\section{Functional Analytic setting for the superposition operator} \label{pre}
%%%%%%%%%%%%%%%%%%%%%%%%%%%%%%%%%%%%%%%%%%%%%%%

This section is devoted to the construction of the appropriate fractional Sobolev spaces and the discussion of their fundamental properties, which are essential for the analysis of our problem. Similar treatments can be found in \cite{AGKR2025, DPSV, DPSV2, DPSV1}. To begin, for $s \in [0,1]$, we define
\begin{equation*}
    [u]_{s, p}:= \begin{cases}\|u\|_{L^{p}\left(\mathbb{R}^{N}\right)} & \text { if } s=0, \\ \left(C_{N, s, p} \iint_{\mathbb{R}^{2 N}} \frac{|u(x)-u(y)|^{p}}{|x-y|^{N+s p}} \d x \d y\right)^{1 / p} & \text { if } s \in(0,1), \\ \|\nabla u\|_{L^{p}\left(\mathbb{R}^{N}\right)} & \text { if } s=1,\end{cases}
\end{equation*}
where $C_{N,s,p}$ is the normalizing constant and we have
$$\lim _{s \searrow 0^+}[u]_{s, p}=[u]_{0, p} \quad \text { and } \quad \lim _{s \nearrow 1^-}[u]_{s, p}=[u]_{1, p}.$$

We define the space $X_{p}(\Omega)$ as the collection of measurable functions $u: \mathbb{R}^{N} \rightarrow \mathbb{R}$ such that $u=0$ in $\mathbb{R}^{N} \backslash \Omega$, for which the following norm is finite:
\begin{equation}\label{norm on Xp} \|u\|_{X_p(\Omega)} = \rho_{p}(u):=\left(\int_{[0,1]}[u]_{s, p}^{p} \d \mu^{+}(s)\right)^{1 / p}< +\infty.
\end{equation}
The space $X_p(\Omega)$ was initially introduced in \cite{DPSV2} for $p=2$ and subsequently generalized in \cite{DPSV} for $1<p<\infty$. We present the following results from \cite{DPSV} (see also \cite{AGKR2025}), which are necessary for our study.
\begin{lem}\cite{AGKR2025, DPSV}\label{Uniform convexity}
    The $X_{p}(\Omega)$ is a separable Banach space for $1 \leq p < \infty$ and is a uniformly convex Banach space, for $1<p < \infty$.
\end{lem}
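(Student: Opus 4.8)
The plan is to realize $(X_p(\Omega),\rho_p)$ as a linear isometric copy of a subspace of a suitable $L^p$-space, so that separability and uniform convexity are inherited from the ambient space, and then to establish completeness separately by a Fatou-type argument.

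First I would check that $\rho_p$ defined in \eqref{norm on Xp} is genuinely a norm. Homogeneity is immediate, and the triangle inequality follows by applying the triangle inequality of each seminorm $[\cdot]_{s,p}$ pointwise in $s$, followed by Minkowski's inequality in $L^p([0,1],\mu^+)$. For definiteness, if $\rho_p(u)=0$ then $[u]_{s,p}=0$ for $\mu^+$-a.e.\ $s$; by \eqref{mu 1} the set $[\bar s,1]$ carries positive $\mu^+$-mass with $\bar s>0$, so $[u]_{s_0,p}=0$ for some $s_0\in(0,1]$. Since the vanishing of either the Gagliardo seminorm (for $s_0\in(0,1)$) or of $\|\nabla u\|_{L^p}$ (for $s_0=1$) forces $u$ to be constant, and $u\equiv 0$ outside the bounded set $\Omega$, I conclude $u\equiv 0$.

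Next, the isometric embedding. I set $\d\nu(s,x,y):=C_{N,s,p}\,|x-y|^{-(N+sp)}\,\d x\,\d y\,\d\mu^+|_{(0,1)}(s)$ on $(0,1)\times\mathbb{R}^{2N}$, which is $\sigma$-finite (decompose $\mathbb{R}^{2N}$ into dyadic annuli away from the diagonal and use finiteness of $\mu^+$), and let $\mathcal{L}$ be the $\ell^p$-direct sum of $L^p((0,1)\times\mathbb{R}^{2N},\nu)$, $L^p(\mathbb{R}^N,\mu^+(\{0\})\,\d x)$, and $L^p(\mathbb{R}^N;\mathbb{R}^N,\mu^+(\{1\})\,\d x)$. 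The map $Tu:=\big((s,x,y)\mapsto u(x)-u(y),\,u,\,\nabla u\big)$ is linear and, after splitting $\mu^+$ into its restriction to $(0,1)$ and its atoms at $0$ and $1$, satisfies $\|Tu\|_{\mathcal{L}}=\rho_p(u)$; hence $T$ is a linear isometry. Since $\nu$ and Lebesgue measure are $\sigma$-finite with countably generated $\sigma$-algebras, $\mathcal{L}$ is separable and, for $1<p<\infty$, uniformly convex by Clarkson's inequalities. Separability and uniform convexity of $X_p(\Omega)$ as a normed space then pass to the isometric subspace $T(X_p(\Omega))$.

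It remains to prove completeness, which is the main obstacle. Let $\{u_n\}$ be Cauchy in $X_p(\Omega)$; in particular it is bounded. Using \eqref{mu 1} together with a fractional Poincar\'e inequality uniform for $s$ in the compact interval $[\bar s,1]\subset(0,1]$ (the endpoint $s=1$ being handled by the classical Poincar\'e inequality), I obtain a continuous embedding $X_p(\Omega)\hookrightarrow L^p(\Omega)$, so $\{u_n\}$ converges in $L^p(\Omega)$ to some $u$, and after passing to a subsequence $u_n\to u$ a.e. Then $u_n(x)-u_n(y)\to u(x)-u(y)$ for a.e.\ $(x,y)$, and Fatou's lemma applied fiberwise and then in $s$ gives $\rho_p(u)\le\liminf_n\rho_p(u_n)<\infty$, so $u\in X_p(\Omega)$ (when $\mu^+(\{1\})>0$ one identifies the $L^p$-limit of $\nabla u_n$ with $\nabla u$ via uniqueness of distributional gradients). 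Finally, fixing $m$ large and letting $n\to\infty$ along the subsequence in the Cauchy estimate $\rho_p(u_m-u_n)<\varepsilon$, a further application of Fatou yields $\rho_p(u_m-u)\le\varepsilon$, so $u_n\to u$ in $X_p(\Omega)$. The delicate points are the $s$-uniform Poincar\'e constant underlying the $L^p$-embedding and the bookkeeping of the atomic contributions at $s=0,1$; everything else is a routine transfer of properties along the isometry $T$.
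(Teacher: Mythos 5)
The paper gives no proof of this lemma at all—it is imported verbatim from \cite{AGKR2025, DPSV}—and your argument is essentially the one used in those references: realize $X_p(\Omega)$ isometrically inside an $L^p$-space built from the measure $C_{N,s,p}|x-y|^{-(N+sp)}\,\d x\,\d y\,\d\mu^+(s)$ together with the atomic pieces at $s=0,1$, inherit separability and uniform convexity via Clarkson's inequalities, and obtain completeness from the continuous embedding into $L^p(\Omega)$ plus Fatou. The one step you flag as delicate, the $s$-uniform bound $[u]_{0,p}\le C\,[u]_{s,p}$ needed for that embedding, is precisely Lemma \ref{Sobolev emb} of the paper (Theorem 3.2 of \cite{DPSV}), so your proof is complete and matches the cited route.
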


\begin{lem}\label{reabsorb} \cite[Proposition 4.1]{DPSV}
    Let $1<p<N$ and \eqref{mu 2} and \eqref{mu 3} be true. Then, there exists $c_{0}=c_{0}(N, \Omega, p)>0$ such that, for any $u \in X_{p}(\Omega)$, we have
\begin{equation*}
    \int_{[0, \bar{s}]}[u]_{s, p}^{p} \d \mu^{-}(s) \leq c_{0} \kappa \int_{[\bar{s}, 1]}[u]_{s, p}^{p} \d \mu(s)=c_{0} \kappa \int_{[\bar{s}, 1]}[u]_{s, p}^{p} \d \mu^+(s).
\end{equation*}
\end{lem}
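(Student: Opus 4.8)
The plan is to reduce the claimed inequality to a pointwise (in $s$) comparison between Gagliardo seminorms of different orders, and then to integrate that comparison against the measures $\mu^{\pm}$. The single ingredient that drives everything is a \emph{uniform seminorm monotonicity} on the bounded set $\Omega$: there exists a constant $C=C(N,\Omega,p)>0$ such that
\[
[u]_{s,p}\le C\,[u]_{\sigma,p}\qquad\text{for all }0\le s\le\sigma\le1\text{ and all }u\in X_p(\Omega).
\]
Granting this estimate, the lemma becomes a bookkeeping exercise with $\mu^{+}$ and $\mu^{-}$.

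First I would record that, by \eqref{mu 2}, the measure $\mu^{-}$ charges only $[0,\bar s]$ and $\mu=\mu^{+}$ on $[\bar s,1]$, which accounts for the final equality in the statement. If $\mu^{+}([\bar s,1])=0$, then \eqref{mu 3} forces $\mu^{-}([0,\bar s])=0$ and both sides vanish, so I may assume $\mu^{+}([\bar s,1])>0$. Fix $s\in[0,\bar s]$. For every $\sigma\in[\bar s,1]$ one has $s\le\sigma$, so the monotonicity estimate gives $[u]_{s,p}^p\le C^p[u]_{\sigma,p}^p$. Integrating this in $\sigma$ over $[\bar s,1]$ against $\d\mu^{+}$ and dividing by $\mu^{+}([\bar s,1])$ yields
\[
[u]_{s,p}^p\le \frac{C^p}{\mu^{+}([\bar s,1])}\int_{[\bar s,1]}[u]_{\sigma,p}^p\,\d\mu^{+}(\sigma),\qquad s\in[0,\bar s].
\]
Integrating once more, now in $s$ over $[0,\bar s]$ against $\d\mu^{-}$, and invoking \eqref{mu 3} in the form $\mu^{-}([0,\bar s])\le\kappa\,\mu^{+}([\bar s,1])$, produces exactly
\[
\int_{[0,\bar s]}[u]_{s,p}^p\,\d\mu^{-}(s)\le C^p\kappa\int_{[\bar s,1]}[u]_{\sigma,p}^p\,\d\mu^{+}(\sigma),
\]
so the claim holds with $c_0=C^p$.

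It remains to justify the monotonicity estimate, which is where the real work lies. Writing $[u]_{s,p}^p=C_{N,s,p}\iint_{\mathbb{R}^{2N}}|u(x)-u(y)|^p|x-y|^{-N-sp}\,\d x\,\d y$ and splitting the domain at $|x-y|=\ell$ with $\ell$ comparable to $\operatorname{diam}(\Omega)$, the near-diagonal part is controlled by the $\sigma$-seminorm via the elementary bound $|x-y|^{-sp}\le \ell^{(\sigma-s)p}|x-y|^{-\sigma p}$ valid for $|x-y|\le\ell$. For the far part I would use that $u$ vanishes outside $\Omega$, together with $|u(x)-u(y)|^p\le 2^{p-1}(|u(x)|^p+|u(y)|^p)$, to bound it by a multiple of $\|u\|_{L^p(\mathbb{R}^N)}^p\int_\ell^\infty r^{-1-sp}\,\d r$; the resulting factor $1/(sp)$ is absorbed by the behaviour $C_{N,s,p}\asymp s$ as $s\to 0^+$, and a fractional Poincar\'e inequality on the bounded domain $\Omega$ then controls $\|u\|_{L^p}$ by $[u]_{\sigma,p}$. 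Here the hypothesis $1<p<N$ enters through the Poincar\'e and Sobolev embeddings on $\Omega$.

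The main obstacle is the uniformity of the constant $C$ as $\sigma\to1^{-}$ (and, to a lesser extent, as $s\to0^{+}$ or $s\to1^{-}$). The crude near-diagonal estimate introduces the ratio $C_{N,s,p}/C_{N,\sigma,p}$, which degenerates because $C_{N,\sigma,p}\to0$ as $\sigma\to1$; this does not reflect the true size of $[u]_{\sigma,p}$, since the Gagliardo integral blows up simultaneously. Closing the estimate with a constant independent of $(s,\sigma)$ therefore requires the sharp Bourgain--Brezis--Mironescu / Maz'ya--Shaposhnikova asymptotics of $C_{N,s,p}$ near the endpoints, together with the continuity of $s\mapsto[u]_{s,p}$ up to $s=0,1$ (with the limits $\|u\|_{L^p}$ and $\|\nabla u\|_{L^p}$ recorded above). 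This endpoint analysis is precisely the content of \cite[Proposition 4.1]{DPSV}, from which the monotonicity estimate, and hence the lemma, follows.
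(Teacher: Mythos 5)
The paper offers no proof of this lemma: it is imported verbatim from \cite[Proposition 4.1]{DPSV}, so there is no internal argument to compare against. Your reconstruction is nonetheless correct. The reduction of the two-measure inequality to the pointwise monotonicity $[u]_{s,p}\le C\,[u]_{\sigma,p}$ for $s\le\sigma$ --- averaging that bound over $\sigma\in[\bar s,1]$ against $\mu^{+}$, dividing by $\mu^{+}([\bar s,1])$, integrating in $s$ against $\mu^{-}$, and closing with \eqref{mu 3} --- is exactly the right bookkeeping and yields $c_0=C^p$; your handling of the degenerate case $\mu^{+}([\bar s,1])=0$ and your attribution of the final equality to \eqref{mu 2} are both correct. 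The one thing worth pointing out is that the monotonicity estimate you then spend two paragraphs sketching is already available in this paper as Lemma \ref{Sobolev emb} (quoted from \cite[Theorem 3.2]{DPSV}), with a constant $C=C(N,\Omega,p)$ uniform in $s_1\le s_2$; invoking it would have finished the proof in half a page. Your sketch of that estimate is plausible in outline, and you are right to flag that the genuine difficulty is the uniformity of the constant as $\sigma\to1^-$ and $s\to0^+$, which hinges on the Bourgain--Brezis--Mironescu/Maz'ya--Shaposhnikova asymptotics of the normalizing constant $C_{N,s,p}$; that endpoint analysis is precisely what the cited reference supplies, and your deferral to it is appropriate rather than a gap.
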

\begin{lem}\label{Sobolev emb} \cite[Theorem 3.2]{DPSV}
Let $\Omega$ be an open, bounded subset of $\mathbb{R}^{N}$ and $p \in(1, N)$.
Then, there exists $C=C(N, \Omega, p)>0$ such that, for every $s_1, s_2 \in[0,1]$ with $s_1 \leq s_2$ and every measurable function $u: \mathbb{R}^{N} \rightarrow \mathbb{R}$ with $u=0$ a.e. in $\mathbb{R}^{N} \setminus \Omega$, we have
$$
[u]_{s_1, p} \leq C[u]_{s_2, p}.
$$
\end{lem}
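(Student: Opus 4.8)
The plan is to exploit the fact that $[u]_{s,p}$ interpolates between the two anchors $[u]_{0,p}=\|u\|_{L^p(\mathbb{R}^N)}$ and $[u]_{1,p}=\|\nabla u\|_{L^p(\mathbb{R}^N)}$, together with the Poincar\'e-type inequality made available by the boundedness of $\Omega$ (equivalently, the support condition $u=0$ outside $\Omega$). Concretely, for $0<s_1\le s_2<1$ I would split the Gagliardo double integral over $\mathbb{R}^{2N}$ into the near-diagonal region $D=\{(x,y):|x-y|<1\}$ and its complement $D^c$. On $D$, since $s_1\le s_2$ one has $|x-y|^{-(N+s_1p)}\le|x-y|^{-(N+s_2p)}$, so the $s_1$-integral over $D$ is dominated by the $s_2$-integral over $D$, hence by $C_{N,s_2,p}^{-1}[u]_{s_2,p}^p$. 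On $D^c$ I would use $|u(x)-u(y)|^p\le 2^{p-1}\big(|u(x)|^p+|u(y)|^p\big)$ together with the convergent tail $\int_{|z|\ge1}|z|^{-N-s_1p}\d z=\omega_{N-1}/(s_1p)$ to reduce the far part to a multiple of $\|u\|_{L^p}^p$.

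It then remains to absorb $\|u\|_{L^p}^p$ into $[u]_{s_2,p}^p$, which is exactly a fractional Poincar\'e inequality. This I would prove directly from the support condition: fixing $R$ with $\Omega\subset B_R$ and a reference annulus $A=B_{2R}\setminus B_R$ on which $u\equiv 0$, for $x\in\Omega$ and $y\in A$ one has $|u(x)|^p=|u(x)-u(y)|^p$ and $|x-y|\le 3R$, so averaging in $y$ over $A$ and integrating in $x$ yields $\|u\|_{L^p}^p\le\frac{(3R)^{N+s_2p}}{|A|}\iint_{\mathbb{R}^{2N}}\frac{|u(x)-u(y)|^p}{|x-y|^{N+s_2p}}\d x\,\d y=\frac{(3R)^{N+s_2p}}{|A|\,C_{N,s_2,p}}[u]_{s_2,p}^p$, with a prefactor $\frac{(3R)^{N+s_2p}}{|A|}$ bounded uniformly in $s_2\in[0,1]$. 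Combining the three steps and multiplying through by $C_{N,s_1,p}$ gives $[u]_{s_1,p}^p\le\big(1+\tfrac{C}{s_1p}\big)\frac{C_{N,s_1,p}}{C_{N,s_2,p}}[u]_{s_2,p}^p$. The endpoint cases $s_1=0$ and $s_2=1$ follow from the same computation together with the limit relations $\lim_{s\searrow0}[u]_{s,p}=[u]_{0,p}$ and $\lim_{s\nearrow1}[u]_{s,p}=[u]_{1,p}$ recorded above; for $s_2=1$ one replaces the near-diagonal estimate by the translation bound $\int_{|h|\le1}|h|^{(1-s_1)p-N}\d h<\infty$ and the classical Poincar\'e inequality.

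The hard part is the uniformity of the constant in $s_1,s_2$ over the whole interval $[0,1]$, which hinges on the degeneracy of the normalizing constant $C_{N,s,p}$ at the endpoints. Since $C_{N,s,p}$ is comparable to $s(1-s)$ (up to factors bounded above and below in $s$), the combination $C_{N,s_1,p}\big(1+\tfrac{C}{s_1p}\big)$ stays bounded as $s_1\searrow0$, but the division by $C_{N,s_2,p}$ is dangerous as $s_2\nearrow1$: there the crude domination $\iint_{D}\frac{|u(x)-u(y)|^p}{|x-y|^{N+s_1p}}\le\iint_{D}\frac{|u(x)-u(y)|^p}{|x-y|^{N+s_2p}}$ is far from tight, because both seminorms concentrate on the gradient, so $C_{N,s_1,p}/C_{N,s_2,p}$ blows up while the genuine ratio $[u]_{s_1,p}/[u]_{s_2,p}$ stays bounded. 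To recover uniformity near $s_2=1$ I would instead compare $[u]_{s_1,p}$ directly with $[u]_{1,p}=\|\nabla u\|_{L^p}$, using that $[u]_{s,p}\le C(N,\Omega,p)\|\nabla u\|_{L^p}$ holds uniformly in $s$ (again the $s(1-s)$ asymptotics cancels the $1/(1-s)$ and $1/s$ divergences arising from the near and far integrals), and symmetrically near $s_1=0$ against $[u]_{0,p}=\|u\|_{L^p}$. One then patches the interior range $s_1,s_2\in[\delta,1-\delta]$, where $C_{N,s,p}$ is bounded above and below, with these two endpoint regimes to produce a single constant depending only on $N,\Omega,p$.
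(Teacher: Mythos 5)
First, a remark on the comparison you asked for: the paper does not prove this lemma at all --- it is quoted verbatim from \cite[Theorem 3.2]{DPSV} --- so your attempt can only be measured against the argument in that reference, not against anything internal to this paper.

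Your opening moves (near/far splitting of the Gagliardo integral, the crude far-field bound costing a factor $1/(s_1p)$, the fractional Poincar\'e inequality via a reference annulus) are the standard ones, and you correctly isolate the real difficulty: after reinstating the normalizing constants, the estimate carries the ratio $C_{N,s_1,p}/C_{N,s_2,p}\sim \tfrac{s_1(1-s_1)}{s_2(1-s_2)}$, which blows up as $s_2\nearrow 1$. The genuine gap is that your proposed repair of this regime does not work. For $s_2<1$ close to $1$ you propose to compare $[u]_{s_1,p}$ with $[u]_{1,p}=\|\nabla u\|_{L^p}$ via the uniform bound $[u]_{s,p}\le C\|\nabla u\|_{L^p}$; but this controls $[u]_{s_1,p}$ by $[u]_{1,p}$, not by $[u]_{s_2,p}$, and to close the argument you would need the reverse inequality $[u]_{1,p}\le C[u]_{s_2,p}$, which is false: a function with finite $[u]_{s_2,p}$ need not lie in $W^{1,p}$, so $[u]_{1,p}$ may be $+\infty$ while $[u]_{s_2,p}$ is finite. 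The regime ``$s_1$ fixed, $s_2\nearrow 1$'' is precisely the crux of the lemma and is not covered by your patching scheme. The same degeneration infects your Poincar\'e step: the constant that actually multiplies $[u]_{s_2,p}^p$ is $\tfrac{(3R)^{N+s_2p}}{|A|\,C_{N,s_2,p}}$, not $\tfrac{(3R)^{N+s_2p}}{|A|}$, and it behaves like $1/(s_2(1-s_2))$. (Your final constant also blows up like $1/s_2$ as $s_2\searrow 0$, a regime you do not flag; that one is repairable by integrating over the exterior region $|x-y|\ge 2R$ instead of a fixed annulus, which yields $\|u\|_{L^p}^p\le \tfrac{C}{1-s_2}[u]_{s_2,p}^p$ --- uniform near $s_2=0$ but still degenerate near $s_2=1$.) No such elementary fix exists near $s_2=1$: there one genuinely needs a Bourgain--Brezis--Mironescu-type estimate, i.e. a bound of the unnormalized $s_1$-energy by $C\big[(1-s_2)\iint |u(x)-u(y)|^p|x-y|^{-N-s_2p}\d x\d y+\|u\|_{L^p}^p\big]$ with $C$ independent of $s_2$ (obtained via averaging/maximal-function arguments or a compactness argument), together with a fractional Poincar\'e inequality uniform as $s\to 1$. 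Supplying that uniform endpoint estimate is the substantive content of the proof in \cite{DPSV}, and it is the step your sketch is missing.
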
 
\begin{prop}\cite[Proposition 2.5]{AGKR2025}\label{compact and cont embedding}
Assume that \eqref{mu 1}–\eqref{mu 4} hold. Then, there exists a positive constant $\bar{c}=\bar{c}\left(N, \Omega, s_{\sharp}, p\right)$ such that, for any $u \in X_{p}(\Omega)$, we have
\begin{equation} \label{Xpusialemb}
[u]_{s_{\sharp},p} \leq \bar{c}\left(\int_{[0,1]}[u]_{s,p}^{p} \mathrm{~d} \mu^{+}(s)\right)^{\frac{1}{p}}.
\end{equation}
 In particular, the embedding 
\begin{equation}\label{embedding}
    X_p(\Omega) \hookrightarrow L^{r}(\Omega)
\end{equation}
is continuous for all $r \in[1,p_{s_{\sharp}}^{*}]$ and compact for all $r \in[1,p_{s_{\sharp}}^{*})$.
\end{prop}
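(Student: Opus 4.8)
The plan is to derive the seminorm estimate \eqref{Xpusialemb} first, and then feed it into the classical (fractional) Sobolev embedding machinery to obtain both the continuous and the compact embeddings into $L^r(\Omega)$.

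\textbf{Step 1: The seminorm estimate.} The key ingredient I would use is the monotonicity of the Gagliardo seminorms provided by Lemma \ref{Sobolev emb}: for every $s \geq s_\sharp$ and every admissible $u$ one has $[u]_{s_\sharp, p} \leq C\,[u]_{s,p}$ with $C = C(N,\Omega,p)$ \emph{independent of} $s$. Raising this to the power $p$ and integrating against $\mu^+$ over the interval $[s_\sharp, 1]$, I would obtain
\[
\mu^+\big([s_\sharp,1]\big)\,[u]_{s_\sharp, p}^p
= \int_{[s_\sharp,1]} [u]_{s_\sharp, p}^p \, d\mu^+(s)
\leq C^p \int_{[s_\sharp,1]} [u]_{s,p}^p \, d\mu^+(s)
\leq C^p \int_{[0,1]} [u]_{s,p}^p \, d\mu^+(s).
\]
Here I crucially invoke assumption \eqref{mu 4}, which guarantees $\mu^+([s_\sharp,1])>0$, so that dividing through yields \eqref{Xpusialemb} with the explicit constant $\bar{c} = C\,\mu^+([s_\sharp,1])^{-1/p}$.

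\textbf{Step 2: Continuous embedding.} Observing that the right-hand side of \eqref{Xpusialemb} is precisely $\|u\|_{X_p(\Omega)}$, the estimate reads $[u]_{s_\sharp, p} \leq \bar{c}\,\|u\|_{X_p(\Omega)}$. Since every $u \in X_p(\Omega)$ vanishes outside $\Omega$, the finiteness of $[u]_{s_\sharp, p}$ places $u$ in the fractional Sobolev space with zero exterior datum. Invoking the standard fractional Sobolev inequality (when $s_\sharp \in (0,1)$), or the classical Sobolev inequality on $W^{1,p}_0(\Omega)$ (when $s_\sharp = 1$), I would produce a constant $S$ with $\|u\|_{L^{p^*_{s_\sharp}}(\Omega)} \leq S\,[u]_{s_\sharp, p}$. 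Chaining this with \eqref{Xpusialemb}, and using that $\Omega$ has finite measure so that $L^{p^*_{s_\sharp}}(\Omega) \hookrightarrow L^r(\Omega)$ for every $r \leq p^*_{s_\sharp}$ by H\"older's inequality, gives the continuous embedding for all $r \in [1, p^*_{s_\sharp}]$.

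\textbf{Step 3: Compact embedding and main obstacle.} For $r < p^*_{s_\sharp}$, I would take any bounded sequence $(u_j)$ in $X_p(\Omega)$; by Step 1 it is bounded in the $[\cdot]_{s_\sharp,p}$ seminorm, hence bounded in the fractional (or classical, if $s_\sharp = 1$) Sobolev space of functions supported in $\overline{\Omega}$, and the fractional Rellich--Kondrachov theorem furnishes a subsequence converging strongly in $L^r(\Omega)$, which is exactly the asserted compactness. The only genuine subtlety is the \emph{uniformity} of the constant $C$ in Lemma \ref{Sobolev emb}: it must not depend on the particular $s \in [s_\sharp,1]$, otherwise the integration against $\mu^+$ in Step 1 would not yield a finite bound; this uniformity is precisely what that lemma asserts. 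A secondary point to acknowledge is the endpoint case $s_\sharp = 1$, where the fractional embedding and Rellich theorems are replaced by their classical $W^{1,p}_0(\Omega)$ counterparts; since $s_\sharp \in [\bar{s},1]$ with $\bar{s} \in (0,1]$ by hypothesis, this case genuinely occurs and must be treated separately.
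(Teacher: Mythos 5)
Your proof is correct and follows essentially the same route as the cited argument in \cite{AGKR2025} (which in turn rests on \cite{DPSV}): one integrates the $s$-uniform monotonicity estimate of Lemma~\ref{Sobolev emb} against $\mu^{+}$ over $[s_\sharp,1]$, divides by $\mu^{+}([s_\sharp,1])>0$ as guaranteed by \eqref{mu 4}, and then invokes the fractional (or, when $s_\sharp=1$, classical) Sobolev and Rellich--Kondrachov theorems for functions vanishing outside the bounded set $\Omega$. The only cosmetic remark is that your explicit constant $\bar c = C\,\mu^{+}([s_\sharp,1])^{-1/p}$ makes visible a dependence on the measure that the notation $\bar c(N,\Omega,s_\sharp,p)$ in the statement suppresses; this dependence is unavoidable (rescaling $\mu^{+}$ changes the right-hand side but not the left), and since $\mu$ is fixed data nothing is lost.
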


We now recall the following weak convergence results from \cite{DPSV}. 
\begin{lem} \label{weak convergence}  \cite[ Lemma 5.8]{DPSV}
    Let $(u_{n})$ be a bounded sequence in $X_{p}(\Omega)$. Then, there exists $u \in X_{p}(\Omega)$ such that for all $v \in X_{p}(\Omega)$, we have
\begin{align}
    \lim _{n \rightarrow\infty} &\int_{[0,1]}\left(\iint_{\mathbb{R}^{2 N}} \frac{\left|u_{n}(x)-u_{n}(y)\right|^{p-2}\left(u_{n}(x)-u_{n}(y)\right)(v(x)-v(y))}{|x-y|^{N+s p}} \d x \d y\right) \d \mu^{ \pm}(s)\nonumber \\
&=\int_{[0,1]}\left(\iint_{\mathbb{R}^{2 N}} \frac{|u(x)-u(y)|^{p-2}(u(x)-u(y))(v(x)-v(y))}{|x-y|^{N+s p}} \d x \d y\right) \d \mu^{ \pm}(s).
\end{align}
\end{lem}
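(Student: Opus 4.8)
The plan is to reduce the assertion to a single weak-continuity statement for the map $w \mapsto |w|^{p-2}w$, carried out on one product measure space; the advantage of this reformulation is that it avoids any control of the seminorms $[u_n]_{s,p}$ pointwise in $s$. Since $X_p(\Omega)$ is uniformly convex (Lemma \ref{Uniform convexity}) it is reflexive, so the bounded sequence $(u_n)$ has a subsequence, still denoted $(u_n)$, with $u_n \rightharpoonup u$ in $X_p(\Omega)$ for some $u \in X_p(\Omega)$. By the compact embedding of Proposition \ref{compact and cont embedding}, $u_n \to u$ strongly in $L^r(\Omega)$ for every $r\in[1,p^*_{s_\sharp})$, and after passing to a further subsequence we may assume $u_n \to u$ a.e.\ in $\mathbb{R}^N$ (recall $u_n=u=0$ outside $\Omega$). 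This a.e.\ convergence is the crucial ingredient, precisely because the Gagliardo difference quotients are built pointwise from $u_n$.

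First I would treat the genuinely nonlocal part, i.e.\ the restriction of $\mu^{\pm}$ to $(0,1)$, encoding it as one integral. On $\mathbb{R}^{2N}\times(0,1)$ introduce the positive measures $\d\lambda^{\pm}(x,y,s):=C_{N,s,p}|x-y|^{-(N+sp)}\,\d x\,\d y\,\d\mu^{\pm}(s)$, and set $W_n:=u_n(x)-u_n(y)$, $W:=u(x)-u(y)$ and $\Psi:=v(x)-v(y)$. By the definition \eqref{norm on Xp} of $\rho_p$ one has $\|W_n\|_{L^p(\lambda^{+})}^p\le\rho_p(u_n)^p$ and $\|\Psi\|_{L^p(\lambda^{+})}^p\le\rho_p(v)^p<+\infty$, so $(W_n)$ is bounded in $L^p(\lambda^{+})$ and $\Psi\in L^p(\lambda^{+})$; for the $\mu^{-}$ companion the same two facts follow from Lemma \ref{reabsorb} and Lemma \ref{Sobolev emb} (the latter bounding $[v]_{s,p}$ for $s\le\bar{s}\le s_\sharp$, together with finiteness of $\mu^{-}$). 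Moreover, if $Z\subset\mathbb{R}^N$ is the Lebesgue-null set off which $u_n\to u$, then $\{x\in Z\}\cup\{y\in Z\}$ is $\lambda^{\pm}$-null, since integrating a fixed measurable function over the null set $Z$ in the $x$- (or $y$-) variable yields zero; hence $W_n\to W$ $\lambda^{\pm}$-a.e.

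The proof then rests on the standard fact that if $1<p<\infty$, $(W_n)$ is bounded in $L^p(\lambda^{\pm})$ and $W_n\to W$ $\lambda^{\pm}$-a.e., then $|W_n|^{p-2}W_n\rightharpoonup|W|^{p-2}W$ in $L^{p'}(\lambda^{\pm})$ with $p'=p/(p-1)$: indeed $|W_n|^{p-2}W_n$ is bounded in $L^{p'}(\lambda^{\pm})$, its a.e.\ limit equals $|W|^{p-2}W$, and the a.e.\ limit must coincide with any weak $L^{p'}$-limit, which pins down the weak limit and hence the whole sequence. Pairing this with $\Psi\in L^p(\lambda^{\pm})$ and rewriting each side as the iterated integral (Tonelli) yields exactly the claimed limit for the contribution of $s\in(0,1)$; the atom at $s=0$, which contributes $\mu^{\pm}(\{0\})\int_\Omega|u_n|^{p-2}u_n\,v\,\d x$, is immediate from the strong $L^p$-convergence $u_n\to u$. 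The main obstacle I anticipate is twofold: making the weak-convergence lemma rigorous on the singular kernel measure $\lambda^{\pm}$ — in particular verifying the a.e.\ convergence against a non-integrable kernel and identifying the weak $L^{p'}$-limit — and, separately, the genuinely local atom at $s=1$, where the relevant quantity is $\nabla u_n$, for which a.e.\ convergence of $u_n$ gives no information when $p\neq2$; that term must therefore be handled through the monotonicity (type $(S_+)$) structure of the operator rather than by the a.e.\ argument above. Casting the nonlocal part on the single space $\lambda^{\pm}$ is precisely what removes the otherwise delicate need to bound $[u_n]_{s,p}$ uniformly for $s$ close to $1$.
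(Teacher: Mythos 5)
Your argument addresses a lemma that the paper itself does not prove: it is imported verbatim from \cite{DPSV}, so there is no in-paper proof to compare against. For the contribution of $\mu^{\pm}$ restricted to $[0,1)$ your reduction is correct and is the natural one: extract a weakly convergent subsequence via Lemma \ref{Uniform convexity} and Proposition \ref{compact and cont embedding}, view the difference quotients $W_n$ as a bounded sequence in $L^p$ of the single product measure $\lambda^{\pm}$ (with the $\mu^{-}$ part controlled by Lemma \ref{reabsorb} or Lemma \ref{Sobolev emb}), note that $\lambda^{\pm}$ is $\sigma$-finite and does not charge the diagonal $\{x=y\}$, and invoke the standard fact that a sequence bounded in $L^{p'}$ and convergent a.e.\ converges weakly in $L^{p'}$ to its a.e.\ limit. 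The atom at $s=0$ is indeed immediate from the strong $L^p$ convergence. Up to routine bookkeeping this part is sound.

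The genuine gap is the atom at $s=1$, which you flag but do not close, and the repair you point to does not apply under the stated hypotheses. When $\mu^{+}(\{1\})>0$ the relevant term is $\int_\Omega |\nabla u_n|^{p-2}\nabla u_n\cdot\nabla v\,\d x$, and for a merely bounded sequence one only gets $\nabla u_n\rightharpoonup\nabla u$ in $L^p(\Omega)$; for $p\neq 2$ this does not imply $|\nabla u_n|^{p-2}\nabla u_n\rightharpoonup|\nabla u|^{p-2}\nabla u$ in $L^{p'}(\Omega)$. Concretely, with $\Omega=(0,1)$, $p=4$, $\mu=\delta_1$ and $u_n=u+n^{-1}\sin(2\pi nx)$, one has $u_n\rightharpoonup u$ in $W^{1,4}_0(0,1)$ and $u_n\to u$ a.e., yet $(u_n')^3\rightharpoonup (u')^3+6\pi^2 u'$, since the square of the oscillating part has nonzero mean; so the conclusion fails with $u$ equal to the weak limit. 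The $(S_+)$/Minty machinery you invoke identifies the weak limit of $|\nabla u_n|^{p-2}\nabla u_n$ only when one additionally knows $\limsup_n\langle \mathcal{I}'(u_n)+f(\cdot,u_n),u_n-u\rangle\le 0$ (or some equivalent), which is available for Palais--Smale or Cerami sequences but not for an arbitrary bounded sequence, which is the hypothesis here. Thus your proof either must allow the limit function to differ from the weak limit of $(u_n)$ --- which the literal wording ``there exists $u$'' permits, but which would break the way the lemma is used in \eqref{eq3.10}--\eqref{eq3.17}, where $u$ must coincide with the weak limit from \eqref{convergences} --- or it must supply a separate identification argument for the local atom, which the proposal does not contain.
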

We state the following Brezis-Lieb type lemma from \cite{DPSV}. 
\begin{lem}\label{B-L lemma} \cite[ Lemma 5.9]{DPSV}
   Let $u_{n}$ be a bounded sequence in $X_{p}(\Omega)$. Suppose that $u_{n}$ converges to some $u$ a.e. in $\mathbb{R}^{N}$ as $n \rightarrow\infty$. Then we have
\begin{equation}
\int_{[0,1]}[u]_{s, p}^{p} \d \mu^{ \pm}(s)=\lim _{n \rightarrow\infty}\left(\int_{[0,1]}\left[u_{n}\right]_{s, p}^{p} \d \mu^{ \pm}(s)-\int_{[0,1]}\left[u_{n}-u\right]_{s, p}^{p} \d \mu^{ \pm}(s)\right).
\end{equation}
\end{lem}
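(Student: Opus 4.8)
The plan is to recognise each of the two claimed identities (the one for $\mu^{+}$ and the one for $\mu^{-}$) as a single instance of the classical Brezis--Lieb lemma, applied not fibrewise in $s$ but on one auxiliary measure space that simultaneously encodes the Gagliardo double integral and the integration against $\mu^{\pm}$. For the nonlocal range $s\in(0,1)$ I would introduce on $\mathbb{R}^{2N}\times(0,1)$ the positive measure
\[
\d\Lambda^{\pm}(x,y,s):=\frac{C_{N,s,p}}{|x-y|^{N+sp}}\,\d x\,\d y\,\d\mu^{\pm}(s),
\]
together with $V_n(x,y,s):=u_n(x)-u_n(y)$ and $V(x,y,s):=u(x)-u(y)$. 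Then $\|V_n\|_{L^p(\Lambda^{\pm})}^p=\int_{(0,1)}[u_n]_{s,p}^p\,\d\mu^{\pm}(s)$, and likewise for $V$ and $V_n-V$, so that the asserted identity restricted to $(0,1)$ is precisely the Brezis--Lieb identity $\|V_n\|_p^p-\|V_n-V\|_p^p\to\|V\|_p^p$ on the space $(\mathbb{R}^{2N}\times(0,1),\Lambda^{\pm})$. The merit of this packaging is that the domination needed for Brezis--Lieb is produced internally by the lemma itself, so I avoid having to construct an explicit $\mu^{\pm}$-integrable majorant of $[u_n]_{s,p}^p$ for $s$ near $1$, which is the awkward point of any pointwise-in-$s$ plus dominated-convergence argument.

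To invoke the classical lemma I need two inputs. First, boundedness of $V_n$ in $L^p(\Lambda^{\pm})$: for $\mu^{+}$ this is immediate, since $\|V_n\|_{L^p(\Lambda^{+})}^p\le\rho_p(u_n)^p$ is bounded by hypothesis, while for $\mu^{-}$ it follows from the reabsorption estimate of Lemma \ref{reabsorb}, which bounds $\int_{[0,\bar s]}[u_n]_{s,p}^p\,\d\mu^{-}$ by $c_0\kappa\,\rho_p(u_n)^p$; the same two estimates give $V\in L^p(\Lambda^{\pm})$ because the weak limit $u$ furnished by Lemma \ref{weak convergence} lies in $X_p(\Omega)$. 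Second, $\Lambda^{\pm}$-almost everywhere convergence $V_n\to V$: if $E\subset\mathbb{R}^N$ is the Lebesgue-null set off which $u_n\to u$ pointwise, then $V_n\to V$ everywhere outside $\{x\in E\}\cup\{y\in E\}$, and each of these two sets is $\Lambda^{\pm}$-null, since $\Lambda^{\pm}(\{x\in E\})=\int_{(0,1)}\big(\int_E(\cdots)\,\d x\big)\,\d\mu^{\pm}(s)=0$: the inner integral is taken over a Lebesgue-null set in the $x$-variable and therefore vanishes regardless of the (possibly infinite) integrand. With these two inputs, the classical Brezis--Lieb lemma yields the identity on $(0,1)$.

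It then remains to account for the atoms of $\mu^{\pm}$ at the endpoints. The contribution at $s=0$ is $\mu^{\pm}(\{0\})\,\|u_n\|_{L^p}^p$; since $u_n\to u$ a.e. and $(u_n)$ is bounded in $L^p(\mathbb{R}^N)$ by the continuous embedding of Proposition \ref{compact and cont embedding}, the ordinary $L^p$ Brezis--Lieb lemma disposes of it. Note moreover that condition \eqref{mu 2} forces $\mu^{-}(\{1\})=0$, so for the $\mu^{-}$ identity there is no local endpoint term at all, and summing the pieces $\{0\}$ and $(0,1)$ completes that case directly.

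The hard part is the local endpoint $s=1$ for the $\mu^{+}$ identity, where the contribution is $\mu^{+}(\{1\})\,\|\nabla u_n\|_{L^p}^p$. Here a naive application of Brezis--Lieb would require $\nabla u_n\to\nabla u$ a.e., which does \emph{not} follow from a.e. convergence of $u_n$ alone, oscillating gradients being the obstruction. I would treat this term outside the a.e. framework: when $p=2$ the gradient identity holds using only the weak convergence $\nabla u_n\rightharpoonup\nabla u$ in $L^2$, as the cross term vanishes; for general $p$ one must either supplement the hypotheses with a.e. convergence of gradients, or, as in the compactness arguments where this lemma is actually applied, handle the local principal part separately through the monotonicity / $(S_+)$ property of $-\Delta_p$ rather than through Brezis--Lieb. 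In short, the decomposition $[0,1]=\{0\}\cup(0,1)\cup\{1\}$ together with the single-measure-space reduction settles everything except the local endpoint $s=1$, which is the genuine obstacle and the only place where the bare a.e. hypothesis is insufficient.
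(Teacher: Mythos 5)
The paper does not actually prove this statement: Lemma~\ref{B-L lemma} is imported verbatim from \cite[Lemma 5.9]{DPSV}, so there is no in-paper argument to compare yours against. Judged on its own terms, the core of your proposal is sound and efficient for most of the measure: packaging the Gagliardo kernel and $\d\mu^{\pm}$ into the single measure $\Lambda^{\pm}$ on $\mathbb{R}^{2N}\times(0,1)$ and applying the classical Brezis--Lieb lemma to $V_n(x,y,s)=u_n(x)-u_n(y)$ is correct; the boundedness inputs (trivial for $\mu^{+}$, via Lemma~\ref{reabsorb} for $\mu^{-}$) and the observation that $\{x\in E\}$ and $\{y\in E\}$ are $\Lambda^{\pm}$-null for a Lebesgue-null $E$ are both fine, and the atom at $s=0$ is disposed of by the ordinary $L^p$ Brezis--Lieb lemma. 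This genuinely avoids the awkward fibrewise-in-$s$ domination argument.

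The gap you flag at the atom $s=1$ is, however, real and is not a removable blemish of your method: for $p\neq 2$, a.e.\ convergence of $u_n$ together with boundedness of $\nabla u_n$ in $L^p$ does \emph{not} imply $\lim_n\bigl(\|\nabla u_n\|_p^p-\|\nabla(u_n-u)\|_p^p\bigr)=\|\nabla u\|_p^p$; sequences of the form $u_n=u+\tfrac1n\sin(nx_1)\psi$ have uniformly convergent $u_n$ and oscillating gradients for which the averaged limit of $|\nabla u+\cos(nx_1)\psi e_1|^p-|\cos(nx_1)\psi e_1|^p$ differs from $|\nabla u|^p$ once $p\neq 2$. So your argument does not establish the lemma in the stated generality whenever $\mu^{+}(\{1\})>0$ and $p\neq 2$, and this cannot be repaired within the bare hypotheses ``bounded $+$ a.e.\ convergent''; one must either secure a.e.\ convergence of $\nabla u_n$ (which is available for the Palais--Smale and Cerami sequences to which the lemma is actually applied in Section~\ref{sec3}, via monotonicity of $-\Delta_p$, or, more simply there, via the Radon--Riesz property of the uniformly convex space $X_p(\Omega)$ once $\eta_p(u_n)\to\eta_p(u)$ is known) or defer to the proof of \cite[Lemma 5.9]{DPSV} for the local endpoint. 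Your $p=2$ remark (weak convergence of gradients suffices because the identity is then algebraic) is correct but does not cover the range $1<p<\infty$ claimed here.
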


In \cite{BGK2025}, we establish a novel norm on $X_p(\Omega)$ that is equivalent to the norm $\rho_p$ as defined in \eqref{norm on Xp}.
    \begin{lem}\label{lmn2.8}\cite[Lemma 2.7]{BGK2025}
        Let $\mu$ satisfies the assumptions \eqref{mu 1}-\eqref{mu 4}. Then there exists a sufficiently small $\kappa_0(N,\Omega,p)$ such that for all $\kappa\in [0,\kappa_0]$, we have a norm $\eta_p$ on $X_p(\Omega)$, which is equivalent to the norm $\rho_p$ as in \eqref{norm on Xp}, where
      \begin{align}\label{eq2.6}
         \eta_{p}(u):=\left[\mathcal{H}(u)\right]^\frac{1}{p}=\left(\int_{[0,1]}[u]_{s, p}^{p} \d \mu^{+}(s)-\int_{[0,\bar{s})}[u]_{s, p}^{p} \d \mu^{-}(s)\right)^{1 / p}.
    \end{align}
    \end{lem}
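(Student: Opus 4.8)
The plan is to verify that $\eta_p$ satisfies the three norm axioms together with a two-sided comparison with $\rho_p$. Positivity, definiteness, homogeneity, and the equivalence are short consequences of Lemma~\ref{reabsorb}; the triangle inequality is the delicate point, and it is precisely there that the \emph{smallness} of $\kappa_0$ (rather than a mere positivity threshold) is needed.

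First I would record the two-sided bound at the level of $\mathcal H$. The upper bound is immediate: the subtracted term is nonnegative, so $\mathcal H(u)\le \rho_p(u)^p$. For the lower bound, note that by \eqref{mu 2} the measure $\mu^-$ carries no mass on $[\bar s,1]$, so $\int_{[0,\bar s)}[u]_{s,p}^p\,\d\mu^-(s)=\int_{[0,\bar s]}[u]_{s,p}^p\,\d\mu^-(s)$, and Lemma~\ref{reabsorb} gives
\[
\int_{[0,\bar s)}[u]_{s,p}^p\,\d\mu^-(s)\le c_0\kappa\int_{[\bar s,1]}[u]_{s,p}^p\,\d\mu^+(s)\le c_0\kappa\,\rho_p(u)^p .
\]
Hence $\mathcal H(u)\ge (1-c_0\kappa)\rho_p(u)^p$. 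Taking $\kappa_0<1/c_0$ forces $1-c_0\kappa>0$ for all $\kappa\in[0,\kappa_0]$, which at once yields $\mathcal H(u)\ge 0$, the definiteness $\mathcal H(u)=0\Rightarrow u=0$, and the equivalence
\[
(1-c_0\kappa)^{1/p}\,\rho_p(u)\le \eta_p(u)\le \rho_p(u).
\]
Positive homogeneity is clear from $[\lambda u]_{s,p}=|\lambda|\,[u]_{s,p}$.

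The remaining point is subadditivity of $\eta_p$, and here I would split off the three natural seminorms
\[
P(u)^p:=\int_{[\bar s,1]}[u]_{s,p}^p\,\d\mu^+,\qquad Q(u)^p:=\int_{[0,\bar s)}[u]_{s,p}^p\,\d\mu^+,\qquad R(u)^p:=\int_{[0,\bar s)}[u]_{s,p}^p\,\d\mu^-,
\]
so that $\rho_p^p=P^p+Q^p$ and $\mathcal H=(P^p-R^p)+Q^p$, while Lemma~\ref{reabsorb} now reads $R\le (c_0\kappa)^{1/p}P=:\theta P$ with $\theta<1$. Each of $P,Q,R$ is (the $p$-th power of) a genuine $L^p$-seminorm of a linear image of $u$, via the Gagliardo transform $\Phi u(x,y,s)=C_{N,s,p}^{1/p}(u(x)-u(y))|x-y|^{-(N+sp)/p}$ against the positive measures $\d x\,\d y\,\d\mu^{\pm}(s)$ (with the obvious modifications at the endpoints $s\in\{0,1\}$). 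By Minkowski's inequality applied to the $\ell^p$-combination of the two seminorms $S:=(P^p-R^p)^{1/p}$ and $Q$, the functional $\mathcal H^{1/p}=(S^p+Q^p)^{1/p}$ is subadditive as soon as $S$ is. Everything therefore reduces to showing that $S=(P^p-R^p)^{1/p}$ obeys the triangle inequality.

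This last reduction is the main obstacle, and it is exactly where $\kappa_0$ must be genuinely small: for a subtracted seminorm $R$ constrained only by $R\le\theta P$, the root $(P^p-R^p)^{1/p}$ fails to be subadditive when $\theta$ approaches $1$, so positivity of $\mathcal H$ is not enough. To close it I would prove that $u\mapsto P(u)^p-R(u)^p$ is convex for $\theta$ small; being positively $p$-homogeneous, its convexity is equivalent to subadditivity of its $1/p$-th root. The strategy is to balance the strong (Clarkson-type) convexity of the $L^p$-functional $P^p$ against the controlled size of $R$: since $P^p$ carries a fixed uniform-convexity modulus while the convexity defect of $R^p$ is damped by $\theta^p$, one obtains convexity of $P^p-R^p$ once $\theta=(c_0\kappa)^{1/p}$ is sufficiently small, which fixes the threshold $\kappa_0=\kappa_0(N,\Omega,p)$ through $c_0$ and the convexity constants. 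Carrying out this quantitative convexity balance uniformly in $1<p<\infty$, where the relevant Clarkson inequalities change character at $p=2$, is the technical heart of the argument.
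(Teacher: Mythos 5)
Your two-sided comparison is correct and is the part of the statement that the present paper actually relies on (the proof itself is only cited from \cite{BGK2025}): combining Lemma~\ref{reabsorb} with \eqref{mu 2} gives $(1-c_0\kappa)\,[\rho_p(u)]^p\le \mathcal H(u)\le [\rho_p(u)]^p$, so taking $\kappa_0<1/c_0$ yields nonnegativity, definiteness and the equivalence; homogeneity is immediate, and your Minkowski reduction of the triangle inequality for $\eta_p=(S^p+Q^p)^{1/p}$ to subadditivity of $S:=(P^p-R^p)^{1/p}$ is also valid.

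The gap is precisely the step you defer as the ``technical heart'': subadditivity of $S$ is never established, and the route you propose for it cannot work when $p>2$. You want to absorb the concavity of $-R^p$ into a Clarkson-type strong convexity of $P^p$, but for $p>2$ the functional $\|\cdot\|_{L^p}^p$ is not strongly convex: its midpoint convexity gain at $u,v$ is of order $P\bigl(\tfrac{u-v}{2}\bigr)^p$, i.e.\ of power type $p$ in the increment, whereas the midpoint concavity defect contributed by $-R^p$ is generically of power type $2$, so no smallness of $\theta=(c_0\kappa)^{1/p}$ restores convexity, nor even convexity of the sublevel set. A concrete model within your own framework: take $p=4$ on $\mathbb{R}^2$, $P(u)=(u_1^4+u_2^4)^{1/4}$ and $R(u)=\theta\,2^{-3/4}|u_1+u_2|$, so that $R\le\theta P$ by H\"older and $R(u)^4=c\,(u_1+u_2)^4$ with $c=\theta^4/8$. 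For $u=(1,\delta)$, $v=(1,-\delta)$ one computes
\[
S(u)+S(v)=2(1-c)^{1/4}-\frac{3c}{(1-c)^{7/4}}\,\delta^{2}+O(\delta^{3})<2(1-c)^{1/4}=S(u+v)
\]
for all sufficiently small $\delta\neq 0$ and every $\theta\in(0,1)$, so the triangle inequality for $S$ fails no matter how small $\kappa$ is. Hence the only information you actually invoke --- that $P$ and $R$ are $L^p$-seminorms of linear images of $u$ with $R\le\theta P$ --- is insufficient to conclude that $\eta_p$ is a norm for $p\neq 2$ (the case $p=2$ is genuinely different, since there $\mathcal H$ is a positive definite quadratic form). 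A complete argument must either exploit finer structural properties of the Gagliardo seminorms beyond the single bound of Lemma~\ref{reabsorb}, or content itself with what you have fully proved, namely that $\eta_p$ is a positively homogeneous functional two-sidedly comparable to $\rho_p$.
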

\begin{rem}
    Note that for $\kappa=0$, we have $[\rho_p(u)]=[\eta_p(u)]$ for any $u\in X_p(\Omega)$. In case of $p=2$, we  define $[\eta_2(u)]:=\sqrt{\langle u,u \rangle}$ for any $u\in X_2(\Omega)$, where $${\langle u,v\rangle}:={\langle u,v\rangle}^+-{\langle u,v\rangle}^-$$ 
    with 
    $${\langle u,v\rangle}^+:=\int_{[0,1]}\left(\iint_{\mathbb{R}^{2 N}} \frac{C_{N, s}(u(x)-u(y))(v(x)-v(y))}{|x-y|^{N+2s}} \d x \d y\right) \d \mu^{+}(s)$$
    $${\langle u,v\rangle}^-:=\int_{[0, \bar{s}]}\left(\iint_{\mathbb{R}^{2 N}} \frac{C_{N, s}(u(x)-u(y))(v(x)-v(y))}{|x-y|^{N+2s}} \d x \d y\right) \d \mu^{-}(s).$$
    Therefore, $X_2(\Omega)$ reduces to a Hilbert space concerning the inner product $\langle u,v \rangle$ for any $\kappa \in [0,\kappa_0]$, where $\kappa_0$ is sufficiently small. Further properties of the Hilbert space $X_2(\Omega)$ and related problems, we refer to \cite{ABB2025, AGKR2025,  DPSV2025}. 
\end{rem}

%%%%%%%%%%%%%%%%%%%%%%%%%%%%%%%%%%%%%%%%%%%%%%%
\section{Important Results: Compactness conditions} \label{sec3}
%%%%%%%%%%%%%%%%%%%%%%%%%%%%%%%%%%%%%%%%%%%%%%%
This section is devoted to verifying the compactness properties of the energy functional $\mathcal{I}$ associated with problem~\eqref{p}, namely the Palais-Smale and Cerami conditions, which are essential for the application of the Fountain Theorem.

We begin by considering the case in which the nonlinearity $f$ satisfies the Ambrosetti-Rabinowitz condition \eqref{f4}. In this setting, we demonstrate that the energy functional $\mathcal{I}$ satisfies the Palais--Smale $\mathrm{(PS)}$ condition.
\begin{lem}\label{lmn3.1}
   Suppose that $f:\bar{\Omega}\times \mathbb{R}\rightarrow \mathbb{R}$ satisfies the conditions \eqref{f1}, \eqref{f2}, and \eqref{f4}. Then, the energy functional $\mathcal{I}$ verifies the Palais-Smale $\mathrm{(PS)}$ condition at any level $c\in \mathbb{R}$.
\end{lem}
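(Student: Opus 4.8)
The plan is to verify the two ingredients of the $\mathrm{(PS)}$ condition in turn: first, that any sequence $(u_j)\subset X_p(\Omega)$ with $\mathcal{I}(u_j)\to c$ and $\mathcal{I}'(u_j)\to 0$ in $X_p(\Omega)^*$ is bounded, and second, that such a bounded sequence admits a strongly convergent subsequence. Throughout I would work with the equivalent norm $\eta_p$ of Lemma~\ref{lmn2.8} (legitimate since $\kappa$ is taken small enough for Lemma~\ref{lmn2.8} to apply), for which $\mathcal{I}(u)=\tfrac1p[\eta_p(u)]^p-\int_\Omega F(x,u)\d x$ and $\langle\mathcal{I}'(u),u\rangle=[\eta_p(u)]^p-\int_\Omega f(x,u)u\d x$, using $\mu^-(\{\bar s\})=0$ from \eqref{mu 2} to identify the $[0,\bar s]$ and $[0,\bar s)$ integrals.

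For boundedness I would form the combination $\mathcal{I}(u_j)-\tfrac1\mu\langle\mathcal{I}'(u_j),u_j\rangle$, where $\mu>p$ is the AR constant of \eqref{f4}. This equals $\big(\tfrac1p-\tfrac1\mu\big)[\eta_p(u_j)]^p-\int_\Omega\big(F(x,u_j)-\tfrac1\mu f(x,u_j)u_j\big)\d x$. The AR inequality \eqref{f4} forces the integrand to be nonpositive where $|u_j|\ge r$, while on $\{|u_j|<r\}$ it is bounded by a constant via \eqref{f1} on the compact set $\bar\Omega\times[-r,r]$; hence the integral is bounded above by $C|\Omega|$. On the left, $\mathcal{I}(u_j)$ is bounded and $|\langle\mathcal{I}'(u_j),u_j\rangle|\le\|\mathcal{I}'(u_j)\|_*\,\|u_j\|_{X_p(\Omega)}=o(1)\|u_j\|_{X_p(\Omega)}$. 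Since $\big(\tfrac1p-\tfrac1\mu\big)>0$ and $\eta_p$ is equivalent to $\|\cdot\|_{X_p(\Omega)}=\rho_p$ (Lemma~\ref{lmn2.8}, itself resting on the reabsorption Lemma~\ref{reabsorb}), this gives $c_1\|u_j\|^p_{X_p(\Omega)}\le C+o(1)\|u_j\|_{X_p(\Omega)}$, and as $p>1$ the sequence $(u_j)$ is bounded.

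Next, $X_p(\Omega)$ is reflexive by Lemma~\ref{Uniform convexity}, so up to a subsequence $u_j\rightharpoonup u$ in $X_p(\Omega)$; the compact embedding of Proposition~\ref{compact and cont embedding} then yields $u_j\to u$ in $L^q(\Omega)$ and a.e. in $\Omega$. Using the growth bound \eqref{f2} and H\"older's inequality, the Nemytskii term satisfies $\int_\Omega f(x,u_j)(u_j-u)\d x\to0$, $\int_\Omega f(x,u_j)u_j\d x\to\int_\Omega f(x,u)u\d x$, and for each fixed $v$, $\int_\Omega f(x,u_j)v\d x\to\int_\Omega f(x,u)v\d x$. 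Combining the last limit with the weak-operator convergence of Lemma~\ref{weak convergence} and $\langle\mathcal{I}'(u_j),v\rangle\to0$ identifies $\langle\mathcal{I}'(u),v\rangle=0$ for all $v$, so $u$ is a critical point; testing with $u$ gives $[\eta_p(u)]^p=\int_\Omega f(x,u)u\d x$. On the other hand, $\langle\mathcal{I}'(u_j),u_j\rangle\to0$ together with boundedness gives $[\eta_p(u_j)]^p=\int_\Omega f(x,u_j)u_j\d x+o(1)\to\int_\Omega f(x,u)u\d x$, whence $[\eta_p(u_j)]^p\to[\eta_p(u)]^p$.

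The decisive step is to upgrade this to strong convergence, and here the sign-changing measure is the main obstacle: the energy norm $\eta_p$ built from $\mu=\mu^+-\mu^-$ need not be uniformly convex, so I cannot invoke the Radon--Riesz property directly (the uniformly convex norm is $\rho_p$, not $\eta_p$). Instead I would apply the Brezis--Lieb identity of Lemma~\ref{B-L lemma} to $\mu^+$ and to $\mu^-$ separately and subtract the two resulting limits, obtaining $[\eta_p(u_j)]^p-[\eta_p(u_j-u)]^p\to[\eta_p(u)]^p$. Comparing this with $[\eta_p(u_j)]^p\to[\eta_p(u)]^p$ from the previous step forces $[\eta_p(u_j-u)]^p\to0$, and the norm equivalence of Lemma~\ref{lmn2.8} then gives $\|u_j-u\|_{X_p(\Omega)}\to0$, establishing $\mathrm{(PS)}$ at every level $c$. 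The smallness of $\kappa$ enters precisely in guaranteeing, through Lemma~\ref{reabsorb}, that $\eta_p$ is a genuine equivalent norm, which is what lets the negative $\mu^-$ contribution be absorbed at both the boundedness and the strong-convergence stages.
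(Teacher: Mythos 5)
Your proposal is correct and follows essentially the same route as the paper: the same combination $\mathcal{I}(u_j)-\tfrac1\mu\langle\mathcal{I}'(u_j),u_j\rangle$ with the AR condition for boundedness, and then weak convergence, the compact embedding, testing with $u_j$ and $u$ to get $[\eta_p(u_j)]^p\to[\eta_p(u)]^p$, and finally the Brezis--Lieb identity of Lemma~\ref{B-L lemma} to upgrade to strong convergence. The only cosmetic difference is that you bound the integrand on $\{|u_j|<r\}$ by continuity of $f$ on a compact set, whereas the paper uses the growth bound \eqref{f2}; both work.
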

\begin{proof}
    Let $(u_{n}) \subset X_{p}(\Omega)$ be a $\mathrm{PS}$ sequence at $c\in\mathbb{R}$ for the energy functional $\mathcal{I}$. Thus, we get 
\begin{align}\label{level c}
    \lim _{n \rightarrow\infty} \mathcal{I}\left(u_{n}\right) =&\lim _{n \rightarrow\infty} \Bigg[\frac{1}{p}\left[\rho_{p}\left(u_{n}\right)\right]^{p}-\frac{1}{p} \int_{[0, \bar{s}]}\left[u_{n}\right]_{s, p}^{p} \d \mu^{-}(s) -\int_\Omega\int_0^{u_n}f(x,\tau)\d \tau \d x \Bigg]=c
\end{align}
and
\begin{align}\label{derivative}
 \lim _{n \rightarrow\infty}& \sup_{v \in X_p(\Omega)} \langle \mathcal{I}^{\prime}\left(u_{n}\right), v\rangle \nonumber \\
=&  \lim _{n \rightarrow\infty} \sup_{v \in X_p(\Omega)} \Bigg[ \int_{[0,1]}\Bigg( \iint_{\mathbb{R}^{2 N}}  \frac{\splitfrac{C_{N, s, p}\left|u_{n}(x)-u_{n}(y)\right|^{p-2}\left(u_{n}(x)-u_{n}(y)\right)}{\times(v(x)-v(y))}}{|x-y|^{N+s p}}\d x \d y\Bigg) \d \mu^{+}(s) \nonumber\\ 
& -\int_{[0, \bar{s}]}\Bigg( \iint_{\mathbb{R}^{2 N}} \frac{\splitfrac{C_{N, s, p}\left|u_{n}(x)-u_{n}(y)\right|^{p-2}\left(u_{n}(x)-u_{n}(y)\right)}{\times(v(x)-v(y))}}{|x-y|^{N+s p}} \d x \d y\Bigg) \d \mu^{-}(s)\nonumber \\
& -\int_{\Omega}f(x,u_n) v \d x \Bigg]=0. 
\end{align}
We will prove this lemma by establishing the following two claims.\\

\noindent \textbf{Claim 1:} First, we will claim that $(u_n)$ is bounded in $X_p(\Omega)$.\\
Using \eqref{level c} and \eqref{derivative} with the test function $v=\frac{u_n}{[\eta_p(u_n)]}$ for any $n\in\mathbb{N}$, there exists a constant $C>0$ such that\begin{align}\label{eq3.3}    |\mathcal{I}(u_n)|\leq C,~\text{and}\nonumber\\    \bigg|\langle \mathcal{I}'(u_n),\frac{u_n}{\eta_p(u_n)}\rangle \bigg|\leq C.\end{align}
From \eqref{eq3.3}, we get
\begin{align}\label{eq3.4}
    \mathcal{I}(u_n)-\frac{1}{\mu}\langle \mathcal{I}^\prime(u_n),u_n\rangle\leq C\bigg(1+\frac{\eta_p(u_n)}{\mu}\bigg)\leq C\left(1+\eta_p(u_n)\right),
\end{align}
where $\mu$ is the constant in \eqref{f4}. On integrating both sides of \eqref{f2}, we derive
\begin{equation}\label{eq3.5}
    |F(x,t)|\leq a_1|t|+\frac{a_2}{q}|t|^q,~\forall~ x\in \bar{\Omega} ~\text{and}~\forall ~x\in\mathbb{R}.
\end{equation}
Using \eqref{f2} and \eqref{eq3.5}, we obtain
\begin{align}\label{eq3.6}
   & \bigg| \int_{\Omega \cap \{|u_n|\leq r\}} \left(F(x,u_n(x))-\frac{1}{\mu}f(x,u_n(x))u_n\right) \d x \bigg|\nonumber\\ 
    \leq & \int_{\Omega \cap \{|u_n|\leq r\}} \bigg( a_1 |u_n|+\frac{a_2}{q}|u_n|^q +\frac{a_1}{\mu}|u_n| +\frac{a_2}{\mu} |u_n|^{q}\bigg) \d x \nonumber\\
    \leq & \bigg( a_1 r+\frac{a_2}{q}r^q +\frac{a_1}{\mu}r +\frac{a_2}{\mu} r^{q}\bigg)|\Omega|:=C_1,
\end{align}
where $\mu$ is the constant in \eqref{f4}. Consequently, applying \eqref{f4} and \eqref{eq3.6}, we derive
\begin{align}\label{eq3.7}
&\mathcal{I}(u_n)-\frac{1}{\mu}\langle \mathcal{I}^\prime(u_n),u_n\rangle \nonumber\\
=& \left( \frac{1}{p}-\frac{1}{\mu}\right)[\eta_p(u_n)]^p-\int_\Omega \bigg(  F(x,u_n(x))-\frac{1}{\mu}f(x,u_n(x))u_n\bigg) \d x  \nonumber \\
\geq &\left( \frac{1}{p}-\frac{1}{\mu}\right)[\eta_p(u_n)]^p- \int_{\Omega \cap \{|u_n|\leq r\}} \left(F(x,u_n(x))-\frac{1}{\mu}f(x,u_n(x))u_n\right) \d x \nonumber \\
\geq & \left( \frac{1}{p}-\frac{1}{\mu}\right)[\eta_p(u_n)]^p-C_1.
\end{align}
Therefore, using \eqref{eq3.4} and \eqref{eq3.7}, we get
\begin{align}\label{eq3.8}
    \left( \frac{1}{p}-\frac{1}{\mu}\right)[\eta_p(u_n)]^p-C_1 \leq C\bigg(1+\frac{[\eta_p(u_n)]}{\mu}\bigg).
\end{align}
Therefore, \eqref{eq3.8} gives us $\eta_{p}\left(u_{n}\right)$ is bounded for all $n\in \mathbb{N}$ using the fact that $\mu>p$.\\

\textbf{Claim 2:} Next, we claim that $u_{n} \rightarrow u$ strongly in $X_{p}(\Omega)$ up to a subsequence.\\
Applying Lemma \ref{Uniform convexity} and Proposition \ref{compact and cont embedding}, there exists $u \in X_{p}(\Omega)$ and a subsequence of $(u_n)$, still denoted by $(u_n)$ such that
\begin{align}\label{convergences}
\begin{split}
    & u_{n} \rightharpoonup u \text { in } X_{p}(\Omega), \\
& u_{n} \rightarrow u \text { in } L^{r}(\Omega) \text { for any } r \in [1, p_{s_{\sharp}}^{*}),  \\
& u_{n} \rightarrow u \text { a.e. in } \Omega,
\end{split}
\end{align}
that is, for each $v\in X_p(\Omega)$, we get
\begin{align}\label{eq3.10}
    \lim _{n \rightarrow\infty} &\int_{[0,1]}\left(\iint_{\mathbb{R}^{2 N}} \frac{\left|u_{n}(x)-u_{n}(y)\right|^{p-2}\left(u_{n}(x)-u_{n}(y)\right)(v(x)-v(y))}{|x-y|^{N+s p}} \d x \d y\right) \d \mu^{ \pm}(s)\nonumber \\
&=\int_{[0,1]}\left(\iint_{\mathbb{R}^{2 N}} \frac{|u(x)-u(y)|^{p-2}(u(x)-u(y))(v(x)-v(y))}{|x-y|^{N+s p}} \d x \d y\right) \d \mu^{ \pm}(s).
\end{align}
Moreover, using \eqref{convergences}, there exists $ l \in L^r(\Omega) $ such that
\begin{align}\label{eq3.111}
    |u_n(x)| \leq l(x),~a.e~\text{in}~\Omega,~\forall~n\in \mathbb{N}.
\end{align}
  Again, using \eqref{f1} \eqref{f2}, \eqref{convergences}, \eqref{eq3.10}, \eqref{eq3.111} and Dominated Converges Theorem, we obtain
 \begin{align}\label{eq3.11}
     \lim_{n\rightarrow \infty}\int_\Omega f(x,u_n)u_n \d x = \int_\Omega f(x,u)u \d x
 \end{align}
 and 
  \begin{align}\label{eq3.12}
     \lim_{n\rightarrow \infty}\int_\Omega f(x,u_n)u \d x = \int_\Omega f(x,u)u \d x.
 \end{align}
 Now substituting $v:= \pm u_{n}$ into \eqref{derivative}, we get
\begin{align}\label{new sg}
     \lim _{n \rightarrow\infty} &\langle \mathcal{I}^{\prime}\left(u_{n}\right), u_n\rangle\nonumber \\
     &=\lim _{n \rightarrow\infty} \left(\left[\rho_{p}(u_n)\right]^{p}-\int_{[0, \bar{s}]}[u_n]_{s, p}^{p} \d \mu^{-}(s)-\int_\Omega f(x,u_n)u_n\right)=0.
     \end{align}
Consequently, by employing \eqref{eq3.11} and \eqref{new sg}, we deduce
 \begin{align}\label{eq3.15}
     \lim _{n \rightarrow\infty} \left(\left[\rho_{p}(u_n)\right]^{p}-\int_{[0, \bar{s}]}[u_n]_{s, p}^{p} \d \mu^{-}(s)\right)=\int_\Omega f(x,u)u.
 \end{align}
 Again, substituting $v:= \pm u$ into \eqref{derivative}, we get
 \begin{align}\label{eq3.16}
     \lim _{n \rightarrow\infty} &\langle \mathcal{I}^{\prime}\left(u_{n}\right), u\rangle\nonumber \\
     =& \lim_{n\rightarrow \infty}\int_{[0,1]}\left(\iint_{\mathbb{R}^{2 N}}  \frac{C_{N, s, p}|u_n(x)-u_n(y)|^{p-2}(u_n(x)-u_n(y))(u(x)-u(y))}{|x-y|^{N+s p}} \d x \d y\right) \d \mu^{+}(s) \nonumber \\
& -\int_{[0, \bar{s}]}\left(\iint_{\mathbb{R}^{2 N}} \frac{C_{N, s, p}|u_n(x)-u_n(y)|^{p-2}(u_n(x)-u_n(y))(u(x)-u(y))}{|x-y|^{N+s p}} \d x \d y\right) \d \mu^{-}(s) \nonumber \\
& -\int_{\Omega}f(x,u_n) u \d x=0.
 \end{align}
 Consequently, by using \eqref{eq3.10} with $v=u$ and \eqref{eq3.12} in \eqref{eq3.16}, we deduce
 \begin{align}\label{eq3.17}
\lim _{n \rightarrow\infty} \left(\left[\rho_{p}(u)\right]^{p}-\int_{[0, \bar{s}]}[u]_{s, p}^{p} \d \mu^{-}(s)\right)=\int_\Omega f(x,u)u.
 \end{align}
Therefore, using \eqref{eq3.15} and \eqref{eq3.17}, we conclude 
\begin{align}\label{eq3.18}
    \lim_{n\rightarrow \infty}[\eta_p(u_n)]=[\eta_p(u)].
\end{align}
 % and 
 % \begin{align}\label{eq3.19}
 %     \lim_{n \rightarrow \infty} \int_{[0, \bar{s}]}[u_n]_{s, p}^{p} \d \mu^{-}(s)= \int_{[0, \bar{s}]}[u]_{s, p}^{p} \d \mu^{-}(s) \textcolor{red}{ negative part can i remove?}
 % \end{align}
 Thanks to Lemma \ref{B-L lemma} and \eqref{eq3.18}, we obtain
 %implies that
%\begin{equation}\label{ex eq 1}
%\int_{[0,1]}[u]_{s, p}^{p} \d \mu^{+}(s)=\lim _{n \rightarrow\infty} \left(\int_{[0,1]}\left[u_{n}\right]_{s, p}^{p} \d \mu^{ +}(s)-\int_{[0,1]}\left[u_{n}-u\right]_{s, p}^{p} \d \mu^{ +}(s)\right).
%\end{equation}
%By using \eqref{eq3.18} in \eqref{ex eq 1}, we obtain
\begin{align}\label{eq3.21}
    \int_{[0,1]}\left[\eta(u_{n}-u)\right]^{p} \d \mu^{ +}(s)=0.
\end{align}
This completes the proof.
\end{proof}

Now, to deal with the superlinear nonlinearity without the AR condition but satisfying \eqref{f6} and \eqref{f7}, we will prove the Cerami condition for the energy functional $\mathcal{I}$. Prior to establishing this, note that, by employing \eqref{f3} and \eqref{f7}, there exists a constant $\alpha \geq 1$ such that
\begin{align}\label{eq3.22} 
  \mathcal{F}(x,t') \leq \alpha \mathcal{F}(x,t), \text{ for all } x \in \Omega  \text{ and } t, t' \in \mathbb{R} \text{ with } 0 < |t'| \leq |t|. 
  \end{align}

\begin{lem}\label{lmn3.2}
       Assume that $f:\bar{\Omega}\times \mathbb{R}\rightarrow \mathbb{R}$ satisfies the conditions \eqref{f1}, \eqref{f2}, \eqref{f3}, \eqref{f6} and \eqref{f7}. Then $\mathcal{I}$ holds the Cerami condition $\mathrm{(Ce)}$ at any level $c\in \mathbb{R}$.
\end{lem}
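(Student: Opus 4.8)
The plan is to follow the two-claim architecture of the proof of Lemma~\ref{lmn3.1}: first establish that any Cerami sequence $(u_n)$ is bounded in $X_p(\Omega)$, and then upgrade weak convergence to strong convergence. For the second claim I would argue essentially verbatim as in Lemma~\ref{lmn3.1}: boundedness together with Proposition~\ref{compact and cont embedding} and Lemma~\ref{Uniform convexity} yields $u_n \rightharpoonup u$ in $X_p(\Omega)$, $u_n \to u$ in $L^r(\Omega)$ for $r \in [1, p^*_{s_\sharp})$ and a.e.; the growth bound \eqref{f2} with the dominated convergence theorem gives $\int_\Omega f(x,u_n)u_n \to \int_\Omega f(x,u)u$ and $\int_\Omega f(x,u_n)u \to \int_\Omega f(x,u)u$; testing $\mathcal{I}'(u_n)$ against $u_n$ and against $u$, combined with Lemma~\ref{weak convergence}, forces $\eta_p(u_n) \to \eta_p(u)$, whence Lemma~\ref{B-L lemma} delivers $u_n \to u$ strongly. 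Throughout I would exploit the equivalent norm $\eta_p$ of Lemma~\ref{lmn2.8} to write the functional compactly as $\mathcal{I}(u) = \tfrac{1}{p}[\eta_p(u)]^p - \int_\Omega F(x,u)\,\d x$ and $\langle \mathcal{I}'(u),u\rangle = [\eta_p(u)]^p - \int_\Omega f(x,u)u\,\d x$, so that the signed-measure structure of the operator is entirely absorbed into a genuine norm.

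The crux, and the step I expect to be the main obstacle, is the boundedness of the Cerami sequence without the Ambrosetti--Rabinowitz condition. I would argue by contradiction, assuming $\eta_p(u_n) \to \infty$, and set $w_n := u_n/\eta_p(u_n)$, so $\eta_p(w_n) = 1$; passing to a subsequence, $w_n \rightharpoonup w$ in $X_p(\Omega)$, $w_n \to w$ in $L^p(\Omega)$ and a.e. If $w \neq 0$ on a set of positive measure, then $|u_n(x)| \to \infty$ there, and \eqref{f6} in the quantitative form (for every $A>0$ there is $C_A$ with $F(x,t) \geq A|t|^p - C_A$), together with Fatou's lemma, gives $\liminf_n \int_\Omega F(x,u_n)\,\d x \big/ [\eta_p(u_n)]^p \geq A \int_\Omega |w|^p\,\d x$ for every $A$, so this ratio diverges; but $\mathcal{I}(u_n)/[\eta_p(u_n)]^p = \tfrac1p - \int_\Omega F(x,u_n)\,\d x/[\eta_p(u_n)]^p \to 0$ forces the ratio to equal $\tfrac1p$, a contradiction.

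The remaining case $w = 0$ is where Jeanjean's condition \eqref{f7} enters, via the monotonicity trick. For fixed $R>0$ set $v_n := (pR)^{1/p} w_n$; since $\eta_p(v_n) = (pR)^{1/p}$ is fixed while $v_n \to 0$ in $L^q(\Omega)$, the growth bound \eqref{f2} gives $\int_\Omega F(x,v_n)\,\d x \to 0$, hence $\mathcal{I}(v_n) \to R$. Choosing $t_n \in [0,1]$ with $\mathcal{I}(t_n u_n) = \max_{t \in [0,1]} \mathcal{I}(t u_n)$ and noting that $v_n = (pR)^{1/p}\eta_p(u_n)^{-1}u_n$ with scalar in $(0,1)$ for large $n$, maximality yields $\mathcal{I}(t_n u_n) \geq \mathcal{I}(v_n) \to R$; as $R$ is arbitrary, $\mathcal{I}(t_n u_n) \to +\infty$. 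Since $\mathcal{I}(0)=0$ and $\mathcal{I}(u_n) \to c$, the maximizer is interior for large $n$, $t_n \in (0,1)$, so $\langle \mathcal{I}'(t_n u_n), t_n u_n\rangle = 0$ and therefore $\mathcal{I}(t_n u_n) = \int_\Omega \mathcal{F}(x,t_n u_n)\,\d x$.

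Finally, because $|t_n u_n| \leq |u_n|$, the symmetric form \eqref{eq3.22} of \eqref{f7} gives $\int_\Omega \mathcal{F}(x,t_n u_n)\,\d x \leq \alpha \int_\Omega \mathcal{F}(x,u_n)\,\d x = \alpha\big(\mathcal{I}(u_n) - \tfrac1p\langle \mathcal{I}'(u_n),u_n\rangle\big)$. Here the Cerami scaling is decisive: from $(1+\eta_p(u_n))\,\mathcal{I}'(u_n) \to 0$ one gets $|\langle \mathcal{I}'(u_n),u_n\rangle| \leq \|\mathcal{I}'(u_n)\|_\ast\,\eta_p(u_n) \to 0$, so the right-hand side converges to $\alpha c$ and stays bounded, contradicting $\mathcal{I}(t_n u_n) \to +\infty$. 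This contradiction in both cases establishes boundedness. It is precisely this interplay between \eqref{f6}, \eqref{f7} and the Cerami (rather than Palais--Smale) scaling that replaces the role of the AR condition used in Lemma~\ref{lmn3.1}; the genuine difficulty is the careful construction of the rescaled sequence $v_n$ and the verification that $t_n$ is an interior maximizer, since only then does the identity $\mathcal{I}(t_n u_n)=\int_\Omega \mathcal{F}(x,t_n u_n)\,\d x$ become available for comparison through \eqref{eq3.22}.
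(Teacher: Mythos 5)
Your proposal is correct and follows essentially the same route as the paper's proof: contradiction via the rescaled sequence $u_n/\eta_p(u_n)$, Fatou's lemma with \eqref{f6} when the weak limit is nonzero, and the interior-maximizer argument combined with \eqref{eq3.22} and the Cerami scaling of $\langle \mathcal{I}'(u_n),u_n\rangle$ when the weak limit vanishes, followed by the strong-convergence step borrowed from Lemma~\ref{lmn3.1}. The only cosmetic differences are the order of the two cases and your use of the uniform bound $F(x,t)\geq A|t|^p-C_A$ in place of the paper's splitting of $\Omega$ into $\Omega_1$ and its complement.
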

\begin{proof}
Let $(u_{n}) \subset X_{p}(\Omega)$ be a $\mathrm{Ce}$ sequence at $c\in\mathbb{R}$ for the energy functional $\mathcal{I}$. Then, we get 
   
\begin{align}\label{c2}
\begin{split}
    \lim _{n \rightarrow\infty} \mathcal{I}\left(u_{n}\right) =&\lim _{n \rightarrow\infty} \Bigg[\frac{1}{p}\left[\rho_{p}\left(u_{n}\right)\right]^{p}-\frac{1}{p} \int_{[0, \bar{s}]}\left[u_{n}\right]_{s, p}^{p} \d \mu^{-}(s) -\int_\Omega\int_0^{u_n}f(x,\tau)\d \tau \d x \Bigg]=c
\end{split}
\end{align}
and
\begin{align}\label{d2}
& \lim _{n \rightarrow\infty} \left(1+[\eta_p(u_n)]\right) \sup_{v \in X_p(\Omega)} \langle \mathcal{I}^{\prime}\left(u_{n}\right), v\rangle = \lim _{n \rightarrow\infty} \left(1+[\eta_p(u_n)]\right) \times \nonumber\\
& \sup_{v \in X_p(\Omega)} \Bigg[ \int_{[0,1]}\Bigg( \iint_{\mathbb{R}^{2 N}}  \frac{\splitfrac{C_{N, s, p}\left|u_{n}(x)-u_{n}(y)\right|^{p-2}\left(u_{n}(x)-u_{n}(y)\right)}{\times(v(x)-v(y))}}{|x-y|^{N+s p}}\d x \d y\Bigg) \d \mu^{+}(s) \nonumber\\ 
& -\int_{[0, \bar{s}]}\Bigg( \iint_{\mathbb{R}^{2 N}} \frac{\splitfrac{C_{N, s, p}\left|u_{n}(x)-u_{n}(y)\right|^{p-2}\left(u_{n}(x)-u_{n}(y)\right)}{\times(v(x)-v(y))}}{|x-y|^{N+s p}} \d x \d y\Bigg) \d \mu^{-}(s)\nonumber \\
& -\int_{\Omega}f(x,u_n) v \d x \Bigg]=0. 
\end{align}
We will prove it in two steps.\\

\textbf{Step 1:-} At first, we claim that $(u_n)$ is bounded in $X_p(\Omega)$. We will show it by the method of contradiction.  Let $(u_n)$ be unbounded in $X_p(\Omega)$; then there exists a subsequence (also denoted by $u_n$) such that 
\begin{align}\label{eq3.25} 
\lim_{n\rightarrow \infty} [\eta_p(u_n)] = \infty. 
\end{align}
Using \eqref{d2} and \eqref{eq3.25}, we obtain
\begin{align}\label{eq3.26}
    \lim _{n \rightarrow\infty} \sup_{v \in X_p(\Omega)} \langle \mathcal{I}^{\prime}\left(u_{n}\right), v\rangle=0.
\end{align}
Now for each $n\in \mathbb{N}$, we define 
\begin{align}\label{eq3}
    v_n=\frac{u_n}{[\eta_p(u_n)]}.
    \end{align}
    Clearly, $v_n$ is bounded in $X_p(\Omega)$. Thus, using Lemma \ref{Uniform convexity} and Proposition \ref{compact and cont embedding}, there exists $v \in X_{p}(\Omega)$ and a subsequence of $(v_n)$, still denoted by $(v_n)$ such that
\begin{align}\label{eq3.27}
\begin{split}
    & v_{n} \rightharpoonup v \text { in } X_{p}(\Omega) \\
& v_{n} \rightarrow v \text { in } L^{r}(\Omega) \text { for any } r \in [1, p_{s_{\sharp}}^{*}),  \\
& v_{n} \rightarrow v \text { a.e. in } \Omega.
\end{split}
\end{align}
Moreover, using \eqref{eq3.27} there exists $l\in L^r(\Omega)$ such that \begin{align}\label{eq3.28}
    |v_n(x)|\leq l(x)\text{ a.e. in }~\Omega,~ \forall ~n\in \mathbb{N}.
\end{align} 

 We will consider two cases: first, when $v = 0$, and second, when $v \neq 0$. In both cases, we will derive a contradiction. We begin with the following scenario.\\

\noindent \textbf{ Case I (when $v=0$):} First, we suppose that $ v = 0 $. For any $n\in \mathbb{N}$, there exists $t_n\in[0,1]$ such that
\begin{align}\label{eq3.29}
    \mathcal{I}(t_nu_n)=\max_{t\in[0,1]} \mathcal{I}(tu_n).
\end{align}
For any $m\in \mathbb{N}$, there exists $r_m$ (we choose $r_m={(2pm)}^\frac{1}{p}$) such that
\begin{align}\label{eq3.30}
    \frac{r_m}{[\eta_p(u_n)]} \in (0,1),~\text{ for large } n,~ (\text{where } n>\bar{n}(p,m)).
\end{align}
Again, for each $ m \in \mathbb{N} $, using \eqref{eq3.27} and the continuity of $ F $, we derive
\begin{align}\label{eq3.31}
    \lim_{n\rightarrow \infty} F(x,r_mv_n(x))=F(x,r_mv(x))=0, \text{ a.e. }x\in \Omega.
\end{align}
Moreover, for every $ m, n \in \mathbb{N}$, integrating both sides of \eqref{f2} and employing \eqref{eq3.28}, we obtain
\begin{align}\label{eq3.32}
    |F(x,r_mv_n(x))|\leq & a_1|r_mv_n(x)|+\frac{a_2}{q} |r_mv_n(x)|^q \nonumber \\
     \leq &  a_1r_ml(x)+\frac{a_2}{q} (r_ml(x))^q \in L^1(\Omega), \text{ a.e. }x\in \Omega.
\end{align}
Therefore, using \eqref{eq3.31}, \eqref{eq3.32} and Dominated Convergence Theorem, we get
\begin{align}\label{eq3.33}
    \lim_{n\rightarrow \infty}\int_\Omega F(x,r_mv_n(x)) \d x =0, \text{ for each } m\in \mathbb{N}.
\end{align}
Thus, for each $m\in \mathbb{N}$, using \eqref{eq3.29}, \eqref{eq3.30} and \eqref{eq3.33}, we obtain
\begin{align}\label{eq3.34}
    \mathcal{I}(t_nu_n)\geq & \mathcal{I}\bigg(\frac{r_m}{[\eta_p(u_n)]}u_n\bigg)=\mathcal{I}(r_mv_n)\nonumber \\
    =& \frac{1}{p} [\eta_p(r_mv_n)]^p- \int_\Omega F(x,r_mv_n(x)) \d x \nonumber \\
    = & 2m - \int_\Omega F(x,r_mv_n(x)) \d x \geq m, \text{ for large } n.
\end{align}
From \eqref{eq3.34}, we get 
\begin{align}\label{eq3.35}
    \lim_{n\rightarrow \infty}\mathcal{I}(t_nu_n)=\infty.
\end{align}
Since $\mathcal{I}(0u_n)=\mathcal{I}(0)=0 $ and $\lim_{n\rightarrow \infty}\mathcal{I}(1u_n)=\lim_{n\rightarrow \infty}\mathcal{I}(u_n)=c$. Applying these facts with \eqref{eq3.29} and \eqref{eq3.35}, we can clearly see that $t_n\in(0,1)$ such that
$$\frac{\d}{\d t}\bigg|_{t=t_n} \mathcal{I}(tu_n)=0, \text{ for each } n\in \mathbb{N}.$$
Moreover, 
\begin{align}\label{eq3.36} 
\langle \mathcal{I}^\prime(t_nu_n),t_nu_n \rangle =t_n \frac{\d}{\d t}\bigg|_{t=t_n} \mathcal{I}(tu_n)=0.
\end{align}

To arrive at a contradiction, we will next  show that, for some suitable constant $C_2\geq 0$, we obtain
\begin{align}\label{eq3.37}
\limsup_{n\rightarrow \infty} \mathcal{I}(t_nu_n)\leq C_2,
\end{align}
which is a contradiction to \eqref{eq3.35}.\\
Using \eqref{eq3.22}, \eqref{c2}, \eqref{eq3.26} and \eqref{eq3.36}, we derive
\begin{align}
    \frac{1}{\alpha} \mathcal{I}(t_nu_n)=& \frac{1}{\alpha} \bigg( \mathcal{I}(t_nu_n)-\frac{1}{p} \langle \mathcal{I}^\prime(t_nu_n),t_nu_n \rangle \bigg)\nonumber \\
    =& \frac{1}{\alpha} \bigg( -\int_\Omega  F(x,t_n u_n(x)) \d x+\frac{1}{p}\int_\Omega f(x,t_n u_n(x))t_n u_n(x) \d x \bigg) \nonumber \\
    = & \frac{1}{\alpha} \int_\Omega \mathcal{F}(x,t_nu_n(x))\d x \nonumber \\
    \leq & \int_\Omega \mathcal{F}(x,u_n(x))\d x \nonumber \\
    = & \int_\Omega \bigg\{ \frac{1}{p} u_n(x) f(x, u_n(x)) -F(x,u_n(x)) \bigg\} \d x \nonumber \\
    =& \mathcal{I}(u_n) -\frac{1}{p} \langle \mathcal{I}^\prime(u_n), u_n \rangle \rightarrow c, \text{ as } n\rightarrow \infty. \nonumber
\end{align}
This implies that to \eqref{eq3.37}. Thus, we get a contradiction to \eqref{eq3.35} implying that our assumption that $(u_n)$ is unbounded in $X_p(\Omega)$ is not correct. Therefore, $(u_n)$ is bounded in $X_p(\Omega).$  \\

\noindent \textbf{ Case II (when $v\neq 0$):} Let us assume that $v\neq 0$. Then $$m(\Omega_1):=m\{x\in \Omega: v(x)\neq 0\}>0,$$ where $m$ is denoted for Lebesgue measure. Using \eqref{eq3} and \eqref{eq3.27}, we obtain\begin{align}\label{eq3.39}
    \lim_{n\rightarrow \infty} |u_n(x)|= \infty, \text{ a.e. } x\in \Omega_1.
\end{align}
Applying \eqref{c2} and \eqref{eq3.25}, we get
\begin{align}\label{eq3.40}
    \lim_{n\rightarrow \infty} \frac{\mathcal{I}(u_n)}{[\eta_p(u_n)]^p}=\lim_{n\rightarrow \infty}\bigg[\frac{1}{p}-\int_{\Omega_1}\frac{F(x,u_n(x))}{[\eta_p(u_n)]^p}\d x- \int_{\Omega\setminus \Omega_1}\frac{F(x,u_n(x))}{[\eta_p(u_n)]^p}\d x\bigg]=0.
\end{align}
Let us consider \eqref{eq3.40} into two parts separately.\\

\noindent \textbf{Part I (For the first integral of \eqref{eq3.40}):}
Using \eqref{f6}, \eqref{eq3.27}, and \eqref{eq3.39}, we derive
\begin{align}
    \lim_{n\rightarrow \infty} \frac{F(x,u_n(x))}{[\eta_p(u_n)]^p}\d x=& \lim_{n\rightarrow \infty} \frac{F(x,u_n(x))}{|u_n|^p} \frac{|u_n|^p}{[\eta_p(u_n)]^p} \nonumber \\
    =&\lim_{n\rightarrow \infty} \frac{F(x,u_n(x))}{|u_n|^p} |v_n(x)|^p=\infty, \text{ a.e. for any } x\in \Omega_1. \nonumber
\end{align}
Hence, by Fatou's Lemma, we obtain 
\begin{align}\label{eq3.41}
 \lim_{n\rightarrow \infty} \int_{\Omega_1}\frac{F(x,u_n(x))}{[\eta_p(u_n)]^p}\d x =\infty.
\end{align}

\noindent \textbf{Part II (For the second integral of \eqref{eq3.40}):} We will claim that there exists a positive constant, $C_3$, such that
\begin{align}\label{eq3.42}    
\int_{\Omega\setminus \Omega_1}\frac{F(x,u_n(x))} {[\eta_p(u_n)] ^p}\d x \geq -\frac{C_3}{[\eta_p(u_n)]^p}|\Omega\setminus \Omega_1|,
\end{align}
that is, using \eqref{eq3.25}, \eqref{eq3.40}, \eqref{eq3.41} and \eqref{eq3.42}, we obtain
\begin{align}\label{eq3.47}
    \lim_{n \rightarrow \infty} \int_{\Omega\setminus \Omega_1}\frac{F(x,u_n(x))}{[\eta_p(u_n)]^p}\d x \geq 0.
\end{align}
By \eqref{f6}, we obtain 
\begin{align}\label{eq3.43}
    \lim_{|t|\rightarrow \infty} F(x,t)=\infty, \text{ uniformly for each } x\in \bar{\Omega}.
\end{align}
From \eqref{eq3.43}, there exist positive constants $t_1$ and $C_4$ such that
\begin{align}\label{eq3.44}
    F(x,t)\geq C_4,~ \forall~x\in\bar{\Omega} \text{ and } |t|>t_1.
\end{align}
We know, $F$ is continuous in $\bar{\Omega}\times \mathbb{R}$, then
\begin{align}\label{eq3.45}
    F(x,t) \geq \min_{(x,t)\in \bar{\Omega}\times [-t_1,t_1]} F(x,t),~ \forall~x\in\bar{\Omega} \text{ and } |t|\leq t_1.
\end{align}
Note that $\min_{(x,t)\in \bar{\Omega}\times [-t_1,t_1]} F(x,t)\leq 0$, since $F(x,0)=0, \forall~ x\in \bar{\Omega}$.
Thus, using \eqref{eq3.44} and \eqref{eq3.45}, there exists $C_3\geq 0$ such that
\begin{align}\label{eq3.46}
    F(x,t)\geq -C_3, \text{ for each }(x,t)\in \bar{\Omega}\times \mathbb{R}. 
\end{align}
Applying \eqref{eq3.46}, we conclude that \eqref{eq3.42}.
 So, using \eqref{eq3.41} and \eqref{eq3.47} in \eqref{eq3.40} yields a contradiction.  Therefore  the sequence $(u_n)$ is bounded in $X_p(\Omega)$ in this case as well.\\
 
\textbf{Step 2:} Employing a similar argument from \textbf{Claim 2} in Lemma \ref{lmn3.1}, we deduce that $u_{n} \rightarrow u$ strongly in $X_{p}(\Omega)$, up to a subsequence. This completes the proof.
\end{proof}
\begin{rem}
    Note that the condition \eqref{f7} was essential for establishing the inequality \eqref{eq3.37} in the proof of Lemma \ref{lmn3.2}.
\end{rem}
To show the Cerami condition in the framework of Theorem \ref{t3}, we require the following significant result from \cite{L2010}.
\begin{lem} \cite[Lemma 2.3]{L2010}\label{lmn3.4}
Assume that \eqref{f9} holds. Then the function $\mathcal{F}(x,t)$ is increasing in $t\geq t''$ and decreasing in $t\leq -t''$ for all $x\in \Omega$, where $\mathcal{F}=\frac{1}{p}tf(x,t)-F(x,t)$. In particular, for any $x\in\Omega$, there exists $C_5>0$ such that 
    \begin{align}\nonumber
        \mathcal{F}(x,s)\leq \mathcal{F}(x,t)+C_5, \text{ if } ~0\leq s\leq t \text{ or } t\leq s \leq 0.
    \end{align} 
\end{lem}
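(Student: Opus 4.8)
The plan is to avoid differentiating $\mathcal{F}$ altogether---since $f$ is only continuous by \eqref{f1}, the map $\mathcal{F}(x,\cdot)$ need not be $C^1$---and instead to read off the monotonicity directly from \eqref{f9} through the integral representation $F(x,t)=\int_0^t f(x,\tau)\,\d\tau$. Fixing $x$ and writing $g=g(x,\cdot)$ as in \eqref{f9}, so that $f(x,\tau)=g(\tau)\,|\tau|^{p-2}\tau$, I would first record the identity $\tau f(x,\tau)=g(\tau)|\tau|^{p}$, which for $\tau\ge t''>0$ reads $\tau f(x,\tau)=g(\tau)\tau^{p}$. Then for $t''\le s\le t$ a direct computation gives
\begin{align*}
\mathcal{F}(x,t)-\mathcal{F}(x,s)
=\frac{1}{p}\big[g(t)\,t^{p}-g(s)\,s^{p}\big]-\int_{s}^{t} g(\tau)\,\tau^{p-1}\,\d\tau .
\end{align*}

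The key step is to estimate the integral using that $g$ is nondecreasing on $[t'',\infty)$: for $\tau\in[s,t]$ we have $g(\tau)\le g(t)$, hence
\begin{align*}
\int_{s}^{t} g(\tau)\,\tau^{p-1}\,\d\tau \le g(t)\int_{s}^{t}\tau^{p-1}\,\d\tau = g(t)\,\frac{t^{p}-s^{p}}{p}.
\end{align*}
Substituting this bound and cancelling the $g(t)t^{p}$ terms yields
\begin{align*}
\mathcal{F}(x,t)-\mathcal{F}(x,s)\ \ge\ \frac{s^{p}}{p}\big[g(t)-g(s)\big]\ \ge\ 0,
\end{align*}
because $g(t)\ge g(s)$. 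Thus $\mathcal{F}(x,\cdot)$ is increasing on $[t'',\infty)$. For the decreasing behaviour on $(-\infty,-t'']$ I would either repeat the symmetric computation (using $\tau f(x,\tau)=g(\tau)|\tau|^{p}$ for $\tau<0$ together with $g$ decreasing there), or, more cheaply, invoke \eqref{f3}: since $f(x,\cdot)$ is odd, $F(x,\cdot)$ is even and therefore $\mathcal{F}(x,-t)=\mathcal{F}(x,t)$, so the decreasing statement on the negative axis is exactly the reflection of the increasing statement already proved.

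For the ``in particular'' inequality I would set $M:=\sup\{\,|\mathcal{F}(x,\tau)|:x\in\bar\Omega,\ |\tau|\le t''\,\}$, which is finite since $\mathcal{F}$ is continuous on the compact set $\bar\Omega\times[-t'',t'']$, and claim $C_5=2M$ suffices. Given $0\le s\le t$, I would split into three cases: if $t\le t''$ then $s,t\in[0,t'']$, so $\mathcal{F}(x,s)\le M$ and $\mathcal{F}(x,t)\ge -M$; if $s\le t''\le t$ then $\mathcal{F}(x,s)\le M$ while monotonicity gives $\mathcal{F}(x,t)\ge\mathcal{F}(x,t'')\ge -M$; and if $t''\le s\le t$ then $\mathcal{F}(x,s)\le\mathcal{F}(x,t)$ directly. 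In all three cases $\mathcal{F}(x,s)\le\mathcal{F}(x,t)+2M$, and the case $t\le s\le 0$ is identical using the decreasing property on $(-\infty,-t'']$ and the same bound $M$ on $[-t'',0]$.

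The only genuinely delicate point is the absence of differentiability of $f$: one cannot legitimately compute $\partial_t\mathcal{F}=\tfrac1p\,t f_t-\tfrac{p-1}{p}f$ and argue about its sign. The integral-comparison argument above is precisely what circumvents this, reducing the whole monotonicity question to the one-variable monotonicity of $g$ supplied by \eqref{f9}; the remaining bookkeeping (the compactness bound $M$ and the case split) is routine.
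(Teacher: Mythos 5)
Your argument is correct. Note that the paper does not prove this lemma at all: it is imported verbatim from \cite[Lemma 2.3]{L2010} (where it is stated for $p=2$), so there is no in-paper proof to compare against; your computation is essentially the standard one from that reference, adapted to general $p$. The integral-comparison step is sound: for $t''\le s\le t$ the bound $\int_s^t g(\tau)\tau^{p-1}\,\d\tau\le g(t)\,(t^p-s^p)/p$ follows from the monotonicity of $g$ on $[t'',\infty)$ and $\tau^{p-1}>0$, and the cancellation giving $\mathcal{F}(x,t)-\mathcal{F}(x,s)\ge \frac{s^p}{p}\bigl[g(t)-g(s)\bigr]\ge 0$ is exactly right; likewise your case split with $C_5=2M$ for the ``in particular'' clause. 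One small caution: the lemma as stated assumes only \eqref{f9}, not the oddness condition \eqref{f3}, so your shortcut via $\mathcal{F}(x,-t)=\mathcal{F}(x,t)$ uses a hypothesis the lemma does not grant itself (even though it is available wherever the lemma is applied in this paper); the symmetric direct computation on $(-\infty,-t'']$, which you also sketch, is the clean route and does go through, since there $g(\tau)\le g(t_1)$ and $|\tau|^{p-2}\tau<0$ give the reversed bound on the integral. You are also right that avoiding differentiation of $\mathcal{F}$ is necessary here, as $f$ is merely continuous.
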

\begin{lem}\label{lmn3.5}
    Suppose that the function $f:\bar{\Omega}\times \mathbb{R}\rightarrow \mathbb{R}$ satisfies the conditions \eqref{f1}, \eqref{f2}, \eqref{f6}, and \eqref{f9}. Then, the energy functional $\mathcal{I}$ satisfies the Cerami condition $\mathrm{(Ce)}$ at any level $c\in \mathbb{R}$.
\end{lem}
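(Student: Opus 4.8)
The plan is to follow the same two-step strategy as in the proof of Lemma~\ref{lmn3.2}, replacing the role played there by the Jeanjean condition \eqref{f7} (through inequality \eqref{eq3.22}) with the monotonicity information for $\mathcal{F}$ supplied by Lemma~\ref{lmn3.4}. Let $(u_n)\subset X_p(\Omega)$ be a Cerami sequence at level $c$, so that \eqref{c2} holds together with $(1+[\eta_p(u_n)])\,\mathcal{I}'(u_n)\to 0$ in $X_p(\Omega)^*$; in particular $\langle \mathcal{I}'(u_n),u_n\rangle\to 0$, since $|\langle \mathcal{I}'(u_n),u_n\rangle|\leq (1+[\eta_p(u_n)])\,\|\mathcal{I}'(u_n)\|_{X_p(\Omega)^*}\to 0$. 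As in Step~1 of Lemma~\ref{lmn3.2}, I argue by contradiction, assuming $[\eta_p(u_n)]\to\infty$, and set $v_n:=u_n/[\eta_p(u_n)]$. Since $(v_n)$ is bounded, Lemma~\ref{Uniform convexity} and Proposition~\ref{compact and cont embedding} give, up to a subsequence, $v_n\rightharpoonup v$ in $X_p(\Omega)$, $v_n\to v$ in $L^r(\Omega)$ for $r\in[1,p^*_{s_\sharp})$, and $v_n\to v$ a.e., and I split into the cases $v=0$ and $v\neq 0$.

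The case $v\neq 0$ is handled exactly as in Case~II of Lemma~\ref{lmn3.2} and uses only \eqref{f6}: on $\Omega_1:=\{v\neq 0\}$ one has $|u_n|\to\infty$ a.e., so \eqref{f6} and Fatou's lemma force $\int_{\Omega_1}F(x,u_n)/[\eta_p(u_n)]^p\to\infty$, while the uniform lower bound $F(x,t)\geq -C_3$ obtained in \eqref{eq3.46} controls the integral over $\Omega\setminus\Omega_1$ as in \eqref{eq3.42}. Combined with \eqref{c2}, this contradicts $\mathcal{I}(u_n)/[\eta_p(u_n)]^p\to 0$.

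The case $v=0$ is where condition \eqref{f9} enters, via Lemma~\ref{lmn3.4}. Reproducing \eqref{eq3.29}--\eqref{eq3.35}, I choose $t_n\in[0,1]$ with $\mathcal{I}(t_nu_n)=\max_{t\in[0,1]}\mathcal{I}(tu_n)$, and testing with the rescalings $r_mv_n$ (where $r_m=(2pm)^{1/p}$) yields $\mathcal{I}(t_nu_n)\to\infty$, forcing $t_n\in(0,1)$ and hence $\langle \mathcal{I}'(t_nu_n),t_nu_n\rangle=0$. Now, instead of the multiplicative bound \eqref{eq3.22}, I invoke Lemma~\ref{lmn3.4}: because $t_n\in[0,1]$, the value $t_nu_n(x)$ lies between $0$ and $u_n(x)$ (same sign, no larger in modulus), so the additive estimate of Lemma~\ref{lmn3.4} gives the pointwise inequality $\mathcal{F}(x,t_nu_n(x))\leq \mathcal{F}(x,u_n(x))+C_5$. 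Therefore
\[
\mathcal{I}(t_nu_n)=\mathcal{I}(t_nu_n)-\tfrac{1}{p}\langle \mathcal{I}'(t_nu_n),t_nu_n\rangle=\int_\Omega \mathcal{F}(x,t_nu_n)\,\d x\leq \int_\Omega \mathcal{F}(x,u_n)\,\d x+C_5|\Omega|,
\]
and the right-hand side equals $\mathcal{I}(u_n)-\tfrac{1}{p}\langle \mathcal{I}'(u_n),u_n\rangle+C_5|\Omega|\to c+C_5|\Omega|$. This bounds $\limsup_n\mathcal{I}(t_nu_n)$, contradicting $\mathcal{I}(t_nu_n)\to\infty$. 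Hence $(u_n)$ is bounded in $X_p(\Omega)$.

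Once boundedness is established, I complete the proof exactly as in Claim~2 of Lemma~\ref{lmn3.1}: weak convergence of the nonlocal forms (Lemma~\ref{weak convergence}), convergence of the nonlinear terms through \eqref{f1}, \eqref{f2} and dominated convergence, and the Brezis--Lieb identity (Lemma~\ref{B-L lemma}) together with $[\eta_p(u_n)]\to[\eta_p(u)]$ yield $u_n\to u$ strongly in $X_p(\Omega)$. The main obstacle is the subcase $v=0$ of the boundedness step: everything hinges on replacing the multiplicative control $\mathcal{F}(x,t_nu_n)\leq \alpha\,\mathcal{F}(x,u_n)$ (available under \eqref{f7}) by the additive control $\mathcal{F}(x,t_nu_n)\leq \mathcal{F}(x,u_n)+C_5$ furnished by Lemma~\ref{lmn3.4}, which is precisely the point at which the monotonicity condition \eqref{f9} is used; note that the symmetry assumption \eqref{f3} plays no role here.
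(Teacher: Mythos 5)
Your proposal is correct and follows essentially the same route as the paper: reduce to the boundedness argument of Lemma~\ref{lmn3.2}, handle the case $v\neq 0$ verbatim via \eqref{f6}, and in the case $v=0$ replace the multiplicative bound \eqref{eq3.22} coming from \eqref{f7} by the additive bound $\mathcal{F}(x,t_nu_n(x))\leq \mathcal{F}(x,u_n(x))+C_5$ from Lemma~\ref{lmn3.4}, valid since $t_n\in[0,1]$ places $t_nu_n(x)$ between $0$ and $u_n(x)$. The paper writes this last step by splitting the integral over $\{u_n\geq 0\}$ and $\{u_n<0\}$, which is exactly your pointwise observation, so the two arguments coincide.
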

\begin{proof}
    This proof is also based on a similar argument in Lemma \ref{lmn3.2}, with the exception of the proof of inequality \eqref{eq3.37}. Here the condition \eqref{f7} is omitted; so, we will show the inequality \eqref{eq3.37} utilizing \eqref{f9} and Lemma \ref{lmn3.4}. In this proof, we will use the same notation as in Lemma \ref{lmn3.2}. Applying Lemma \ref{lmn3.4}, we obtain
\begin{align}\label{eq3.48}
         \lim_{n\rightarrow \infty} \mathcal{I}(t_nu_n)=&  \lim_{n\rightarrow \infty} \bigg[ \mathcal{I}(t_nu_n)-\frac{1}{p} \langle \mathcal{I}^\prime(t_nu_n),t_nu_n \rangle \bigg]\nonumber \\
    =&  \lim_{n\rightarrow \infty} \bigg[-\int_\Omega  F(x,t_n u_n(x)) \d x+\frac{1}{p}\int_\Omega f(x,t_n u_n(x))t_n u_n(x) \d x \bigg] \nonumber \\
    = & \lim_{n\rightarrow \infty} \bigg[ \int_\Omega \mathcal{F}(x,t_nu_n(x))\d x \bigg] \nonumber \\
    = & \lim_{n\rightarrow \infty} \bigg[ \int_{\{u_n\geq 0\}} \mathcal{F}(x,t_nu_n(x))\d x +\int_{\{u_n<0\}} \mathcal{F}(x,t_nu_n(x))\d x \bigg] \nonumber \\
    \leq & \lim_{n\rightarrow \infty} \bigg[ \int_{\{u_n\geq 0\}}\bigg( \mathcal{F}(x,u_n(x))+C_5\bigg) \d x +\int_{\{u_n<0\}} \bigg( \mathcal{F}(x,t_nu_n(x))+C_5 \bigg) \d x \bigg] \nonumber \\
    = & \lim_{n\rightarrow \infty} \bigg[\int_\Omega \mathcal{F}(x,u_n(x))\d x + C_5|\Omega| \bigg] \nonumber \\
    = & \lim_{n\rightarrow \infty} \bigg[ \mathcal{I}(u_n)-\frac{1}{p} \langle \mathcal{I}^\prime(u_n),u_n \rangle + C_5|\Omega| \bigg]
    = c+ C_5|\Omega| .
    \end{align}
    Therefore, \eqref{eq3.48} implies the inequality \eqref{eq3.37}. Hence $\mathcal{I}$ satisfies the Cerami condition. This completes the proof.
\end{proof}

%%%%%%%%%%%%%%%%%%%%%%%%%%%%%%%%%%%%%%%%%%%%%%%
\section{Fountain Theorem and A Significant result} \label{s4}
%%%%%%%%%%%%%%%%%%%%%%%%%%%%%%%%%%%%%%%%%%%%%%%
In this section, we will state the Fountain Theorem and establish an interesting lemma. We follow Bartsh's Fountain Theorem \cite{B1993} regarding the energy functional $\mathcal{I}$. We adopt the notation from \cite[Theorem 2.5]{B1993} ( also refer to \cite[Theorem 2.9]{L2010} and \cite[Theorem 3.6]{W1996}),  specifically for each $k\in \mathbb{N}$ and $1\leq q < p^*_{s_\sharp}$.
  Since $X_p(\Omega)$ is a separable and reflexive Banach space, then there exists a sequence $(v_m)\subset X_p(\Omega)$ and $(\phi_n)\subset {X_p(\Omega)}^*$ such that
 \begin{itemize}
      \item[$(1)$] $\langle \phi_n,v_m \rangle$ 
      $=\begin{cases}  
        1 \text{ if } m=n, \\
         0 \text{ if } m\neq n, 
     \end{cases}$
     \item[$(2)$] $\overline{\text{span}\{v_m:m\in\mathbb{N}\}}=X_p(\Omega)$ and $\overline{\text{span}\{\phi_n:n\in\mathbb{N}\}}={X_p(\Omega)}^*$.\\ 
     \noindent Let
     $X_j=\mathbb{R}v_j$, then $X_p(\Omega)=\overline{\bigoplus_{j\geq 1}X_j}$. Now we will define the spaces
     $$A_k=\bigoplus_{j=1}^k X_j,\,\,\, \text{and}\,\,\,\quad B_k=\overline{\bigoplus_{j\geq k}X_j}.$$
 \end{itemize}
 
Since $A_k$ is finite-dimensional, all norms on $A_k$ are equivalent. Therefore, for any $u\in A_k$, then there exist two positive constants $D_{k,q}$ and $D'_{k,q}$ such that
\begin{align}\label{eq4.1}
    D_{k,q} [\eta_p(u)]\leq ||u||_{L^q(\Omega)} \leq D'_{k,q} [\eta_p(u)].
    \end{align}
\textbf{Fountain Theorem:} Let $\mathcal{I}$ be a $C^1$ functional on an infinite-dimensional Banach space $X$, and
\begin{enumerate}    
\item[(i)] $\mathcal{I}$ holds Palais-Smale condition,   
\item [(ii)] $\mathcal{I}$ is even, and    
\item [(iii)] $\mathcal{I}$ satisfies the geometric assumption, which is defined as follows: for every $k\in \mathbb{N}$,
\begin{enumerate}    
\item [(a)] there exists $a'_k>0$ such that $y_k:=\max\left\{\mathcal{I}(u):u \in A_k, ~[\eta_n(u)] =a'_k\right\}\leq 0,$   
\item [(b)] there exists $b_k>0$ such that $z_k:=\inf\left\{\mathcal{I}(u):u \in B_k,~ [\eta_p(u)] =b_k\right\}\rightarrow \infty$ as $k\rightarrow \infty$, with $0<b_k <a'_k$,
\end{enumerate}
\end{enumerate}
then there exists an unbounded sequence of critical values $u_k$ such that $\mathcal{I}(u_n)\rightarrow \infty$.\\

Before going to the proof of our main results in the next section, first, we will prove a lemma as follows:
\begin{lem}\label{lmn4.1}
    Let $1\leq q < p^*_{s_\sharp}$, $k\in \mathbb{N}$ and 
    $$\beta_k:=\sup \left\{||u||_{L^q(\Omega)}: u\in B_k,~ [\eta_p(u)]=1 \right\},$$
    Then $\beta_k\rightarrow 0$ as $k\rightarrow \infty$.
\end{lem}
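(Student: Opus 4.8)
The plan is to argue by monotonicity together with a contradiction/weak-compactness argument, exploiting the biorthogonal system $(v_m),(\phi_n)$ and the compact embedding from Proposition \ref{compact and cont embedding}. First I would observe that $B_{k+1}\subset B_k$, so the supremum defining $\beta_k$ is taken over a shrinking family of sets; hence $(\beta_k)$ is nonincreasing and bounded below by $0$, and therefore converges to some limit $\beta\geq 0$. It thus suffices to prove $\beta=0$.

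To this end I would extract near-maximizers: by the definition of the supremum, for each $k$ there exists $u_k\in B_k$ with $[\eta_p(u_k)]=1$ and $\|u_k\|_{L^q(\Omega)}>\beta_k-\tfrac1k$. Since the norms $\eta_p$ and $\rho_p$ are equivalent (Lemma \ref{lmn2.8}), the sequence $(u_k)$ is bounded in $X_p(\Omega)$. Because $X_p(\Omega)$ is reflexive (Lemma \ref{Uniform convexity}), a subsequence, still denoted $(u_k)$, converges weakly, $u_k\rightharpoonup u$ in $X_p(\Omega)$, for some $u\in X_p(\Omega)$.

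The crucial step, which I expect to be the only genuinely delicate point, is to identify the weak limit as $u=0$. Here I would use that $u_k\in B_k=\overline{\bigoplus_{j\geq k}X_j}$ together with the biorthogonality relation $\langle\phi_m,v_j\rangle=\delta_{mj}$: for every fixed $m\in\mathbb{N}$ and every $k>m$ one has $\langle\phi_m,u_k\rangle=0$, since $u_k$ lies in the closed span of $\{v_j:j\geq k\}$ and $\phi_m$ annihilates each such $v_j$. Passing to the weak limit (with $\phi_m\in X_p(\Omega)^*$ fixed) yields $\langle\phi_m,u\rangle=0$ for all $m\in\mathbb{N}$. Since $\overline{\operatorname{span}\{\phi_n:n\in\mathbb{N}\}}=X_p(\Omega)^*$, this forces $u=0$.

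Finally I would invoke the compact embedding $X_p(\Omega)\hookrightarrow L^{q}(\Omega)$, valid for every $q\in[1,p^*_{s_\sharp})$ by Proposition \ref{compact and cont embedding}, to upgrade the weak convergence to strong convergence $u_k\to u=0$ in $L^q(\Omega)$; hence $\|u_k\|_{L^q(\Omega)}\to 0$. Combining this with the chain $\beta_k\geq\|u_k\|_{L^q(\Omega)}>\beta_k-\tfrac1k$ and the fact that $\beta_k\to\beta$, and letting $k\to\infty$, gives $\beta\leq 0$, whence $\beta=0$. As the full sequence $(\beta_k)$ is monotone and converges, this shows $\beta_k\to 0$, completing the proof.
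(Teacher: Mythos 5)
Your proposal is correct and follows essentially the same route as the paper: monotonicity of $\beta_k$, extraction of near-maximizers, weak compactness, identification of the weak limit as $0$, and the compact embedding of Proposition \ref{compact and cont embedding}. The only (cosmetic) difference is that you identify the weak limit via the biorthogonal functionals $\phi_m$ directly, whereas the paper argues that each $B_k$ is convex and closed, hence weakly closed, so the limit lies in $\bigcap_k B_k=\{0\}$ — a fact that itself rests on the same biorthogonality.
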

\begin{proof}
    From the Definition of $B_k$, we see $B_{k+1}\subset B_k$, then $0<\beta_{k+1}\leq \beta_k$ for any $k\in \mathbb{N}$. Hence, there exists $\beta\geq 0$ such that 
    \begin{align}\label{eq4.2}
        \beta_k \rightarrow \beta, \text{ as } k\rightarrow \infty.
    \end{align}
    Again, by Definition of $\beta_k$, for any $k\in \mathbb{N}$, there exists $u_k\in B_k$ such that
    \begin{align}\label{eq4.3}
        [\eta_p(u_k)]=1\text{  and  } ||u_k||_{L^q(\Omega)}>\frac{\beta_k}{2}.
    \end{align}
    Since $X_p(\Omega)$ is a uniform convex Banach space, it is reflexive. Then using \eqref{eq4.3} and Lemma \ref{weak convergence}, there exists a subsequence $u_k$ (same denoted by $u_k$) such that $u_k \rightharpoonup u \in X_p(\Omega)$ and for any $v\in X_p(\Omega)$, we get
\begin{align}\label{eq4.4}
    \lim _{k \rightarrow\infty} &\int_{[0,1]}\left(\iint_{\mathbb{R}^{2 N}} \frac{\left|u_{k}(x)-u_{k}(y)\right|^{p-2}\left(u_{k}(x)-u_{k}(y)\right)(v(x)-v(y))}{|x-y|^{N+s p}} \d x \d y\right) \d \mu^{ \pm}(s)\nonumber \\
&=\int_{[0,1]}\left(\iint_{\mathbb{R}^{2 N}} \frac{|u(x)-u(y)|^{p-2}(u(x)-u(y))(v(x)-v(y))}{|x-y|^{N+s p}} \d x \d y\right) \d \mu^{ \pm}(s).
\end{align}
Since $B_k$ is convex and closed, thus it is closed for the weak topology. Hence
$$u\in \cap_{k=1}^\infty B_k=\{0\}.$$
Therefore, by Sobolev embedding proposition \ref{compact and cont embedding}, as $k\rightarrow \infty$, we obtain
\begin{align}\label{eq4.5}
    u_k\rightarrow 0 \text{ in } L^q(\Omega).
\end{align}
Using \eqref{eq4.2}, \eqref{eq4.3} and \eqref{eq4.5}, we conclude that $\beta_k \rightarrow 0$ as $k\rightarrow \infty$. This completes the proof.  
\end{proof}

%%%%%%%%%%%%%%%%%%%%%%%%%%%%%%%%%%%%%%%%%%%%%%%
\section{Proof of main results} \label{s5}
%%%%%%%%%%%%%%%%%%%%%%%%%%%%%%%%%%%%%%%%%%%%%%%
In this section, we present the proof of the main results. We begin with the proof of Theorem \ref{t1}
\begin{proof}[Proof of Theorem \ref{t1}] \label{proof of t1}
    We follow the proof given in \cite[Theorem 3.7]{W1996}. It is established that $\mathcal{I}$ fulfils the Palais-Smale condition as shown by Lemma \ref{lmn3.1}. Moreover, using \eqref{f3}, we obtain $\mathcal{I}(-u)=\mathcal{I}(u)$ for $u\in X_p(\Omega)$. To apply the Fountain theorem, it is necessary to satisfy only the geometric conditions of the energy functional $\mathcal{I}$. We will prove this in two claims.\\
    \textbf{Claim I:} For each $k\in \mathbb{N}$, there exists $a'_k>0$ such that $$y_k=\max\left\{\mathcal{I}(u):u \in A_k, ~[\eta_p(u)]=a'_k\right\}\leq 0.$$
For $u\in A_k$, using \eqref{f5} and \eqref{eq4.1}, we obtain
\begin{align}\label{eq5.1}
    \mathcal{I}(u)\leq & \frac{1}{p} [\eta_p(u)]^p -a_3||u||^\mu_{L^\mu(\Omega)}+a_4|\Omega| \nonumber \\
    \leq & \frac{1}{p} [\eta_p(u)]^p - D''_{k,\mu}[\eta_p(u)]^\mu+a_4|\Omega|, \text{ for some suitable positive constant } D''_{k,\mu}.
\end{align}
Since, $\mu > p$.  Utilizing \eqref{eq5.1}, for any $u\in A_k$ and a sufficiently large $a'_k>0$ such that $[\eta_p(u)]=a'_k$, we conclude that 
$$\mathcal{I}(u)\leq 0.$$

\textbf{Claim II:}  For each $k\in \mathbb{N}$, there exists $b_k>0$ such that $$z_k=\inf\left\{\mathcal{I}(u):u \in B_k,~ [\eta_p(u)]=b_k\right\}\rightarrow \infty \text{ as } k\rightarrow \infty.$$
    From \eqref{eq3.5}, there exists a constant $C>0$ such that
    \begin{align}\label{eq5.2}
        |F(x,t)|\leq C(1+|t|^q),~\forall~ x\in \bar{\Omega} ~\text{and}~\forall ~x\in\mathbb{R}.
    \end{align}
    Thus, using \eqref{eq5.2} and for any $u\in B_k\setminus \{0\}$, we get
    \begin{align}\label{eq5.3}
        \mathcal{I}(u)\geq & \frac{1}{p} [\eta_p(u)]^p -C||u||^q_{L^q(\Omega)}-C|\Omega| \nonumber \\
        = & \frac{1}{p} [\eta_p(u)]^p -C\left|\left|\frac{u}{[\eta_p(u)]}\right|\right|^q_{L^q(\Omega)}[\eta_p(u)]^q-C|\Omega| \nonumber \\
        \geq & \frac{1}{p} [\eta_p(u)]^p -C \beta_k^q[\eta_p(u)]^q-C|\Omega| \nonumber \\
        = & [\eta_p(u)]^p \bigg( \frac{1}{p}- C \beta_k^q[\eta_p(u)]^{q-p} \bigg)-C|\Omega|, ~\beta_k\text{ is defined in Lemma \ref{lmn4.1}}.
    \end{align}
    Let us select $ b_k = (qC\beta^q_k)^{-\frac{1}{q-p}} $; consequently, $b_k \rightarrow \infty $ as $ k \rightarrow \infty $ due to Lemma \ref{lmn4.1} and the condition $ q > p $. Therefore, using \eqref{eq5.3} and for each $u\in B_k$ with $[\eta_p(u)]=b_k$, we obtain
    \begin{align}\nonumber
         \mathcal{I}(u)\geq b_k^p\bigg( \frac{1}{p}-\frac{1}{q} \bigg) -C|\Omega|\rightarrow \infty \text{ as } k\rightarrow \infty.
    \end{align}
    This conclude the proof of this result.
\end{proof}
\begin{rem}
    We observe that we used only the Ambrosetti-Rabinowitz (AR) condition \eqref{f4} (actually we use \eqref{f5}) for showing the \textbf{Claim I} in the proof of geometric conditions of the energy functional $\mathcal{I}$. Moreover, the proof of \textbf{Claim II} in geometric conditions of the energy functional $\mathcal{I}$ uses \eqref{f2} and the Sobolev embedding proposition \ref{compact and cont embedding}.
\end{rem}

Now, we proceed to prove our next main result, that is, Theorem \ref{t2}.
\begin{proof}[Proof of Theorem \ref{t2}] 
    Since $\mathcal{I}$ satisfies the Cerami condition (Lemma \ref{lmn3.2}), it follows that $\mathcal{I}$ also fulfills the Palais-Smale condition. Additionally, using \eqref{f3}, we obtain $\mathcal{I}(-u)=\mathcal{I}(u)$ for $u\in X_p(\Omega)$, and the verification of the geometric condition $(b)$ for the energy functional $\mathcal{I}$ is shown in \textbf{Claim II} of Theorem \ref{t1}. To apply the Fountain theorem, we will verify only the geometric condition $(a)$ for the energy functional $\mathcal{I}$. To show this, we will utilize the linear subspace $A_k$ and \eqref{f6}.\\
    \textbf{Claim I:} Using \eqref{f6}, for each $k\in \mathbb{N}$, there exists $r_k>0$ such that 
    \begin{align}\label{eq6.1}
        F(x,t)\geq \frac{1}{D_{k,p}^p}|t|^p, ~\forall ~x\in\bar{\Omega}, ~\forall~t\in \mathbb{R} \text{ and } \forall~ |t|>r_k,
    \end{align}
    where $D_{k,p}$ is the positive constant given in \eqref{eq4.1} when $q=p$. Moreover, by continuity of $F$, we obtain
    \begin{align}\label{eq6.2}
        F(x,t) \geq m_k:= \min_{x\in \bar{\Omega},|t|\leq r_k} F(x,t).
    \end{align}
    Since, $F(x,0)=0$ for each $x\in \bar{\Omega}$, then $m_k\leq 0$. Using \eqref{eq6.1} and \eqref{eq6.2}, we get
\begin{align}\label{eq6.3}
        F(x,t)\geq \frac{1}{D_{k,p}^p}|t|^p-Z_k, ~\forall ~x\in\bar{\Omega}, ~\forall~t\in \mathbb{R},
    \end{align}
    where for some positive constant $Z_k$ (say, $Z_k \geq \frac{r_k^p}{D_{k,p}^p}-m_k$). Using \eqref{eq4.1}, \eqref{eq6.3} and for any $u\in A_k$, we conclude
    \begin{align}\label{eq6.4}
         \mathcal{I}(u)\leq & \frac{1}{p} [\eta_p(u)]^p - \frac{1}{D_{k,p}^p}||u||_{L^p(\Omega)}^p+Z_k|\Omega| \nonumber \\ 
         \leq & \frac{1}{p} [\eta_p(u)]^p - [\eta_p(u)]^p + Z_k|\Omega| \nonumber \\ 
         = & -\frac{(p-1)}{p}[\eta_p(u)]^p+ Z_k|\Omega|  .
    \end{align}
    Consequently, for a sufficiently large $a_k$ such that $[\eta_p(u)]=a_k$, it follows from \eqref{eq6.4} that $\mathcal{I}(u)\leq 0$. This concludes the proof of the theorem.
\end{proof}

Finally, we discuss the proof of Theorem \ref{t3}.
\begin{proof}[Proof of Theorem \ref{t3}]
   By Lemma~\ref{lmn3.5}, the functional $\mathcal{I}$ satisfies the Cerami condition, and consequently, it also fulfills the Palais--Smale condition. Moreover, for any $u \in X_p(\Omega)$, we have $
\mathcal{I}(-u) = \mathcal{I}(u),$
as a direct consequence of the symmetry assumption \eqref{f3}.  

Regarding the geometric conditions $(a)$ and $(b)$ for the energy functional $\mathcal{I}$, these follow from \textbf{Claim II} in the proof of Theorem~\ref{t1} and \textbf{Claim I} in the proof of the previous theorem. This completes the proof.

\end{proof}

\section*{Conflict of interest statement}
On behalf of all authors, the corresponding author states that there is no conflict of interest.

\section*{Data availability statement}
Data sharing is not applicable to this article as no datasets were generated or analysed during the current study.

\section*{Acknowledgement}

SB would like to thank the Council of Scientific and Industrial Research (CSIR), Govt. of India for the financial assistance to carry out this research work [grant no. 09/0874(17164)/ 2023-EMR-I]. SG acknowledges the research facilities available at the Department of Mathematics, NIT Calicut. VK is supported by the FWO Odysseus 1 grant G.0H94.18N: Analysis and Partial Differential Equations and the Methusalem program of the Ghent University Special Research Fund (BOF) (Grant number 01M01021). VK is also supported by FWO Senior Research Grant G011522N.

%\bibliographystyle{plain}
%\bibliography{svik}

\end{document}